\theoremstyle{plain}
\newtheorem{theorem}{Theorem}[section]
\newtheorem{lemma}[theorem]{Lemma}
\newtheorem{corollary}[theorem]{Corollary}
\newtheorem{proposition}{Proposition}[section]
\theoremstyle{definition}
\newtheorem{definition}[theorem]{Definition}
\newtheorem{assumption}{Assumption}[section]
\theoremstyle{remark}
\newtheorem{remark}[theorem]{Remark}
\numberwithin{equation}{section}
\renewcommand{\epsilon}{\varepsilon}
\renewcommand{\phi}{\varphi}
\newcommand{\dd}{\mathrm{d}}
\newcommand{\E}{\mathbb E}
\renewcommand{\L}{\mathbb L}
\newcommand{\N}{\mathbb N}
\renewcommand{\P}{\mathbb P}
\newcommand{\R}{\mathbb R}
\newcommand{\Z}{\mathbb Z}
\newcommand{\Bc}{\mathcal B}
\newcommand{\Dc}{\mathcal D}
\newcommand{\Fc}{\mathcal F}
\newcommand{\Ic}{\mathcal I}
\newcommand{\Nc}{\mathcal N}
\newcommand{\Qc}{\mathcal Q}
\newcommand{\Rc}{\mathcal R}
\newcommand{\Xc}{\mathcal X}
\newcommand{\Zc}{\mathcal Z}
\newcommand{\As}{\mathscr A}
\newcommand{\Xs}{\mathscr X}
\newcommand{\Zs}{\mathscr Z}
\newcommand{\llbr}{\llbracket}
\newcommand{\rrbr}{\rrbracket}
\newcommand{\argmin}{\operatornamewithlimits{argmin}}
\newcommand{\KL}{\mathrm{KL}}
\newcommand{\TV}{\mathrm{TV}}
\newcommand{\supp}{\mathrm{supp}}
\newcommand{\defeq}{\vcentcolon=}
\newcommand{\eqdef}{=\vcentcolon}
\newcommand{\1}{\mathds 1}
\newcommand{\pprime}{\prime\prime}
\newcommand{\fhat}{\widehat f}
\newcommand{\ghat}{\widehat g}
\newcommand{\Bhat}{\widehat B}
\newcommand{\Shat}{\widehat S}
\newcommand{\Bprime}{B^\prime}
\newcommand{\Sprime}{S^\prime}
\newcommand{\sprime}{s^\prime}
\newcommand{\ftilde}{\widetilde f}
\newcommand{\thetatilde}{\widetilde \theta}
\newcommand{\thetacheck}{\check \theta}
\newcommand{\alphahat}{\widehat \alpha}
\newcommand{\alphaprime}{\alpha^\prime}
\newcommand{\betahat}{\widehat \beta}
\newcommand{\betatilde}{\widetilde \beta}
\newcommand{\betaprime}{\beta^\prime}
\newcommand{\Besov}[3]{\Bc^{#1}_{#2#3}}
\newcommand{\DBesov}[3]{\Dc^{#1}_{#2#3}}
\newcommand{\uppreslev}{j_1} \newcommand{\lowreslev}{j_0} \newcommand{\fzero}{f_0}
\newcommand{\threshold}{t_{j,n,\privpar}} 
\newcommand{\privpar}{\alpha} 
\newcommand{\dhamming}{d_H}
\newcommand{\sigmatilde}{\widetilde \sigma}
\renewcommand{\leqslant}{\leq}
\renewcommand{\geqslant}{\geq}
\newcommand{\fhatlin}{\fhat_{\mathrm{lin}}}
\newcommand{\festnl}{\widetilde f_n}
\newcommand{\riskp}{\Rc^{\ast}_{n, \privpar}}
\newcommand{\risk}{\Rc_n}
\newcommand{\ratep}{\mathfrak r^{\ast}_{n, \privpar}}    \newcommand{\rate}{\mathfrak r_n}    
\title[Density estimation under local differential privacy]{Local differential privacy: Elbow effect in optimal density estimation and adaptation over Besov ellipsoids}
\date{\today}
\author[C. Butucea]{Cristina Butucea}
\address{Cristina Butucea, CREST, ENSAE, Institut Polytechnique de Paris, 5 avenue Henry Le Chatelier, F-91120 Palaiseau}
\email{cristina.butucea@ensae.fr}
\author[A. Dubois]{Amandine Dubois}
\address{Amandine Dubois, CREST-ENSAI, Campus de Ker-Lann - Rue Blaise Pascal - BP 37203 - 35172 BRUZ cedex}
\email{amandine.dubois@ensai.fr}
\author[M. Kroll]{Martin Kroll}
\address{Martin Kroll, CREST, ENSAE, Institut Polytechnique de Paris, 5 avenue Henry Le Chatelier, F-91120 Palaiseau}
\email{martin.kroll@ensae.fr}
\author[A. Saumard]{Adrien Saumard}
\address{Adrien Saumard, CREST-ENSAI, Campus de Ker-Lann - Rue Blaise Pascal - BP 37203 - 35172 BRUZ cedex}
\email{adrien.saumard@ensai.fr}
\subjclass[2010]{62G07 (primary), and 62G20 (secondary)}
\keywords{Density estimation, Besov classes of functions, Local differential privacy, Lower bounds, Minimax rates, Adaptive estimation, Wavelet thresholding}
\begin{document}

\begin{abstract}
We address the problem of non-parametric density estimation under the additional constraint that only privatised data are allowed to be published and available for inference.
For this purpose, we adopt a recent generalisation of classical minimax theory to the framework of local $\privpar$-differential privacy and provide a lower bound on the rate of convergence over Besov spaces $\Besov{s}{p}{q}$ under mean integrated $\L^r$-risk.
This lower bound is deteriorated compared to the standard setup without privacy, and reveals a twofold elbow effect.
In order to fulfil the privacy requirement, we suggest adding suitably scaled Laplace noise to empirical wavelet coefficients.
Upper bounds within (at most) a logarithmic factor are derived under the assumption that $\alpha$ stays bounded as $n$ increases:
A linear but non-adaptive wavelet estimator is shown to attain the lower bound whenever $p \geq r$ but provides a slower rate of convergence otherwise.
An adaptive non-linear wavelet estimator with appropriately chosen smoothing parameters and thresholding is shown to attain the lower bound within a logarithmic factor for all cases.
\end{abstract} 
\maketitle

\section{Introduction}

\subsection*{Problem statement}

In the modern information age, increasingly more institutions are collecting and storing data.
Provided that a certain amount of privacy is guaranteed, some of these institutions might be willing to provide access to selected data sets.
Examples of such data may include information about participants in a medical study, clients of a web service, or persons interviewed in a scientific survey.
In this framework, the following questions arise naturally:
How can data be sufficiently anonymised, given a rigorous definition of privacy, and what are the consequences for subsequent data analyses resulting from the chosen anonymisation procedure?
The answer to these questions depends on several interacting parameters, namely the privacy definition at hand, the potential extent of collaboration of the involved data holding entities, and the kind of data mining tasks that should be feasible based on the private data.

In this paper, we consider the problem of non-parametric density estimation under local differential privacy as a special instance of the general problem sketched in the previous paragraph:
For $i=1,\ldots,n$, the $i$-th data holder observes a real-valued random variable $X_i$ distributed according to a probability density function $f$.
The aim is that every data holder releases an anonymised view $Z_i$ of $X_i$ such that the privacy notion of \emph{local differential privacy}, that is introduced next, is satisfied and that the density $f$ can be estimated from the data $Z_1,\ldots,Z_n$ in an optimal way.

\subsection*{Local differential private estimation}
The notion of \emph{local differential privacy} aggregates two different concepts, namely \emph{local} privacy and \emph{differential} privacy, that we explain in the sequel.

The qualitative notion of \emph{local} privacy characterises how the different entities holding the data $X_1,\ldots,X_n$ might interact to generate a private release $Z$.
It is opposed to the concept of \emph{global} privacy where the respective data holders share confidence in a common curator who has access to the ensemble of non-masked data $X_1,\ldots,X_n$ and generates the releasable data from this complete information.
In the \emph{local} setup, such an authority that is trusted by all the parties, does not exist.
However, some amount of interaction between the different parties is still allowed.
The releasable data $Z_1,\ldots,Z_n$ are obtained by successively applying suitable Markov kernels.
Given $X_i=x_i$ and $Z_1=z_1,\ldots,Z_{i-1}=z_{i-1}$, the $i$-th dataholder draws
\begin{equation*}
    Z_i \sim Q_i(\cdot \mid X_i=x_i, Z_1=z_1,\ldots,Z_{i-1}=z_{i-1})
\end{equation*}
for some Markov kernel $Q_i: \Zs \times \Xc \times \Zc^{i-1} \to [0,1]$ where the measure spaces of the non-private and private data are denoted with $(\Xc,\Xs)$ and $(\Zc,\Zs)$, respectively.
An important special case is that of \emph{non-interactive} local privacy where the random value of $Z_i$ depends on $X_i$ only and must not depend on preceding values of $Z$.
More precisely, in the non-interactive case we have
\[ Z_i \sim Q(\cdot \mid X_i = x_i) \] 
for some Markov kernel $Q$ that does no longer depend on the index $i$.
The non-interactive scenario seems to be more attractive in practice since no communication between the data holders is assumed and it is balanced in the sense that no participant obtains any information about any other participant's data.
From a mathematical point of view, however, allowing also non-interactive procedures does not lead to more technical proofs.
Thus, we potentially allow non-interactive methods in our minimax analysis, although the anonymisation techniques proposed in this paper are exclusively non-interactive.
Let us mention that for some tasks, however, interactive mechanisms provide natural and attractive alternatives (for instance, for private estimation in generalized linear models; see \cite{duchi2018minimax}, Section~5.2.1).

The notion of \emph{differential} privacy is a quantitative one and introduces a condition that makes the problem at hand mathematically tractable.
We provide its definition for the locally private case only and refer the reader to \citep{wasserman2010statistical} for a definition in the global case.

\begin{definition}A sequence of Markov kernels $Q_i: \Zs \times \Xc \times \Zc^{i-1} \to [0,1]$ provides $\privpar$-differential privacy if
\begin{equation*}
    \sup_{A \in \Zs} \frac{Q_i(A \mid X_i=x,Z_1=z_1,\ldots,Z_{i-1}=z_{i-1})}{Q_i(A \mid X_i=x^\prime,Z_1=z_1,\ldots,Z_{i-1}=z_{i-1})} \leq \exp(\privpar) \quad \text{for all }x, x^\prime \in \Xc.
\end{equation*}
In the non-interactive case, this condition is replaced with
\begin{equation*}
    \sup_{A \in \Zs} \frac{Q(A \mid X_i=x)}{Q(A \mid X_i=x^\prime)} \leq \exp(\privpar) \quad \text{for all }x, x^\prime \in \Xc.
\end{equation*}
We denote with $\Qc_\privpar$ the set of all local $\privpar$-differential private Markov kernels.
\end{definition}
Thus, the parameter $\privpar$ quantifies the amount of privacy that is guaranteed:
Setting $\privpar = 0$ ensures perfect privacy whereas letting $\privpar$ tend to infinity softens the privacy restriction.
In the non-interactive case, let us suppose that the Markov kernel $Q: \Zs \times \Xc \to [0,1]$ has a density $q$ with respect to some dominating measure.
Then, the defining property of $\privpar$-differential privacy is equivalent to
\begin{equation}\label{EQ:DEF:DIFF:PRIV:DENSITY}
    \sup_{z \in \Zc} \frac{q(z \mid X_i=x)}{q(z \mid X_i = x^\prime)} \leq \exp(\privpar) \quad \text{ for all }x,x^\prime \in \Xc.
\end{equation}

A consequence from the definition of $\privpar$-differential privacy is \emph{plausible deniability} of the data in the following sense:
Given the private view $Z_i$ only, the power of any test of the null hypothesis $H_0: X_i = x$ against the alternative $H_1: X_i = x^\prime$ with prescribed first error probability $\gamma$ has power bounded from above by $\gamma \exp(\alpha)$ (see \citep{wasserman2010statistical}, Theorem~2.4).

\subsection*{Rate optimal density estimation over Besov ellipsoids}

Let us briefly review some well-known results on non-parametric density estimation in the \emph{non-private} setup where $X_1,\ldots,X_n$ can be observed.
This classical model provides a natural benchmark for the model where additional privacy restrictions are imposed, and having in mind the results for this benchmark model turns out to be useful for understanding the ones for the model with privacy.

Density estimation from a sample $X_1,\ldots,X_n$ of observations is one of the paradigmatic problems in non-parametric statistics.
A popular framework is that of minimax optimal estimation:
Given a loss function $\ell$ (that is, a function mapping a pair of density functions $(f,g)$ to some non-negative real number) and any class $\Fc$ of candidate density functions, the quantity of interest is the minimax risk
\begin{equation}\label{EQ:DEF:MINIMAX:RISK}
    \risk(\ell,\Fc) = \inf_{\ftilde} \sup_{f \in \Fc} \E [\ell(\ftilde, f)]
\end{equation}
where the infimum is taken over all estimators (that is, $\sigma(X_1,\ldots,X_n)$-measurable functions).
In this setup, an estimator $\fhat$ is called \emph{rate optimal} if
\begin{equation*}
    \sup_{f \in \Fc} \E [\ell(\fhat, f)] \leq C(\ell,\Fc) \risk(\ell,\Fc).
\end{equation*}
Several function classes, loss functions and types of estimators have been intensively studied for the density estimation problem (see \citep{tsybakov2009introduction} and \citep{ginenickl} for comprehensive overviews of the topic). 
Throughout this paper, we consider the integrated risk associated to  $\L^r$-loss defined by $\ell(f,g) = \lVert f-g\rVert_r^r$ for $r \geq 1$.
For the Besov spaces to be considered in the sequel, wavelet methods have turned out particularly convenient.
Given a father wavelet $\phi$ and a mother wavelet $\psi$ associated to it, verifying some sufficient conditions (see conditions (5.10)--(5.12) in \citep{haerdle1998wavelets}), and an integer $\lowreslev \in \Z$, a wavelet basis of $\L^2(\R)$ is given by
\begin{equation}\label{EQ:WAVELET:BASIS}
    \{ \phi_{\lowreslev k} = 2^{\lowreslev/2} \phi(2^{\lowreslev} (\cdot) -k) \colon k \in \Z \} \cup \{ \psi_{jk} = 2^{j/2}\psi(2^j (\cdot) - k) \colon j \geq \lowreslev, k \in \Z \}.
\end{equation}
Given such a basis, the probability density $f$ admits the following formal expansion (in $\mathbb{L}^2$ sense):
\begin{equation}\label{EQ:WAV:EXP}
    f = \sum_{k \in \Z} \alpha_{j_0k} \phi_{\lowreslev k} + \sum_{j \geq \lowreslev} \sum_{k \in \Z} \beta_{jk} \psi_{jk} 
\end{equation}
where the wavelet coefficients are defined as
\begin{equation*}
    \alpha_{j_0k} = \int_\R f(x) \phi_{\lowreslev k} (x)\dd x \qquad \text{and} \qquad \beta_{jk} = \int_\R f(x) \psi_{jk}(x) \dd x.
\end{equation*}
An attractive property of wavelet expansions as \eqref{EQ:WAV:EXP} is that the membership of Besov spaces can be characterised in terms of its wavelet coefficients with respect to a well chosen wavelet basis.
In the sequel, we will work under the following assumption on the father wavelet $\phi$.

\begin{assumption}\label{COND:wavelets}
    Following \citep{haerdle1998wavelets}, we assume that the father wavelet function $\varphi$ generates a multiresolution analysis of $\mathbb{L}^2(\mathbb{R})$, that it is $N+1$ times weakly differentiable for some integer $N$ such that $0<s<N+1$, and that its derivative satisfies $\sup_x \sum_k \lvert \varphi^{(N+1)}(x-k)\rvert<\infty$ a.e.
    Moreover, we assume that there exists a bounded, non-increasing function $\Phi$ on $\mathbb{R}_+$ such that $\lvert \varphi(u)\rvert \leq \Phi(\lvert u\rvert)$ and that both $\int \Phi(\lvert u\rvert) \dd u <\infty$ and $ \int \Phi(\lvert u\rvert) \lvert u
    \rvert^N \dd u < \infty$.
\end{assumption}

If the father wavelet function $\varphi$ verifies Assumption~\ref{COND:wavelets} then, given parameters $s > 0$ and $1 \leq p,q \leq \infty$, the fact that $f$ belongs to the Besov space $\Besov{s}{p}{q}$ is equivalent to $J_{spq}(f)<\infty$ where
\begin{equation*}
    J_{spq} \defeq \lVert \alpha_{0\cdot} \rVert_{p} + \bigg( \sum_{j \geq 0} ( 2^{j(s+ 1/2 - 1/p)} \lVert \beta_{j\cdot} \rVert_p)^q \bigg)^{1/q}
\end{equation*}
for $1\leq q < \infty$ and the usual modification if $q=\infty$.
Fixing such a wavelet basis, we consider Besov ellipsoids defined as
\begin{equation*}
  \Besov{s}{p}{q}(L) = \{ f\colon \mathbb{R} \rightarrow \mathbb{R} : J_{spq}(f) \leq L \}.
\end{equation*}
Since our interest is in density estimation, a quite natural class to consider is
\begin{equation*}
    \DBesov{s}{p}{q} = \DBesov{s}{p}{q}(L,T) = \{ f : f \in \Besov{s}{p}{q}(L), f \geq 0, \int_\R f(x)\dd x = 1 \text{ and } \supp(f) \subseteq [-T,T] \},
\end{equation*}
where $\supp(f)$ denotes the support of the function $f$.
Note that we consider here the Besov smoothness of $f$ as a function defined on the whole real line, or, equivalently, that $f$ belongs to a periodic Besov class.
It would equally be possible to define Besov smoothness over the support $[-T,T]$.
Then the wavelet basis has to be boundary corrected so that it detects the smoothness on this interval only and not the potential lack of smoothness of $f$ at its boundary.
We refer the reader to \citep{ginenickl} for boundary corrected wavelets, that also dispose of all the properties that we need in the sequel.

It is well-known \citep{ginenickl,haerdle1998wavelets,donoho1996density} that
\begin{equation}\label{EQ:RATES:NONPRIVATE}
\risk(\lVert \cdot \rVert_r^r,\DBesov{s}{p}{q}) \gtrsim \rate, \text{ where }\rate =  \begin{cases} n^{-\frac{rs}{2s+1}}, & \text{ if } p > \frac{r}{2s+1} , \\ (n/\log n)^{-\frac{r(s-1/p+1/r)}{2(s-1/p)+1}}, & \text{ if } p \leq \frac{r}{2s+1},\end{cases}
\end{equation}
and these rates are optimal or suboptimal by a logarithmic factor only (see \citep{haerdle1998wavelets} for an extensive discussion).
The structural change of the rate between \emph{dense zone} (where $ p > r/(2s+1)$) and \emph{sparse zone} (where $p \leq r/(2s+1)$) is sometimes called an \emph{elbow effect}.

Moreover, in the dense case, we can distinguish the \emph{homogeneous} zone when $p \geq r$ and the \emph{non-homogeneous} zone where $r/(2s+1)<p<r$.
In the homogeneous case, \emph{linear} wavelet estimators of the form
\[ \sum_{k \in \Z} \alphaprime_{\lowreslev k} \phi_{\lowreslev k} (x) + \sum_{j = \lowreslev}^{\uppreslev} \sum_{k \in Z} \betaprime_{jk} \psi_{jk}(x) \]
with $\alphaprime_{\lowreslev k} = \frac{1}{n} \sum_{i=1}^n \phi_{\lowreslev k}(X_i)$, $\betaprime_{jk} = \frac{1}{n} \sum_{i=1}^n \psi_{j k}(X_i)$, and appropriately chosen $\lowreslev, \uppreslev$ are rate optimal whereas linear procedures are necessarily sub-optimal in the non-homogeneous case (see \citep{haerdle1998wavelets} and references therein).
In this latter scenario as well as in the sparse case, non-linear estimators based on wavelet thresholding turn out to be optimal at least up to logarithmic factors.

\subsection*{Minimax framework under privacy constraints}
Let us now describe how to extend the classical minimax setup in order to encompass the framework of local differential privacy.
Since not only the estimation procedure but also the Markov kernels guaranteeing local $\privpar$-differential privacy can freely be chosen, it is natural to replace~\eqref{EQ:DEF:MINIMAX:RISK} with the local $\privpar$-differential minimax risk defined as
\begin{equation*}
    \riskp(\ell, \Fc) = \inf_{\substack{\ftilde \\ Q \in \Qc_\privpar}} \sup_{f \in \Fc} \E_{f,Q} [\ell(f,\ftilde)].
\end{equation*}
Here the infimum is taken both over all $(\Zc,\Zs)$-measurable estimators of $f$ and all Markov kernels guaranteeing local $\privpar$-differential privacy.
A tuple $(\widehat Q, \fhat)$ consisting of a privacy mechanism and an estimator $\fhat$ is rate optimal (with respect to the local $\privpar$-differential private risk) if
\begin{equation*}
    \sup_{f \in \Fc} \E_{f,\widehat Q} \, [\ell(f,\fhat)] \leq C(\ell, \Fc) \riskp(\ell, \Fc).
\end{equation*}
The quantity $\riskp(\ell, \Fc)$ as well as the construction of optimal privacy mechanism and estimators represent the principal interest of the rest of the paper.
 
\subsection*{Related work}
Research on statistical estimation under privacy constraints is rather recent.
A landmark paper is \citep{wasserman2010statistical} where research on the subject has been initiated and density estimation via histograms and orthogonal series in the global privacy setup have been discussed.
In the same global framework, the article \citep{hall2013differential} considers anonymization of functional data and discusses kernel density estimators as the main example.
Local $\privpar$-differential privacy was intensively studied in \citep{duchi2013local} and the companion article \citep{duchi2018minimax}.
In \citep{duchi2013local} the authors show that the well-known technique of randomized response from survey statistics can be interpreted under the umbrella of local $\privpar$-differential privacy.
In the context of density estimation, \citep{duchi2013local} established minimax rates of convergence for the mean integrated squared error over Sobolev classes with arbitrary smoothness parameter $\beta \geq 1$.
They establish the minimax rate of order $n^{-\beta/(\beta +1)}$ for the mean integrated squared error over Sobolev classes with $\beta = 1$ and show that this optimal rate can be attained by Laplace perturbation of empirical histogram coefficients.
The papers \citep{duchi2013local,duchi2018minimax} provide also results for Sobolev classes with higher degrees of smoothness ($\beta > 1$) but in this case a mere perturbation of the empirical Fourier coefficients does not lead to a rate optimal method (see \citep{duchi2013local}, Observation~1 for the non-optimality of this approach).
By means of a more sophisticated sampling technique (see \citep{duchi2013local}, p.~11 or \cite{duchi2018minimax}, Section~5.2.2), however, the authors derive the minimax rate of convergence that is $(n\privpar^2)^{-\beta/(\beta + 1)}$ also in the general case.
Furthermore, \citep{duchi2013local} provides private versions of classical information-theoretical bounds that allow to apply standard lower bound techniques also in the private setup.
In~\citep{rohde2018geometrizing}, the estimation of linear functionals in the framework of local privacy is considered and a characterisation of the rates of convergence in terms of moduli of continuity is obtained which is in parallel to well-known results for the non-private setup~\citep{donoho1991geometrizing}.
This general analysis contains the private estimation of a probability density at a fixed point under mean squared error as a special case.

\bigskip

\subsection*{Main results}
In Section~\ref{SEC:LOWER}, in addition and in formal analogy to~\eqref{EQ:RATES:NONPRIVATE}, we derive, under similar technical assumptions, the following lower bound on the private minimax risk:
\begin{equation}\label{EQ:LOWER:PRIVATE}
\riskp(\lVert \cdot \rVert_r^r, \DBesov{s}{p}{q}) \gtrsim \ratep, \text{ where } \ratep = \begin{cases} (n (e^\privpar - 1)^2)^{-\frac{rs}{2s+2}}, & \text{if } p > \frac{r}{s+1}, \\ \left( \frac{n (e^\privpar - 1)^2}{\log (n (e^\privpar - 1)^2)} \right)^{-\frac{r(s-1/p+1/r)}{2(s-1/p)+2}}, & \text{if } p \leq \frac{r}{s+1}.\end{cases}
\end{equation}
This lower bound is complemented by corresponding upper bound results:
The ano\-nym\-isation technique used to create the private views of the non-releasable data $X_1,\ldots,X_n$ consists in an appropriately scaled version of the classical Laplace mechanism applied on the empirical wavelet coefficients (Section~\ref{SEC:PRIVACY}).
The wavelet estimators considered in Sections~\ref{SEC:UPPER:LINEAR} and \ref{SEC:UPPER:NONLINEAR} are based on the availability of the privatised data $Z_1,\ldots,Z_n$ only.
As in the non-private case, a linear wavelet estimator attains the given rate in the homogeneous case, that is, whenever $p \geq r$ (Section~\ref{SEC:UPPER:LINEAR}).
In Section~\ref{SEC:UPPER:NONLINEAR}, we study non-linear estimators and show that an estimator using hard thresholding can nearly attain the lower bounds both in the dense and in the sparse zone.

\subsection*{Notational conventions}

For real numbers $a,b$ we write $\llbr a,b\rrbr = [a,b] \cap \Z$.
We denote with $C$ a generic constant that might change with every appearance.
For two sequences $\{a_n\}_n,\, \{b_n\}_n$, we denote by $a_n \lesssim b_n$ that there exist some constant $C>0$ and a fixed integer number $N$ such that $a_n \leq C b_n$, for all $n\geq N$.
We say that $a_n \asymp b_n$, if both $a_n \lesssim b_n$ and $b_n \lesssim a_n$. 
If $b_n >0$, we denote by $a_n \simeq b_n$ the fact that $a_n/b_n \to 1$ as $n\to \infty$.
We recall that a centred Laplace distribution with parameter $\lambda > 0$ has the probability density function $p_\lambda(x)= \frac{1}{2\lambda} \exp(-\frac {\lvert x \rvert} {\lambda})$, for all real number $x$. In particular, if $X \sim p_\lambda$, then $\E \lvert X \rvert^k = k! \lambda^k$ for all $k \in \N$.

 \section{Lower bounds}\label{SEC:LOWER}

The purpose of this section is to derive \eqref{EQ:LOWER:PRIVATE} and hence providing an analogue of \eqref{EQ:RATES:NONPRIVATE} under local $\privpar$-differential privacy.
To this purpose, we proceed in two steps.
The first lower bound, given in Proposition~\ref{PROP:LOWER:REGULAR}, is stronger in the private dense zone ($p>r/(s+1)$), whereas the second one, given in Proposition~\ref{PROP:LOWER:SPARSE}, dominates in the private sparse zone where $p \leq r/(s+1)$.
An essential tool for both proofs is a strong information theoretical inequality (our Proposition~\ref{LEM:LOWER:EX:DUCHI}) proved in \citep{duchi2018minimax}, which states a bound for the Kullback-Leibler divergence between any distributions that have been processed through an arbitrary channel guaranteeing local $\privpar$-differential privacy.
We begin with the lower bound that is dominating in the dense zone.

\begin{proposition}\label{PROP:LOWER:REGULAR}
Let $\alpha \in (0,\infty)$ and let $L, T>0$.
Then,
    \begin{equation*}
        \inf_{\substack{\ftilde\\ Q \in \Qc_\alpha}} \sup_{f \in \DBesov{s}{p}{q}(L,T)} \E_{f,Q} \lVert \ftilde - f \rVert_r^r \gtrsim (n(e^\privpar-1)^2)^{- \frac{rs}{2s+2}},
    \end{equation*}
    where the infimum is taken over all estimators $\ftilde$ based on the private views $Z_1,\ldots,Z_n$ and all Markov kernels $Q \in \Qc_\alpha$ guaranteeing local $\privpar$-differential privacy.
\end{proposition}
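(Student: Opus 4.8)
The plan is to prove the bound by Assouad's lemma applied to a hypercube of densities obtained by perturbing a fixed density at a single, well-chosen resolution level; the novelty compared with the non-private case is that the private information inequality (Proposition~\ref{LEM:LOWER:EX:DUCHI}) controls the testing affordability through the \emph{total variation} distance rather than a Kullback--Leibler/$\chi^2$-type quantity, and this is precisely what shifts the exponent from $2s+1$ to $2s+2$.

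Concretely, I would fix a smooth density $f_0 \in \DBesov{s}{p}{q}(L/2,T)$ that is bounded below by a positive constant on the subinterval where the perturbations are placed, and set
\[
f_\theta = f_0 + \gamma \sum_{k \in \Lambda_j} \theta_k \psi_{jk}, \qquad \theta = (\theta_k)_{k \in \Lambda_j} \in \{0,1\}^{m},
\]
where $\Lambda_j$ collects the $m \asymp 2^j$ indices $k$ with $\supp \psi_{jk} \subseteq [-T,T]$, and $\gamma>0$, $j$ are free parameters. Since the $\psi_{jk}$ integrate to zero each $f_\theta$ integrates to one; nonnegativity and the support constraint hold as soon as $\gamma 2^{j/2} \lesssim 1$; and because the level-$j$ coefficients of $f_\theta$ contribute $2^{j(s+1/2-1/p)}\lVert\beta_{j\cdot}\rVert_p \lesssim \gamma 2^{j(s+1/2)}$ to $J_{spq}$, membership in $\DBesov{s}{p}{q}(L,T)$ is guaranteed once $\gamma \lesssim 2^{-j(s+1/2)}$, which is more restrictive than the nonnegativity condition since $s>0$. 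Exploiting the (essentially) disjoint supports of the wavelets at level $j$, the $\L^r$-separation satisfies $\lVert f_\theta - f_{\theta'}\rVert_r^r \asymp \gamma^r 2^{j(r/2-1)}\, \dhamming(\theta,\theta')$.

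Next I would invoke Assouad's lemma. Writing $M_\theta$ for the joint law of the private views $Z_1,\dots,Z_n$ under $f_\theta^{\otimes n}$ and a mechanism $Q \in \Qc_\privpar$, it yields
\[
\inf_{\ftilde, Q} \max_\theta \E_{f_\theta,Q} \lVert \ftilde - f_\theta \rVert_r^r \gtrsim m\, \gamma^r 2^{j(r/2-1)} \min_{\dhamming(\theta,\theta')=1}\bigl(1 - \lVert M_\theta - M_{\theta'}\rVert_{\TV}\bigr).
\]
The private ingredient enters in controlling the confusion term for neighbours $\theta,\theta'$ differing in a single coordinate $k$: such a pair differs by one wavelet, so $\lVert f_\theta - f_{\theta'}\rVert_{\TV} = \tfrac{\gamma}{2}\lVert \psi_{jk}\rVert_1 \asymp \gamma 2^{-j/2}$, and Proposition~\ref{LEM:LOWER:EX:DUCHI} together with Pinsker's inequality give
\[
\lVert M_\theta - M_{\theta'}\rVert_{\TV}^2 \lesssim \KL(M_\theta \,\|\, M_{\theta'}) \lesssim n (e^\privpar-1)^2 \lVert f_\theta - f_{\theta'}\rVert_{\TV}^2 \asymp n(e^\privpar-1)^2 \gamma^2 2^{-j}.
\]
Taking $\gamma \asymp 2^{-j(s+1/2)}$ at the Besov threshold and choosing $j$ so that $n(e^\privpar-1)^2 \gamma^2 2^{-j}$ is a small constant forces $2^{j} \asymp (n(e^\privpar-1)^2)^{1/(2s+2)}$, which keeps the confusion term bounded away from $1$ and leaves the risk bounded below by $m\,\gamma^r 2^{j(r/2-1)} = \gamma^r 2^{jr/2} = 2^{-jrs} \asymp (n(e^\privpar-1)^2)^{-rs/(2s+2)}$, as claimed.

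I expect the main obstacle to be the correct application of Proposition~\ref{LEM:LOWER:EX:DUCHI} in the sequentially interactive regime: one must bound the divergence between the joint laws $M_\theta,M_{\theta'}$ of the \emph{whole} private sample rather than of a single view, and it is exactly here that the privacy constraint converts a per-observation quantity of order $\gamma^2$ into the much smaller total-variation-squared quantity of order $\gamma^2 2^{-j}$. Tracking this $2^{-j}$ gain through the balancing step is what produces the exponent $2s+2$ and constitutes the crux of the argument; the remaining verifications (validity of the $f_\theta$ as densities in $\DBesov{s}{p}{q}(L,T)$ and the separation estimate) are routine wavelet computations.
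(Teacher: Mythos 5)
Your proposal is correct and follows essentially the same route as the paper: the same hypercube of wavelet perturbations $f_\theta = f_0 + \gamma\sum_k\theta_k\psi_{jk}$ with $\gamma \asymp 2^{-j(s+1/2)}$ and $2^j \asymp (n(e^\privpar-1)^2)^{1/(2s+2)}$, the same reduction to Hamming loss via Assouad, and the same use of the Duchi--Jordan--Wainwright inequality to bound the divergence between the joint private laws by $n(e^\privpar-1)^2\gamma^2 2^{-j}$. The only cosmetic difference is that you pass to total variation via Pinsker before invoking Assouad, whereas the paper applies the Kullback--Leibler version of Assouad's lemma directly; the two are interchangeable here.
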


The proof of Proposition~\ref{PROP:LOWER:REGULAR} is based on a reduction of the class $\DBesov{s}{p}{q}$ to a finite number of hypotheses indexed by the vertices of a hypercube of suitable dimension.
It is given in Section~\ref{SEC:PROOF:LOWER:REGULAR} in the appendix.

The following proposition complements Proposition~\ref{PROP:LOWER:REGULAR} in stating a lower bound that is stronger in the private sparse zone.

\begin{proposition}\label{PROP:LOWER:SPARSE}
Let $\alpha \in (0,\infty)$.
Let $p\geq 1$, $s \geq 1/p$ and let $L, T>0$.
Then,
    \begin{equation*}
        \inf_{\substack{\ftilde\\ Q \in \Qc_\alpha}} \sup_{f \in \DBesov{s}{p}{q}(L,T)} \E_{f,Q} \Vert \ftilde - f \Vert_r^r \gtrsim \bigg( \frac{\log (n(e^\privpar-1)^2)}{n(e^\privpar-1)^2} \bigg)^{r \cdot \frac{s-1/p+1/r}{2(s-1/p) + 2}},
    \end{equation*}
    where the infimum is taken over all estimators $\ftilde$ based on the private views $Z_1,\ldots,Z_n$ and all channels $Q \in \Qc_\alpha$ providing local $\alpha$-differential privacy.
\end{proposition}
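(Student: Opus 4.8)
```latex
The plan is to establish the lower bound in the sparse zone via a reduction to a set of hypotheses that differ from a fixed base density by a single, highly localised wavelet bump, combined with Assouad/Fano-type arguments adapted to the privacy setting through Proposition~\ref{LEM:LOWER:EX:DUCHI}. The governing heuristic is that in the sparse regime the worst-case $f$ concentrates its Besov budget on one coefficient at a well-chosen resolution level $j$, so the logarithmic loss should emerge from having to search over roughly $2^j$ possible locations of this bump.

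\textbf{Construction of the hypotheses.} First I would fix a resolution level $j$ (to be optimised at the end) and consider the family of densities $f_k = f_0 + \gamma_j \psi_{jk}$, where $f_0$ is a fixed smooth density supported in $[-T,T]$ (for instance a suitably scaled smooth bump), $k$ ranges over the $\asymp 2^j$ indices for which $\supp(\psi_{jk}) \subseteq [-T,T]$, and $\gamma_j > 0$ is an amplitude calibrated so that every $f_k$ lies in $\DBesov{s}{p}{q}(L,T)$. The Besov constraint $J_{spq}(f_k) \leq L$ forces $2^{j(s+1/2-1/p)} \lvert \gamma_j \rvert \lesssim 1$, i.e.\ $\gamma_j \lesssim 2^{-j(s+1/2-1/p)}$; this is the binding constraint and it is precisely the single-active-coefficient scaling that distinguishes the sparse from the dense construction in Proposition~\ref{PROP:LOWER:REGULAR}. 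I would also need to check nonnegativity of $f_k$ (guaranteed by taking $f_0$ bounded below on the relevant region and $\gamma_j$ small) and the unit-mass condition (automatic since $\int \psi_{jk} = 0$).

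\textbf{Separation and the privatised information bound.} The $\L^r$-separation between any two hypotheses $f_k, f_{k'}$ with $k \neq k'$ is of order $\lVert \gamma_j \psi_{jk} \rVert_r \asymp \gamma_j 2^{j(1/2 - 1/r)}$, so the $r$-th power of the loss at a single misidentified location scales like $\gamma_j^r 2^{jr(1/2-1/r)}$. For the information-theoretic side, the key point is that under the privacy constraint the effective statistical information is degraded: Proposition~\ref{LEM:LOWER:EX:DUCHI} bounds the KL divergence between the privatised marginals of $f_k$ and $f_{k'}$ by a constant times $n(e^\privpar - 1)^2 \lVert f_k - f_{k'}\rVert_\infty^2$ or, more usefully here, by $n(e^\privpar-1)^2$ times the squared $\L^2$-type discrepancy $\lVert f_k - f_{k'}\rVert_2^2 \asymp \gamma_j^2$. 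This replacement of the usual $n$ by $n(e^\privpar-1)^2$ and of $\L^1$-type by $\L^2$-type separation is exactly what produces the altered exponent $2(s-1/p)+2$ in place of $2(s-1/p)+1$.

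\textbf{Fano's inequality and optimisation.} With $M \asymp 2^j$ well-separated hypotheses and pairwise privatised KL divergence bounded by $C n(e^\privpar-1)^2 \gamma_j^2$, Fano's lemma yields a nontrivial lower bound on the minimax testing error provided $C n(e^\privpar-1)^2 \gamma_j^2 \lesssim \log M \asymp j \asymp \log 2^j$. Saturating this constraint with $\gamma_j = 2^{-j(s+1/2-1/p)}$ determines the critical level $j^\ast$ from the balance $n(e^\privpar-1)^2 2^{-2j(s+1/2-1/p)} \asymp j$, i.e.\ $2^{2j^\ast(s-1/p+1)} \asymp n(e^\privpar-1)^2 / \log(n(e^\privpar-1)^2)$. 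Substituting $j^\ast$ into the loss $\gamma_{j^\ast}^r 2^{j^\ast r(1/2-1/r)}$ and simplifying the exponents gives the claimed rate $\big(\log(n(e^\privpar-1)^2)/(n(e^\privpar-1)^2)\big)^{r(s-1/p+1/r)/(2(s-1/p)+2)}$. The main obstacle I anticipate is verifying that the privatised KL bound from Proposition~\ref{LEM:LOWER:EX:DUCHI} can genuinely be applied with the $\L^2$-discrepancy $\gamma_j^2$ rather than an $\L^\infty$ quantity---and that the resulting constants are uniform over the channel family $\Qc_\privpar$ and over interactive protocols---since it is precisely this step that controls whether the correct power of $\log$ and the correct sparse-zone exponent appear; the remaining verifications (Besov membership, nonnegativity, and the Fano bookkeeping) are routine once the hypotheses and the information bound are in place.
```
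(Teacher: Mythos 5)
Your construction, separation computation, and overall Fano strategy coincide with the paper's proof, which uses the single-bump family $f_k=f_0+\gamma\psi_{jk}$ together with a multiple-hypothesis lemma and the information contraction bound of Proposition~\ref{LEM:LOWER:EX:DUCHI}. However, there is a genuine error at exactly the step you flag as critical. Proposition~\ref{LEM:LOWER:EX:DUCHI} bounds the privatised Kullback--Leibler divergence by $4n(e^{\privpar}-1)^2\,\TV^2(P_k,P_0)$, i.e.\ by a squared $\L^1$-type discrepancy, not by the squared $\L^2$ distance $\lVert f_k-f_0\rVert_2^2\asymp\gamma_j^2$ that you propose to use. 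For a wavelet bump these differ by a factor $2^{-j}$: since $\lVert\psi_{jk}\rVert_1=2^{-j/2}\lVert\psi\rVert_1$, one has $\TV^2(P_k,P_0)\asymp\gamma_j^2 2^{-j}$, whereas $\lVert f_k-f_0\rVert_2^2\asymp\gamma_j^2$. With your stated bound the Fano constraint reads $n(e^{\privpar}-1)^2\gamma_j^2\lesssim j$, which with $\gamma_j=2^{-j(s+1/2-1/p)}$ forces $2^{j(2(s-1/p)+1)}\gtrsim n(e^{\privpar}-1)^2/\log(n(e^{\privpar}-1)^2)$ and hence yields only the exponent $2(s-1/p)+1$ in the denominator --- the non-private sparse exponent with $n$ replaced by $n(e^{\privpar}-1)^2$ --- a strictly weaker bound than the one claimed. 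Your own displayed algebra silently reinstates the missing $2^{-j}$: from the balance $n(e^{\privpar}-1)^2 2^{-2j(s+1/2-1/p)}\asymp j$ you cannot conclude $2^{2j^\ast(s-1/p+1)}\asymp n(e^{\privpar}-1)^2/\log(n(e^{\privpar}-1)^2)$; the two left-hand sides differ by $2^{j}$.

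The fix is simply to use the total-variation form of the lemma, $\TV(P_k,P_0)=\tfrac{\gamma_j}{2}\,2^{-j/2}\lVert\psi\rVert_1$, which restores the correct balance $n(e^{\privpar}-1)^2\gamma_j^2 2^{-j}\asymp j$, the paper's choice $2^{j}\simeq\bigl(n(e^{\privpar}-1)^2/\log(n(e^{\privpar}-1)^2)\bigr)^{1/(2(s+1-1/p))}$, and the claimed exponent $2(s-1/p)+2$. Two minor further points: to apply the Fano-type lemma you should include $f_0$ itself in the hypothesis family as the reference measure (the paper takes $\Fc=\{f_k: k\in\Ic_j\}\cup\{f_0\}$ and bounds $N_j^{-1}\sum_{k}\KL(M_k^n,M_0^n)$, checking $M_k^n\ll M_0^n$), and Proposition~\ref{LEM:LOWER:EX:DUCHI} as stated already holds uniformly over all channels in $\Qc_{\privpar}$, including interactive ones, so no extra uniformity argument is needed there.
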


The proof of Proposition~\ref{PROP:LOWER:SPARSE} is given in Section~\ref{SEC:PROOF:LOWER:SPARSE} in the appendix.

Taking the maximum of the lower bounds obtained in Propositions~\ref{PROP:LOWER:REGULAR} and \ref{PROP:LOWER:SPARSE} yields \eqref{EQ:LOWER:PRIVATE}.
In addition to our novel lower bounds, the known bounds \eqref{EQ:RATES:NONPRIVATE} from the non-private framework still hold true under local $\privpar$-differential privacy since processing the original data $X_1,\ldots,X_n$ through a privacy mechanism can be interpreted equivalently as imposing a restriction on the set of admissible estimators in~\eqref{EQ:DEF:MINIMAX:RISK}.
More precisely, the constraint of local $\privpar$-differential privacy confines the set of potential estimators to those of the form $\ftilde = f \circ Q$ where $Q \in \Qc_\privpar$ and $f$ is any measurable function.
Thus, 
\[ \riskp \geq \risk \vee \ratep \geq \rate \vee \ratep,\]
where the quantity $\rate$ is defined in \eqref{EQ:RATES:NONPRIVATE}.
Hence, the following corollary holds.

\begin{corollary}
    Let the assumptions of Propositions~\ref{PROP:LOWER:REGULAR} and \ref{PROP:LOWER:SPARSE} hold true. Then,
    \begin{equation*}
\riskp(\lVert \cdot \rVert_r^r, \DBesov{s}{p}{q}) \gtrsim  \begin{cases} n^{-\frac{rs}{2s+1}} \vee  (n(e^\privpar-1)^2)^{-\frac{rs}{2s+2}}, & \text{if } p > \frac{r}{s+1}, \\ n^{-\frac{rs}{2s+1}} \vee \left( \frac{n(e^\privpar-1)^2}{\log (n(e^\privpar-1)^2)} \right)^{- \frac{r(s-1/p+1/r)}{2(s-1/p)+2}}, & \text{if } \frac{r}{2s+1} < p \leq \frac{r}{s+1},\\
\left( \frac{n}{\log n} \right)^{- \frac{r(s-1/p+1/r)}{2(s-1/p)+1}}  \vee  \left( \frac{n(e^\privpar-1)^2}{\log (n (e^\privpar-1)^2)} \right)^{- \frac{r(s-1/p+1/r)}{2(s-1/p)+2}}, & \text{if }p \leq \frac{r}{2s+1}. \end{cases}
\end{equation*}
\end{corollary}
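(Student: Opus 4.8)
The plan is to derive the three-case bound by combining the two private lower bounds of Propositions~\ref{PROP:LOWER:REGULAR} and~\ref{PROP:LOWER:SPARSE} with the classical non-private lower bound~\eqref{EQ:RATES:NONPRIVATE}, and then to identify which non-private regime is active inside each private regime. First I would record that taking the maximum of the two private bounds yields $\riskp \gtrsim \ratep$ with $\ratep$ as in~\eqref{EQ:LOWER:PRIVATE}: Proposition~\ref{PROP:LOWER:REGULAR} dominates when $p > r/(s+1)$ and Proposition~\ref{PROP:LOWER:SPARSE} dominates when $p \leq r/(s+1)$, the crossover being exactly the private elbow.

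The second ingredient is the inequality $\riskp \gtrsim \rate$, asserting that the non-private rate survives even when the statistician is free to choose the privacy mechanism. To see this I would note that, for any channel $Q \in \Qc_\privpar$ and any estimator $\ftilde$ measurable with respect to $Z_1,\ldots,Z_n$, the composition $\ftilde = g \circ Q$ is a (possibly randomised) estimator based on the original sample $X_1,\ldots,X_n$, the additional randomness being supplied by $Q$. Since the classical minimax lower bounds underlying~\eqref{EQ:RATES:NONPRIVATE} (Assouad- and Fano-type arguments) remain valid for randomised estimators, every such $\ftilde$ satisfies $\sup_f \E_{f,Q}\lVert \ftilde - f\rVert_r^r \gtrsim \rate$, and taking the infimum over the pair $(Q,\ftilde)$ preserves the bound. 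Combining the two ingredients gives the chain $\riskp \gtrsim \rate \vee \ratep$ already announced in the text preceding the statement.

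It then remains to read off the three branches by locating $p$ relative to the two elbow thresholds. Because $s>0$ forces $r/(2s+1) < r/(s+1)$, the range $p \in [1,\infty)$ splits into three intervals: (i) $p > r/(s+1)$, where one is simultaneously in the non-private dense zone, so $\rate = n^{-rs/(2s+1)}$, and in the private dense zone, so $\ratep = (n(e^\privpar-1)^2)^{-rs/(2s+2)}$; (ii) $r/(2s+1) < p \leq r/(s+1)$, still non-private dense but now private sparse; and (iii) $p \leq r/(2s+1)$, where both zones are sparse. Substituting the matching expressions for $\rate$ and for $\ratep$ in each interval reproduces precisely the three cases of the stated maximum.

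The only genuinely delicate point is the transfer argument $\riskp \gtrsim \rate$: one must keep track of the fact that a private estimator is a randomised, rather than deterministic, function of $X_1,\ldots,X_n$, and therefore invoke the non-private bound in the form valid for randomised procedures (equivalently, use that randomisation cannot lower the minimax risk). Everything else is bookkeeping, since the two private bounds are already established and the case split is a direct comparison of the two threshold values $r/(2s+1)$ and $r/(s+1)$.
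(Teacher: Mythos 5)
Your proposal is correct and follows essentially the same route as the paper: the text immediately preceding the corollary establishes $\riskp \geq \rate \vee \ratep$ by observing that the privacy constraint merely restricts the class of admissible (randomised) estimators of the form $\ftilde = f\circ Q$, and the corollary then follows from the case split on $p$ relative to the two thresholds $r/(2s+1) < r/(s+1)$. Your added care about invoking the non-private lower bounds in their form valid for randomised procedures is a sound elaboration of the paper's more terse remark, not a departure from it.
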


Note that the frontier between the dense and the sparse zone in the private framework is different from the one in the non-private framework leading to a partition into three regimes for the lower bound and a twofold elbow effect.
Note that these lower bounds match the upper bounds derived in Section~\ref{SEC:UPPER:LINEAR} and \ref{SEC:UPPER:NONLINEAR} at most up to logarithmic factors whenever $\alpha$ stays bounded as $n$ increases.
In addition, the bounds from the non-private setup dominate provided that $\alpha$ increases sufficiently fast in terms of $n$.
 \section{Privacy mechanisms}\label{SEC:PRIVACY}

Let us denote with $X_1,\ldots,X_n$ the real-valued random variables that represent the non-private observations held by the different data holders.
We assume that $X_1,\ldots,X_n \sim f$ for $f \in \DBesov{s}{p}{q}$.
In particular, the support of the density $f$ is contained in the interval $[-T,T]$.
In this section, we introduce a non-interactive privacy mechanism creating a private release $Z_1,\ldots,Z_n$ based on the non-private sample that satisfies the defining property of $\privpar$-differential privacy.
For this purpose, we consider a wavelet basis as in \eqref{EQ:WAVELET:BASIS}.
We assume in the sequel that the following condition on the parent wavelets is satisfied:
\begin{equation}\label{EQ:W1}
    \phi \text{ and } \psi \text{ are compactly supported on an interval } [-A,A].\tag{W1}
\end{equation}
The idea of the proposed anonymisation technique is to mask the empirical wavelet coefficients $\alphaprime_{\lowreslev k}$ and $\betaprime_{jk}$ for certain values of $j$.
A consequence of \eqref{EQ:W1} and the compact support of $f$ is that for any $\lowreslev \in \Z$ and any fixed resolution level $j \in \Z$, the corresponding $\alpha_{\lowreslev k}$ and $\beta_{jk}$ can \emph{a priori} be non-zero for a finite number of $k$ only.
We denote the set of $k$ with potentially non-zero $\alpha_{\lowreslev k}$ by $\Nc_{\lowreslev-1}$.
Analogously, for $j \geq \lowreslev$, the set of $k$ with potentially non-zero $\beta_{j k}$ is denoted with $\Nc_{j}$.

Let us now define two privacy mechanisms that will turn out to be convenient for the purposes of this paper.
It will be sufficient to consider $j_0, \, j_1 \in\mathbb{N}$ from now on.

\subsubsection*{First privacy mechanism}

For $i \in \llbr 1,n\rrbr$, $j \in \llbr \lowreslev-1, \uppreslev \rrbr$, define
\begin{equation}\label{EQ:DEF:tildeZIJK}
    Z_{ijk} = \begin{cases} \phi_{\lowreslev k}(X_i) + \sigma_{\lowreslev - 1} W_{i,\lowreslev-1,k}, & \text{ if }j= \lowreslev - 1, k \in \Nc_{\lowreslev - 1},\\
    \psi_{jk}(X_i) + \sigmatilde_j W_{ijk}, & \text{ if } j \in \llbr \lowreslev, \uppreslev \rrbr, k \in \Nc_j,
    \end{cases}
\end{equation}
where $W_{ijk}$ are independent Laplace distributed random variables with parameter $1$,
\[
\sigma_{\lowreslev - 1}= \frac{4c_A \lVert \phi  \rVert_\infty}{\alpha} \cdot 2^{\lowreslev/2} \text{ and } \sigmatilde_j = \frac{4c_A \lVert \psi  \rVert_\infty}{\alpha}\cdot \frac{\sqrt{2}}{\sqrt{2} - 1} \cdot 2^{j_1/2},
\]
for $j \in \llbr \lowreslev, \uppreslev\rrbr$ with $c_A = 2\lceil A \rceil + 1$.

\subsubsection*{Second privacy mechanism}
For $i \in \llbr 1,n\rrbr$, $j \in \llbr \lowreslev-1, \uppreslev \rrbr$, define
\begin{equation}\label{EQ:DEF:ZIJK}
    Z_{ijk} = \begin{cases} \phi_{\lowreslev k}(X_i) + \sigma_{\lowreslev - 1} W_{i,\lowreslev-1,k}, & \text{ if }j= \lowreslev - 1, k \in \Nc_{\lowreslev - 1},\\
    \psi_{jk}(X_i) + \sigma_j W_{ijk}, & \text{ if } j \in \llbr \lowreslev, \uppreslev \rrbr, k \in \Nc_j,
    \end{cases}
\end{equation}
where $W_{ijk}$ are independent Laplace distributed random variables with parameter $1$,
\[
\sigma_{\lowreslev - 1}= \frac{4c_A \lVert \phi  \rVert_\infty}{\alpha} \cdot 2^{\lowreslev/2} \text{ and } \sigma_j = \frac{4c_A \lVert \psi  \rVert_\infty}{\alpha}\cdot\frac{2\nu-1}{\nu - 1} \cdot (j \vee 1)^\nu \cdot 2^{j/2},
\]
for $j \in \llbr \lowreslev, \uppreslev\rrbr$ with $c_A = 2\lceil A \rceil + 1$ and some $\nu>1$.

Note that both privacy mechanisms in \eqref{EQ:DEF:tildeZIJK} and  \eqref{EQ:DEF:ZIJK} are non-interactive because $Z_{ijk}$ does not depend on $X_{i^\prime}$ for $i^\prime \neq i$.
The following proposition shows that both privacy mechanisms, $Z_i = (Z_{ijk})_{j \in \llbr \lowreslev-1,\uppreslev\rrbr, k \in \Nc_j}$ satisfy the condition of $\alpha$-differential privacy.

\smallskip

\begin{proposition}The privacy mechanisms given in \eqref{EQ:DEF:tildeZIJK} and \eqref{EQ:DEF:ZIJK} are local $\alpha$-differential private.
\end{proposition}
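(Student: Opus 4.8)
The plan is to verify the defining inequality \eqref{EQ:DEF:DIFF:PRIV:DENSITY} for each privacy mechanism by controlling the sensitivity of the vector of wavelet evaluations and then appealing to the standard Laplace mechanism argument. First I would fix a data holder $i$ and write the private view $Z_i$ as the vector $(Z_{ijk})_{j,k}$, whose coordinates are of the form $w_{jk}(X_i) + \sigma_{jk} W_{ijk}$, where $w_{jk}$ stands for either $\phi_{\lowreslev k}$ (when $j = \lowreslev-1$) or $\psi_{jk}$ (when $j \in \llbr \lowreslev, \uppreslev \rrbr$), and the $W_{ijk}$ are independent standard Laplace variables. Since the added noise is independent across $(j,k)$, the conditional density $q(\cdot \mid X_i = x)$ factorises into a product of shifted Laplace densities, so the likelihood ratio in \eqref{EQ:DEF:DIFF:PRIV:DENSITY} splits as
\begin{equation*}
    \frac{q(z \mid X_i = x)}{q(z \mid X_i = x^\prime)} = \prod_{j,k} \exp\!\left( \frac{\lvert z_{jk} - w_{jk}(x^\prime)\rvert - \lvert z_{jk} - w_{jk}(x)\rvert}{\sigma_{jk}} \right) \leq \exp\!\left( \sum_{j,k} \frac{\lvert w_{jk}(x) - w_{jk}(x^\prime)\rvert}{\sigma_{jk}} \right),
\end{equation*}
using the reverse triangle inequality coordinatewise. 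Thus it suffices to bound $\sum_{j,k} \sigma_{jk}^{-1} \lvert w_{jk}(x) - w_{jk}(x^\prime)\rvert$ by $\alpha$ uniformly over $x, x^\prime \in [-T,T]$.

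The key step is therefore the sensitivity estimate. For each fixed resolution level $j$, I would bound the per-level contribution by noting two facts coming from \eqref{EQ:W1}: the wavelets are bounded, so $\lvert w_{jk}(x) - w_{jk}(x^\prime)\rvert \leq 2 \cdot 2^{j/2}\lVert \psi\rVert_\infty$ (respectively with $\lVert \phi \rVert_\infty$), and the compact support on $[-A,A]$ implies that for any fixed $x$ at most $c_A = 2\lceil A\rceil + 1$ values of $k$ give a nonzero $\psi_{jk}(x)$. Consequently at most $2c_A$ indices $k$ contribute to the difference $w_{jk}(x) - w_{jk}(x^\prime)$ at level $j$ (those active for $x$ or for $x^\prime$), and the level-$j$ sum over $k$ is at most $2c_A \cdot 2 \cdot 2^{j/2}\lVert \psi\rVert_\infty = 4 c_A \lVert \psi \rVert_\infty \, 2^{j/2}$. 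I expect the slightly delicate point here to be the exact counting of active indices; a clean way is to observe that $\sum_k \lvert w_{jk}(x) - w_{jk}(x^\prime)\rvert \leq \sum_k \lvert w_{jk}(x)\rvert + \sum_k \lvert w_{jk}(x^\prime)\rvert$ and bound each summand by $c_A \cdot 2^{j/2}\lVert \psi\rVert_\infty$.

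With the per-level bound in hand, the choice of the scaling parameters is engineered so that the sum telescopes or converges to $\alpha$. For the first mechanism, dividing the level-$j$ contribution $4c_A\lVert\psi\rVert_\infty 2^{j/2}$ by $\sigmatilde_j = \frac{4c_A\lVert\psi\rVert_\infty}{\alpha}\cdot\frac{\sqrt 2}{\sqrt 2 - 1}\cdot 2^{\uppreslev/2}$ gives a term at most $\alpha \frac{\sqrt2 - 1}{\sqrt 2} 2^{(j-\uppreslev)/2}$, and summing the geometric series $\sum_{j \leq \uppreslev} 2^{(j-\uppreslev)/2}$ (together with the father-wavelet term at level $\lowreslev - 1$, handled identically using $\sigma_{\lowreslev-1}$) is bounded by $\alpha$ by the geometric-series identity $\sum_{m\geq 0} 2^{-m/2} = \frac{\sqrt 2}{\sqrt 2 - 1}$. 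For the second mechanism, the factor $\frac{2\nu-1}{\nu-1}(j\vee 1)^\nu 2^{j/2}$ in $\sigma_j$ yields level-$j$ terms of order $\alpha\,(j\vee 1)^{-\nu}$ up to the constant, and summability follows from $\nu > 1$ via $\sum_{j\geq 1} j^{-\nu} \leq \frac{\nu}{\nu-1}$ (or the comparable bound matching the prefactor), again leaving the total sum below $\alpha$. In both cases one concludes $\sum_{j,k}\sigma_{jk}^{-1}\lvert w_{jk}(x)-w_{jk}(x^\prime)\rvert \leq \alpha$, which is exactly \eqref{EQ:DEF:DIFF:PRIV:DENSITY}; the non-interactivity is immediate since $Z_{ijk}$ depends on $X_i$ alone, completing the proof.
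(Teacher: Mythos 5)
Your proposal follows essentially the same route as the paper: factorise the conditional density into independent Laplace coordinates, apply the (reverse) triangle inequality, count the at most $c_A$ active translates per level via the compact support, and sum the resulting geometric (resp.\ $(j\vee 1)^{-\nu}$) series against the chosen scales. One bookkeeping point needs fixing before the constants close: you must use the per-level bound $\sum_k \lvert w_{jk}(x)-w_{jk}(x')\rvert \leq 2c_A\,2^{j/2}\lVert\psi\rVert_\infty$ (the ``clean way'' you yourself describe), not the looser $4c_A\,2^{j/2}\lVert\psi\rVert_\infty$ that you then carry into the final computation. With the $4c_A$ bound the mother-wavelet levels alone already sum to $\alpha$, and adding the father-wavelet contribution pushes the total to roughly $\tfrac{3}{2}\alpha$ or $2\alpha$, so you would only establish $2\alpha$-differential privacy. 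With the $2c_A$ bound the detail levels contribute $\alpha/2$ and the level $\lowreslev-1$ term contributes the remaining $\alpha/2$, exactly as in the paper. The same remark applies to the second mechanism, where the relevant series is $\sum_{j\geq 0}(j\vee 1)^{-\nu}\leq 2+(\nu-1)^{-1}=\tfrac{2\nu-1}{\nu-1}$, matching the prefactor in $\sigma_j$.
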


\begin{proof}
By definition of the privacy mechanism in \eqref{EQ:DEF:tildeZIJK}, the conditional density of $Z_i$ given $X_i=x_i$ can be written as
\begin{eqnarray*}
    q^{Z_i|X_i=x_i}(z_i) &=& \prod_{k\in \Nc_{\lowreslev - 1}} \frac{1}{2\sigma_{\lowreslev-1}} \exp \bigg( - \frac{\lvert z_{i,j_0-1,k} - \phi_{j_0 k}(x_i) \rvert}{\sigma_{\lowreslev - 1}} \bigg) \\
    && \cdot \prod_{j = \lowreslev}^{\uppreslev} \prod_{k\in \Nc_j} \frac{1}{2\sigmatilde_j} \exp \bigg( -  \frac{\lvert z_{ijk} - \psi_{jk}(x_i) \rvert}{\sigmatilde_j} \bigg).
\end{eqnarray*}
Thus, by the reverse and the ordinary triangle inequality,
\begin{align*}
    \frac{q^{Z_i|X_i=x_i}(z_i)}{q^{Z_i|X_i=x_i^\prime}(z_i)} &= \prod_{k \in \Nc_{\lowreslev - 1}} \exp \bigg( \frac{\lvert z_{i,j_0-1,k} - \phi_{\lowreslev k}(x_i^\prime) \vert - \vert z_{i,j_0-1, k} - \phi_{\lowreslev k}(x_i) \rvert}{\sigma_{\lowreslev-1}}  \bigg)\\
    &\hspace{2em}\cdot \prod_{j = \lowreslev}^{\uppreslev} \prod_{k \in \Nc_j} \exp \bigg(  \frac{\lvert z_{ijk} - \psi_{jk}(x_i^\prime) \vert - \vert z_{ijk} - \psi_{jk}(x_i) \rvert}{\sigmatilde_j}   \bigg)\\
    &\hspace{-4em}\leq \exp \bigg(\sum_{k \in \Nc_{\lowreslev - 1}}  \frac{\lvert \phi_{\lowreslev k}(x_i) \vert + \vert \phi_{\lowreslev k}(x_i^\prime) \rvert}{\sigma_{\lowreslev - 1}} \bigg) \cdot \exp \bigg(\sum_{j = \lowreslev}^{\uppreslev}  \sum_{k\in \Nc_j} \frac{\lvert \psi_{jk}(x_i) \vert + \vert \psi_{jk}(x_i^\prime) \rvert}{\sigmatilde_j} \bigg).
\end{align*}
Note that for any fixed $x_i$ and arbitrary $j$, $\psi_{jk}(x_i) \neq 0$ holds only for at most $c_A = 2\lceil A \rceil + 1$ different $k$, and the same argument is valid for $\phi_{\lowreslev k}(x_i)$.
Thus,
\begin{align*}
    \frac{q^{Z_i|X_i=x_i}(z_i)}{q^{Z_i|X_i=x_i^\prime}(z_i)} &\leq \exp \bigg( \frac{2\cdot 2^{\lowreslev/2}c_A \lVert \phi \rVert_\infty}{\sigma_{\lowreslev-1}} \bigg) \cdot \exp \bigg( 2 \Vert \psi \Vert_\infty c_A \cdot \sum_{j = \lowreslev}^{\uppreslev} \frac{ 2^{j/2}}{\sigmatilde_j} \bigg)\\
    &\leq \exp \bigg(\frac \alpha 2 + \frac {\alpha(\sqrt{2} - 1)}{2 \sqrt{2}} \sum_{j=j_0}^{j_1} \frac {2^{j/2}}{2^{j_1/2}} \bigg)\leq \exp(\privpar).
\end{align*}
For the privacy mechanism~\eqref{EQ:DEF:ZIJK}, analogous calculations yield for the conditional density of $Z_i$ given $X_i = x_i$ that
\begin{align*}
    \frac{q^{Z_i|X_i=x_i}(z_i)}{q^{Z_i|X_i=x_i^\prime}(z_i)} &\leq \exp \bigg( \frac{2\cdot 2^{\lowreslev/2}c_A \lVert \phi \rVert_\infty}{\sigma_{\lowreslev-1}} \bigg) \cdot \exp \bigg( 2 \Vert \psi \Vert_\infty c_A \cdot \sum_{j = \lowreslev}^{\uppreslev} \frac{ 2^{j/2}}{\sigma_j} \bigg)\\
    &\leq \exp \bigg(\frac \alpha 2 + \frac {\alpha}{2}\cdot\frac{\nu-1}{2\nu-1}(2+ \sum_{j=2}^{\infty} j^{-\nu}) \bigg)\leq \exp(\privpar),
\end{align*}
where we used that $\sum_{j=j_0}^{\uppreslev} (j\vee 1)^{-\nu} \leq \sum_{j=0}^{\infty} (j\vee 1)^{-\nu}$ and $\sum_{j=2}^{\infty} j^{-\nu} \leq (\nu - 1)^{-1}$.
\end{proof}
 \section{Upper bound for linear wavelet estimators}\label{SEC:UPPER:LINEAR}

The expansion~\eqref{EQ:WAV:EXP} suggests to consider estimators of the form
\begin{equation*}
    \fhat(x) = \sum_{k \in \Nc_{\lowreslev - 1}} \alphahat_{\lowreslev k} \phi_{\lowreslev k}(x) + \sum_{j=\lowreslev}^{\uppreslev} \sum_{k \in \Nc_j} \betahat_{jk} \psi_{jk}(x)
\end{equation*}
with appropriate estimators $\alphahat_{\lowreslev k}$ and $\betahat_{j k}$ of $\alpha_{\lowreslev k}$ and $\beta_{j k}$, respectively.
Note that in the local private framework, estimators of the wavelet coefficients are allowed to depend on the private views $Z_{ijk}$ only but not on the hidden $X_i$.
For the results concerning the linear estimator in this section, it suffices to consider the case $\lowreslev=0$.
In this case we put $\psi_{-1,k} = \phi_{0,k}$ and define a linear wavelet estimator through
\begin{align*}
    \fhatlin(x) &= \sum_{j = -1}^{\uppreslev} \sum_{k \in \Nc_j} \betahat_{jk} \psi_{jk}(x) \quad \text{with} \quad \betahat_{jk} = \frac{1}{n} \sum_{i=1}^n Z_{ijk}.
\end{align*}
Grant to $\E W_{ijk} = 0$, the definition of $\betahat_{jk}$ is natural and provides an unbiased estimate of the true wavelet coefficient $\beta_{jk}$.

The following proposition provides an upper bound for the estimator $\fhatlin$ in the so-called \emph{matched case} when $r=p$.
Its proof is given in Appendix~\ref{SEC:PROOFS:UPPER:LINEAR}.

\begin{proposition}\label{PROP:MATCHED}
Assume that the father wavelet $\varphi$ satifies Assumption~\eqref{COND:wavelets}. Let $1 \leq p < \infty$ and $Z_{ijk}$ defined as in \eqref{EQ:DEF:tildeZIJK}.
Then \begin{equation}\label{EQ:MATCHED:1}
\sup_{f \in \DBesov{s}{p}{q}} \E \lVert \fhatlin - f \rVert_p^p \lesssim 2^{-\uppreslev p s} + \bigg( \frac{2^{2\uppreslev}}{n\privpar^2} \bigg)^{p/2}  + \bigg( \frac{2^{j_1}}{n}\bigg)^{p/2}.
\end{equation}
In particular, choosing $\uppreslev = \uppreslev(n,\privpar)$ such that 
\begin{equation}\label{EQ:j1linear}
2^{\uppreslev} \asymp (n\privpar^2)^{\frac{1}{2s+2}} \wedge n^{\frac 1{2s+1}}, 
\end{equation}
we obtain
\begin{equation}\label{EQ:MATCHED:2} 
\sup_{f \in \DBesov{s}{p}{q}} \E \lVert \fhatlin - f \rVert_p^p \lesssim  (n\privpar^2)^{- \frac{ps}{2s+2}} \vee n^{-\frac {ps}{2s+1}}.
\end{equation}
\end{proposition}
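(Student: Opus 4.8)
The plan is to split the risk into a deterministic approximation (bias) part and a stochastic part. Working with $\lowreslev = 0$ and the convention $\psi_{-1,k}=\phi_{0k}$ as in the text, and writing $f=\sum_{j\geq-1}\sum_k\beta_{jk}\psi_{jk}$, I would decompose
\[ \fhatlin - f = \underbrace{\sum_{j=-1}^{\uppreslev}\sum_{k\in\Nc_j}(\betahat_{jk}-\beta_{jk})\psi_{jk}}_{=:S}\;-\;\underbrace{\sum_{j>\uppreslev}\sum_k\beta_{jk}\psi_{jk}}_{=:B}, \]
and use the elementary convexity bound $\|S-B\|_p^p\leq 2^{p-1}(\|S\|_p^p+\|B\|_p^p)$, so that it suffices to control $\E\|S\|_p^p$ and $\|B\|_p^p$ separately. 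For the bias, since $f\in\DBesov{s}{p}{q}(L,T)\subseteq\Besov{s}{p}{q}(L)$, the standard wavelet characterisation through $J_{spq}$ gives $\|B\|_p=\|f-P_{\uppreslev}f\|_p\lesssim 2^{-\uppreslev s}$, hence $\|B\|_p^p\lesssim 2^{-\uppreslev p s}$, which is the first term of \eqref{EQ:MATCHED:1} and uses only $s>0$ and $J_{spq}(f)\leq L$.

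For the stochastic term I would proceed level by level. By Minkowski's inequality, $(\E\|S\|_p^p)^{1/p}\leq\sum_{j=-1}^{\uppreslev}(\E\|g_j\|_p^p)^{1/p}$ with $g_j=\sum_{k\in\Nc_j}(\betahat_{jk}-\beta_{jk})\psi_{jk}$. Thanks to the compact support assumption \eqref{EQ:W1}, at each point only finitely many translates overlap, which yields the single-level bound $\|g_j\|_p^p\lesssim 2^{j(p/2-1)}\sum_{k\in\Nc_j}|\betahat_{jk}-\beta_{jk}|^p$. Writing $\betahat_{jk}-\beta_{jk}=\frac1n\sum_i[(\psi_{jk}(X_i)-\beta_{jk})+\sigmatilde_j W_{ijk}]$ as a normalised sum of independent centred variables, I would bound $\E|\betahat_{jk}-\beta_{jk}|^p$ by Rosenthal's inequality when $p\geq2$ (and by $\E|\cdot|^p\leq(\E|\cdot|^2)^{p/2}$ when $1\leq p<2$). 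The required ingredients are the single-coefficient moments $\E|\psi_{jk}(X_1)|^2\lesssim1$ and $\E|\psi_{jk}(X_1)|^p\lesssim 2^{j(p/2-1)}$ (from the normalisation and compact support of $\psi_{jk}$ together with a uniform bound on the densities in the class), the Laplace moments $\E|W|^k=k!$ recalled in the notational conventions, and crucially the \emph{level-independent} noise scale $\sigmatilde_j\asymp 2^{\uppreslev/2}/\privpar$ of the first mechanism \eqref{EQ:DEF:tildeZIJK}. This gives a per-coefficient error of order $n^{-1}(1+2^{\uppreslev}/\privpar^2)$ and, after inserting $|\Nc_j|\asymp 2^j$, the per-level estimate $(\E\|g_j\|_p^p)^{1/p}\lesssim (2^j/n)^{1/2}+(2^{j+\uppreslev}/(n\privpar^2))^{1/2}$.

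The decisive step is the summation over $j$. The sampling contribution $\sum_{j\leq\uppreslev}(2^j/n)^{1/2}$ is geometric and governed by its top term $(2^{\uppreslev}/n)^{1/2}$, yielding the third term of \eqref{EQ:MATCHED:1}. The privacy contribution, however, is $(2^{\uppreslev}/(n\privpar^2))^{1/2}\sum_{j\leq\uppreslev}2^{j/2}\asymp (2^{\uppreslev}/(n\privpar^2))^{1/2}\cdot 2^{\uppreslev/2}=(2^{2\uppreslev}/(n\privpar^2))^{1/2}$: because the mechanism injects the \emph{same} noise level $\asymp 2^{\uppreslev/2}/\privpar$ at every resolution, the Minkowski geometric sum contributes an extra factor $2^{\uppreslev/2}$ and turns $2^{\uppreslev}$ into $2^{2\uppreslev}$. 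Raising to the $p$-th power gives the second term of \eqref{EQ:MATCHED:1}, and I would separately verify that the Rosenthal remainder terms are of smaller order (using the case split $p\geq2$ versus $p<2$ and $2^{\uppreslev}\leq n$). I expect this sharp tracking of the $\uppreslev$-dependence, rather than the routine moment estimates, to be the main obstacle, since it is precisely what separates the private exponent $2s+2$ from the non-private $2s+1$.

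Finally, \eqref{EQ:MATCHED:2} follows by optimising \eqref{EQ:MATCHED:1} over $\uppreslev$ through the choice \eqref{EQ:j1linear}. One balances the squared bias $2^{-\uppreslev p s}$ against whichever variance term dominates: when $(n\privpar^2)^{1/(2s+2)}\leq n^{1/(2s+1)}$, the privacy term $(2^{2\uppreslev}/(n\privpar^2))^{p/2}$ matches the bias and both equal $(n\privpar^2)^{-ps/(2s+2)}$, while the sampling term $(2^{\uppreslev}/n)^{p/2}$ is then of smaller order; in the complementary regime the roles of the two variance terms are reversed and the bias and the sampling term both equal $n^{-ps/(2s+1)}$. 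In either case all three terms are bounded by $(n\privpar^2)^{-ps/(2s+2)}\vee n^{-ps/(2s+1)}$, which is the claimed rate.
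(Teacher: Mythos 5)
Your proposal is correct and arrives at exactly the bound \eqref{EQ:MATCHED:1}; the overall architecture (bias/variance split, Besov approximation bound for the bias, Rosenthal's inequality plus the finite-overlap property of compactly supported wavelets for the stochastic term, and the observation that the constant noise scale $\sigmatilde_j \asymp 2^{\uppreslev/2}/\privpar$ across levels is what turns $2^{\uppreslev}$ into $2^{2\uppreslev}$ in the privacy variance) coincides with the paper's. The bookkeeping differs in one respect: the paper first splits the stochastic term into the sampling part, written via the projection kernel $K_{\uppreslev+1}(x,y)$ and handled by citing the non-private argument of H\"ardle et al., and the Laplace part, to which it applies Rosenthal \emph{once}, pointwise in $x$, to the full triple sum over $i$, $j$, $k$ (after Tonelli), exploiting that at each fixed $x$ only $c_A$ translates per level are nonzero. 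You instead keep everything coefficient-wise, apply Rosenthal per coefficient to the combined variable $(\psi_{jk}(X_i)-\beta_{jk})+\sigmatilde_j W_{ijk}$, and then sum over levels with Minkowski's inequality in $\L^p(\Omega\times\R)$. Both routes are valid and yield the same three terms; yours is slightly more modular and avoids the kernel representation, at the cost of the extra triangle-inequality step across levels (harmless here, since the level sums are geometric) and of having to check by hand that the Rosenthal remainder $n^{1-p}(\cdot)$ terms are dominated when $2^{\uppreslev}\leq n$ — a check the paper performs implicitly in its displayed computation. Both arguments use, without stating it in the proposition, that $\lVert f\rVert_\infty$ is uniformly bounded over the class (via $s>1/p$), yours for $\E\lvert\psi_{jk}(X_1)\rvert^m$ and the paper's through the cited non-private bound \eqref{EQ:MATCHED:STOCH:1}. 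The final optimisation over $\uppreslev$ is identical.
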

The upper bound \eqref{EQ:MATCHED:2} suggests the following interpretation:
As long as $\privpar^2 \geq n^{1/(2s+1)}$, the estimator $\fhatlin$ attains the rate $n^{-ps/(2s+1)}$ known to be optimal when the sample $X_1,\ldots,X_n$ is available.
However, as soon as $\privpar^2 < n^{1/(2s+1)}$, this standard rate is deteriorated and the slower rate $(n \privpar^2)^{-ps/(2s+2)}$ is attained.
As in \citep{duchi2018minimax}, the alteration of the rate in comparison to the non-private framework concerns both the effective sample size (that changes from $n$ to $n\alpha^2$) and the exponent appearing in the rate.
In contrast to the procedure suggested in \citep{duchi2018minimax}, however, the privacy mechanism \eqref{EQ:DEF:tildeZIJK} consists in a mere perturbation of the empirical wavelet coefficients by Laplace noise, and no further sampling technique is necessary to obtain a privacy channel enabling rate optimal estimation of $f$.

Although the risk bound of Proposition~\ref{PROP:MATCHED} is valid only in the matched case, it can be extended to the case $r \neq p$ by means of the following proposition.
Its proof is given in Appendix~\ref{SEC:PROOFS:UPPER:LINEAR}.

\begin{corollary}\label{COR:UPPER:LINEAR}
Assume that the father wavelet $\varphi$ satifies Assumption~\eqref{COND:wavelets}.
Let $1 \leq p,r < \infty$ and $Z_{ijk}$ defined as in \eqref{EQ:DEF:tildeZIJK}, and put by $\sprime = s - (1/p - 1/r)_+$.
Then, choosing $\uppreslev$ as in \eqref{EQ:j1linear} yields
\begin{equation*}
    \sup_{f \in \DBesov{s}{p}{q}} \E \Vert \fhatlin - f \Vert_r^r \lesssim (n\privpar^2)^{-\frac{rs^\prime}{2\sprime+2}} \vee n^{-\frac {r\sprime}{2\sprime+1}}.
\end{equation*}
\end{corollary}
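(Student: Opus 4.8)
The plan is to reduce the unmatched case $r \neq p$ to the matched case $r=p$ already settled in Proposition~\ref{PROP:MATCHED}. The key observation is that the estimator $\fhatlin$ and the privacy mechanism \eqref{EQ:DEF:tildeZIJK} depend neither on $s$ nor on $p,q$, but only on the truncation level $\uppreslev$. Consequently Proposition~\ref{PROP:MATCHED} remains valid verbatim if $(s,p)$ is replaced throughout by $(\sprime,r)$, and the whole argument amounts to exhibiting an embedding of the density class that lets us invoke the proposition at these shifted indices.

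\textbf{Embedding of the density class.} First I would show that
\[
\DBesov{s}{p}{q}(L,T)\subseteq\DBesov{\sprime}{r}{q}(L^\prime,T)
\]
for some $L^\prime=L^\prime(L,p,r)>0$, with $\sprime=s-(1/p-1/r)_+$. The constraints $f\geq 0$, $\int_\R f=1$ and $\supp(f)\subseteq[-T,T]$ are insensitive to a change of Besov indices, so it suffices to prove $J_{\sprime rq}(f)\lesssim J_{spq}(f)$, which is most transparent at the level of wavelet coefficients. If $p\leq r$ then, level by level, $\lVert\beta_{j\cdot}\rVert_r\leq\lVert\beta_{j\cdot}\rVert_p$, and since $\sprime+1/2-1/r=s+1/2-1/p$ the weight exponents coincide, giving $J_{\sprime rq}(f)\leq J_{spq}(f)$ with $L^\prime=L$. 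If $p>r$ then $\sprime=s$, and I would use that compact support of $f$ together with \eqref{EQ:W1} forces at most $\lesssim 2^j$ nonzero coefficients per level, so Hölder's inequality yields $\lVert\beta_{j\cdot}\rVert_r\lesssim 2^{j(1/r-1/p)}\lVert\beta_{j\cdot}\rVert_p$; the factor $2^{j(1/r-1/p)}$ again realigns the weight exponent to $s+1/2-1/p$, whence $J_{srq}(f)\lesssim J_{spq}(f)$. The father-wavelet coefficients $\alpha_{0\cdot}$ are handled identically at the single level $j=0$.

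\textbf{Invoking the matched case.} Using the inclusion above and the substitution $(s,p)\to(\sprime,r)$, Proposition~\ref{PROP:MATCHED} gives, uniformly over $\DBesov{s}{p}{q}(L,T)\subseteq\DBesov{\sprime}{r}{q}(L^\prime,T)$,
\[
\sup_{f\in\DBesov{s}{p}{q}}\E\lVert\fhatlin-f\rVert_r^r\lesssim 2^{-\uppreslev r\sprime}+\bigg(\frac{2^{2\uppreslev}}{n\privpar^2}\bigg)^{r/2}+\bigg(\frac{2^{\uppreslev}}{n}\bigg)^{r/2}.
\]
Choosing $\uppreslev$ as in \eqref{EQ:j1linear} but with $\sprime$ in place of $s$, i.e.\ $2^{\uppreslev}\asymp(n\privpar^2)^{1/(2\sprime+2)}\wedge n^{1/(2\sprime+1)}$, balances the bias and variance terms exactly as in the proof of Proposition~\ref{PROP:MATCHED} (the third term being negligible), and yields the announced bound $(n\privpar^2)^{-r\sprime/(2\sprime+2)}\vee n^{-r\sprime/(2\sprime+1)}$.

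The step requiring most care is the embedding, and specifically the case $p>r$: there the inclusion is not a pure Besov embedding but relies crucially on the compact support assumption \eqref{EQ:W1} together with $\supp(f)\subseteq[-T,T]$ to control the number of active wavelet coefficients per resolution level, and it is precisely this cardinality bound that produces the $(1/p-1/r)_+$ correction in $\sprime$. I would also check that Proposition~\ref{PROP:MATCHED} is legitimately applicable at smoothness $\sprime$: this holds because $0<\sprime\leq s<N+1$, so Assumption~\ref{COND:wavelets} and the wavelet characterisation of $\Besov{\sprime}{r}{q}$ remain in force.
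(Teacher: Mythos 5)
Your proposal is correct and follows the paper's strategy in its essential point: reduce the unmatched case to the matched case of Proposition~\ref{PROP:MATCHED} via an embedding of the density class. For $p \leq r$ your argument is identical to the paper's (the Besov embedding $\Besov{s}{p}{q} \subset \Besov{\sprime}{r}{q}$ followed by the matched bound at $(\sprime,r)$). For $p > r$ you deviate slightly: the paper applies H\"older's inequality directly to the risk, using that $\fhatlin - f$ is supported on a set of bounded Lebesgue measure to get $\E \Vert \fhatlin - f\Vert_r^r \leq \bar C^{1-r/p}\big(\E\Vert\fhatlin-f\Vert_p^p\big)^{r/p}$ and then invokes the matched case at $(s,p)$; you instead prove the class inclusion $\DBesov{s}{p}{q}(L,T)\subseteq\DBesov{s}{r}{q}(L',T)$ at the coefficient level, using the $\lesssim 2^j$ bound on active coefficients per level, and invoke the matched case at $(s,r)$. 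Both routes rest on the same compact-support input and give the same rate, so this is a cosmetic rather than substantive difference. One point in your favour: you explicitly tune $\uppreslev$ via \eqref{EQ:j1linear} with $\sprime$ in place of $s$, which is what the application of Proposition~\ref{PROP:MATCHED} at smoothness $\sprime$ actually requires when $p<r$; the paper's statement reads as if $\uppreslev$ were tuned to $s$, but its proof implicitly does what you do.
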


Corollary~\ref{COR:UPPER:LINEAR} together with Proposition~\ref{PROP:LOWER:REGULAR} shows that the estimator $\fhatlin$ is of optimal order in the dense homogeneous zone where $p \geq r$ (which is equivalent to $s=s^\prime$) and for $\alpha$ in $(0, \bar \alpha]$.
In analogy to \citep{donoho1996density}, it would be possible to suggest a non-linear estimation procedure depending on $s$ that is optimal (up to logarithmic factors in some cases) in the non-homogeneous dense case and in the sparse case as well.
However, in Section~\ref{SEC:UPPER:NONLINEAR}, we directly propose a non-linear estimator that is adaptive to the smoothness $s$ of the underlying density (as well as to the other parameters $p$ and $q$ of the Besov space). 
 \section{Upper bounds for the non-linear adaptive  estimator}\label{SEC:UPPER:NONLINEAR}

In this section, the privacy mechanism is given by \eqref{EQ:DEF:ZIJK} in Section~\ref{SEC:PRIVACY}. We study the theoretical properties of the \emph{non-linear} wavelet estimators of the form
\begin{equation}\label{EQ:DEF:NONLINEAR:ESTIMATOR}
    \festnl(x) = \sum_{k} \alphahat_{j_0 k} \phi_{j_0 k}(x) + \sum_{j=j_0}^{\uppreslev} \sum_k \betatilde_{jk} \psi_{jk}(x)
\end{equation}
where
\begin{equation*}
    \alphahat_{j_0 k} = \frac{1}{n} \sum_{i=1}^n Z_{i, j_0- 1,k} \quad \text{and} \quad \betatilde_{jk} = \betahat_{jk} \cdot \1_{ \{ \lvert \betahat_{jk} \rvert \geq Kt \}},
\end{equation*}
and $\betahat_{jk} = \frac{1}{n} \sum_{i=1}^n Z_{ijk}$ as in Section~\ref{SEC:UPPER:LINEAR} (the choice of $t$ and the value of the numerical constant $K$ are specified in Theorem~\ref{THM:UPPER:NONLINEAR} and its proof below).
Thus, non-linearity enters only with respect to the estimation of the detail coefficients $\beta_{jk}$.

\begin{theorem}\label{THM:UPPER:NONLINEAR}
Let the father wavelet $\varphi$ satisfy Assumption~\ref{COND:wavelets} for some integer $N>0$.
Let the private views $Z_1,\ldots,Z_n$ of the sample $X_1,\ldots,X_n$ be generated with the privacy mechanism in~\eqref{EQ:DEF:ZIJK}.
Consider the estimator $\festnl$ defined in~\eqref{EQ:DEF:NONLINEAR:ESTIMATOR} with
\begin{itemize}[itemsep=0.5em]
    \item $\lowreslev \in \N$ such that $2^{\lowreslev} \asymp (n\privpar^2)^{\frac{1}{2(N+1)+2}} \wedge n^{\frac{1}{2(N+1)+1}}$,
    \item $\uppreslev = \uppreslev^\prime \wedge \uppreslev^{\pprime}$ where $\uppreslev^{\prime}$, $\uppreslev^{\pprime} \in \N$ are such that
    \[ 2^{\uppreslev^\prime} \asymp \frac{n}{\log n} , \qquad \text{and} \qquad 2^{2\uppreslev^{\pprime}} \asymp \frac{n\privpar^2}{\log(n\privpar^2)},\]
    \item $K= 4(\overline L + \sigma)$ for some $\overline L>0$ and $ \sigma = 4 c_A \|\psi\|_\infty \cdot \frac{2\nu-1}{\nu-1}$ with $\nu$ introduced in the definition of the second privacy mechanism,
    \item $t = \threshold = \gamma \cdot \frac {j^{\nu+1/2}}{\sqrt{ n }} \cdot ( 1 \vee \frac{2^{j/2}}{\alpha})$ for $j \in \llbr \lowreslev,\uppreslev\rrbr$ and some sufficiently large constant $\gamma$ (for instance, $\gamma \geq r(N+1)$ works).
\end{itemize}
Then, the risk bound 
\begin{equation*}
     \sup_{(s,p,q,L) \in \Theta} \sup_{f \in \DBesov{s}{p}{q}(L,T)} \E \lVert \festnl - f \rVert_r^r  \lesssim (\log n)^C \cdot \mathfrak R^\star_{n,\alpha} 
     \end{equation*}
     where
     \begin{equation*}
     \mathfrak R^\star_{n,\alpha} = 
     \begin{cases} n^{-\frac{rs}{2s+1}} \vee  (n\privpar^2)^{-\frac{rs}{2s+2}}, & \text{if } p > \frac{r}{s+1}, \\ n^{-\frac{rs}{2s+1}} \vee \left( \frac{n\privpar^2}{\log (n\privpar^2)} \right)^{- \frac{r(s-1/p+1/r)}{2(s-1/p)+2}}, & \text{if } \frac{r}{2s+1} < p \leq \frac{r}{s+1},\\
\left( \frac{n}{\log n} \right)^{- \frac{r(s-1/p+1/r)}{2(s-1/p)+1}}  \vee  \left( \frac{n\privpar^2}{\log (n\privpar^2)} \right)^{- \frac{r(s-1/p+1/r)}{2(s-1/p)+2}}, & \text{if }p \leq \frac{r}{2s+1}, \end{cases}
\end{equation*}
and where
\[\Theta = (1/p, N+1) \times [1,\infty) \times [1,\infty) \times [\underline L, \overline L]\]
for some $0<\underline L \leq \overline L < \infty$.
\end{theorem}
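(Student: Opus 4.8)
The plan is to follow the classical analysis of non-linear wavelet thresholding estimators (as in \citep{haerdle1998wavelets}), modified to account for the additional Laplace noise injected by the privacy mechanism \eqref{EQ:DEF:ZIJK}. Writing $f = \sum_k \alpha_{\lowreslev k}\phi_{\lowreslev k} + \sum_{j \geq \lowreslev}\sum_k \beta_{jk}\psi_{jk}$, I would first decompose the error $\festnl - f$ into three parts: the stochastic error of the father-level coefficients at resolution $\lowreslev$, the thresholding error of the detail coefficients summed over $\lowreslev \le j \le \uppreslev$, and the tail bias $\sum_{j > \uppreslev}\sum_k \beta_{jk}\psi_{jk}$ arising from truncation at level $\uppreslev$. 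Using the elementary bound $\lVert a+b+c\rVert_r^r \lesssim \lVert a\rVert_r^r+\lVert b\rVert_r^r+\lVert c\rVert_r^r$ together with the within-level norm equivalence $\lVert\sum_k c_k \psi_{jk}\rVert_r^r \asymp 2^{j(r/2-1)}\sum_k \lvert c_k\rvert^r$, the whole risk reduces to controlling weighted $\ell^r$-norms of coefficient deviations level by level.

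The two outer terms are the more routine ones. The tail bias is handled deterministically: membership of $f$ in $\DBesov{s}{p}{q}(L,T)$ together with the Besov embedding that accounts for the loss of smoothness when $p<r$ gives a decay of order $2^{-\uppreslev r \sprime}$ with $\sprime = s-(1/p-1/r)_+$, which with the prescribed $\uppreslev$ sits below the target rate. The father-level term is controlled by the variance of $\alphahat_{\lowreslev k}=\frac1n\sum_i Z_{i,\lowreslev-1,k}$, which splits into a sampling part of order $1/n$ and a Laplace part of order $\sigma_{\lowreslev-1}^2/n \asymp 2^{\lowreslev}/(n\alpha^2)$; the choice $2^{\lowreslev}\asymp (n\alpha^2)^{1/(2(N+1)+2)}\wedge n^{1/(2(N+1)+1)}$ makes this contribution correspond to the smoothest admissible class, hence negligible relative to $\mathfrak R^\star_{n,\alpha}$.

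The core of the argument is the detail part, and the first step is a deviation inequality for the privatised empirical coefficients $\betahat_{jk}=\frac1n\sum_i Z_{ijk}$. Writing $\betahat_{jk}-\beta_{jk}=\frac1n\sum_i(\psi_{jk}(X_i)-\beta_{jk})+\frac{\sigma_j}{n}\sum_i W_{ijk}$, the first sum is an average of centred variables bounded by $\lVert\psi\rVert_\infty 2^{j/2}$ with variance proxy $\lVert f\rVert_\infty$, handled by Bernstein's inequality, while the second is an average of centred sub-exponential Laplace variables, handled by a Bernstein-type bound for sub-exponential summands. The key point is that the threshold $\threshold=\gamma\, j^{\nu+1/2} n^{-1/2}(1\vee 2^{j/2}/\alpha)$ is calibrated so that both the sampling deviation $\asymp n^{-1/2}$ and the noise deviation $\asymp \sigma_j/\sqrt n \asymp j^{\nu}2^{j/2}/(\alpha\sqrt n)$ are smaller than $\threshold$ by a factor of at least order $\sqrt j$; consequently $\P(\lvert\betahat_{jk}-\beta_{jk}\rvert\geq \tfrac{K}{2}\threshold)\lesssim \exp(-c(\gamma)\, j)$ in both the sampling-dominated ($2^{j/2}\le\alpha$) and the noise-dominated ($2^{j/2}>\alpha$) regime, and taking $\gamma$ large (for instance $\gamma\geq r(N+1)$) makes $c(\gamma)$ large enough to beat the $O(2^j)$ coefficients at level $j$ and the weights $2^{j(r/2-1)}$ after a union bound over $\lowreslev\le j\le\uppreslev$.

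Armed with this deviation bound, I would carry out the standard four-region split of $\E\lvert\betatilde_{jk}-\beta_{jk}\rvert^r$ according to whether $\lvert\betahat_{jk}\rvert$ and $\lvert\beta_{jk}\rvert$ are large or small relative to the threshold; the deviation inequality renders the \emph{wrong-decision} terms negligible, leaving an \emph{oracle}-type bound of order $\min(\lvert\beta_{jk}\rvert,\threshold)^r$. Summing over $k$, then over $j$, using the Besov constraint $\sum_j(2^{j(s+1/2-1/p)}\lVert\beta_{j\cdot}\rVert_p)^q\leq L^q$, and splitting the range of $j$ at the level where $2^{j/2}/\alpha$ crosses $1$ and further at the boundaries $p=r/(s+1)$ and $p=r/(2s+1)$, produces exactly the three-regime rate $\mathfrak R^\star_{n,\alpha}$ up to the logarithmic factor $(\log n)^C$. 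Since $\lowreslev,\uppreslev$ and $\threshold$ depend only on $n,\alpha$ and $N$ but not on $(s,p,q,L)$, the bound holds uniformly over $\Theta$, which is precisely the claimed adaptivity. The main obstacle I anticipate is the deviation inequality together with the bookkeeping of the threshold calibration: $\threshold$ must simultaneously sit at the minimax-optimal scale in both noise regimes while remaining large enough for the union bound, and reconciling these competing requirements is what forces the two-fold elbow and dictates where each logarithmic factor appears.
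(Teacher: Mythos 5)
Your proposal follows essentially the same route as the paper: the same three-part decomposition into father-level stochastic error, thresholded detail error, and tail bias, the same Bernstein-type concentration for $\betahat_{jk}-\beta_{jk}$ calibrated to the threshold $\threshold$ (the paper applies Bernstein to the combined variables $Z_{ijk}$ rather than to the sampling and Laplace parts separately, a cosmetic difference), the same four-region split of the detail term, and the same level-by-level summation under the Besov constraint with cutoffs producing the three regimes. The plan is sound and matches the paper's proof in all essential respects.
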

The proof of the Theorem is given in Appendix~\ref{SEC:PROOF:UPPER:NONLINEAR}.
Note that both the privacy mechanism and the estimator in Theorem~\ref{THM:UPPER:NONLINEAR} are independent of the quantities $s$, $p$, $q$, and $L$.
Hence, the proposed procedure is adaptive. \section{Discussion}

In this article, we have suggested refined methods for density estimation under the constraint of local $\privpar$-differential privacy. 
By the use of estimators based on wavelet expansions, we have been able to obtain adaptive procedures that obtain the minimax rate of convergence up to an additional logarithmic factor only.
To the best of our knowledge, adaptation to smoothness has not been considered in the framework of private estimation so far.
Moreover, in allowing for general $\L^r$-risk and Besov ellipsoids we have widened the range of results in the privacy framework that has merely focused on $\L^2$-risk and Sobolev ellipsoids until now.

A significant difference between our approach and the one suggested in~Section~5.2.2 of \cite{duchi2018minimax} concerns the privacy mechanism:
Whereas the procedure in \cite{duchi2018minimax} is built on a rather sophisticated sampling strategy aiming at the perturbation of empirical Fourier coefficients, our privacy mechanism consists in a simple Laplace perturbation of empirical wavelet coefficients.
In \cite{duchi2018minimax} it has been observed (see the last paragraph of Section~5.2.2 in that paper) that such an approach is not feasible for the Fourier basis since it would lead to a suboptimal rate (under $\L^2$-risk) of order $(n\privpar^2)^{-2s/(2s+3)}$ over Sobolev ellipsoids of smoothness $s$ instead of the optimal rate $(n\privpar^2)^{-s/(s+1)}$.
A heuristic explanation for the easier accessibility of the problem by means of wavelet bases is given by their well-known \emph{localisation} properties in contrast to the \emph{global} Fourier basis.

Note that wavelet methods in the non-private framework do not necessarily suffer from a logarithmic loss in the rate (see, for instance, \cite{donoho1996density} where an additional logarithmic loss only appears in the dense zone).
The fact that we encounter this type of loss in our private scenario is caused be the term $j^\nu$ in the definition of the privacy mechanism~\eqref{EQ:DEF:ZIJK} and might be explained by the pointwise nature of the $\privpar$-differential privacy constraint.
The problem whether and if so, how such logarithmic losses might be circumvented remains open and provides an interesting direction for future research.

Finally, let us sketch the connection between local private estimation in the non-interactive setup and statistical inverse problems, in particular, density deconvolution:
On the one hand, in density deconvolution, the statistician is given a noisy sample $Z_1,\ldots,Z_n$ where $Z_i = X_i + \epsilon_i$ for $X_i \sim f$ and $\epsilon_i \sim q$.
Here, the density $f$ is the quantity of interest and $q$ an error density which is (at least in the overwhelming part of the literature) supposed to be known.
In this setup, the $Z_i$ are distributed according to the density $g$ where
\begin{equation}\label{EQ:DECONVOLUTION}
    g(\cdot) = (K_qf)(\cdot) \defeq \int q(\cdot - x)f(x) \dd x
\end{equation}
is the convolution of $f$ with the error density $q$.
It is well-known that the difficulty of reconstructing $f$ from the sample $Z_1,\ldots,Z_n$ is linked with the \emph{degree of ill-posedness} of the inverse problem $g=K_qf$.
The latter can be described either in terms of the sequence $(\lambda_k^2)$ of eigenvalues of $K_q^\ast K_q$ ($K_q^\ast$ denotes the adjoint operator of the linear operator $K_q$) or in terms of the decay of the Fourier transform of the error density $q$.
General inverse problems of the form $Kf=g$ have been thoroughly investigated in \cite{kerkyacharian2007needlet} in the framework of a Gaussian white noise model.
For Besov smooth signals $f$ and $\lvert \lambda_k\rvert \asymp k^{-\rho}$ for some $\rho >0$, \cite{kerkyacharian2007needlet} derived adaptive rates of estimation of $f$ proportional to

\[ \left\{
\begin{array}{ll}
    (\log n)^C n^{-\frac{rs}{2(s+\rho)+1}}, & \text{if } s > (\rho+\frac 12)(\frac{r}{p} - 1),\\
    (\log n)^C n^{- \frac{r(s-1/p+1/r)}{2(s-1/p + \rho)+1}}, & \text{if } s \leq (\rho+\frac 12)(\frac{r}{p} - 1).
\end{array}
\right.\]

On the other hand, the statistician who is given the non-interactive privatised sample $Z_1,\ldots,Z_n$ is confronted with the problem of recovering $f$ from a sample from the mixture density
\[ g(\cdot) = (Kf)(\cdot) \defeq \int q^{Z|X=x}(\cdot) f(x) \dd x, \]
which is a special instance of an inverse problem and strongly resembles \eqref{EQ:DECONVOLUTION}.
In contrast to \eqref{EQ:DECONVOLUTION}, however, the operator $K$ is now not \emph{a priori} given as a component of the problem but constitutes rather a part of its solution.
In the local differential privacy framework, the statistician should select the operator $K$, corresponding to the choice of a privacy mechanism, subject to the two following constraints.
First, the condition~\eqref{EQ:DEF:DIFF:PRIV:DENSITY} concerning $\alpha$-differential privacy must hold.
Second, the least possible amount of information should be smoothed out by the operator $K$.
More precisely, denoting with $\rho$ the degree of ill-posedness as above, the proofs of the lower bounds suggest that the least admissible value for $\rho$ is $1/2$.
Our privacy mechanisms, that is, our choices of $K$ satisfy both constraints by leading to an overall estimation procedure that is nearly minimax.
We emphasize that the above interpretation of the local differential private estimation problem does not rule out privacy mechanisms that add noise directly to the random variables $X_1,\ldots,X_n$ in principle.
As already mentioned, \cite{duchi2018minimax} have noted that adding Laplace noise directly to the observations cannot lead to an optimal procedure.
Indeed, the convolution operator in this case has degree of ill-posedness corresponding to $\rho = 2$ which yields a too slow rate.
 
\appendix

\section{Proofs of Section~\ref{SEC:LOWER}}\label{SEC:PROOFS:LOWER}

We distinguish in the sequel the dense case and the sparse case that require different explicit constructions.
However, for both proofs of the lower bounds we need the existence of a function $\fzero$ with the following properties (see \citep{haerdle1998wavelets}):
\begin{itemize}
    \item $\fzero$ is a probability density,
    \item $J_{spq}(f_0) \leq L/2$,
    \item $\supp(\fzero) \subseteq [-T,T]$,
    \item $\fzero \equiv c_0 > 0$ on some interval $[a,b]$.
\end{itemize}
In particular, $\fzero \in \DBesov{s}{p}{q}(L/2,T)$.

The main tool in the proof of the lower bounds is adapted from \citep{duchi2018minimax}. It allows to reduce the problem to the study of the likelihoods of the non-privatized data and quantifies the loss of information in the process. 

Suppose that we are given a finite indexed family of distributions $\{P_\nu, \nu\in \mathcal{V}\}$.
Let $V$ denote a random variable that is uniformly distributed over $\mathcal{V}$.
Conditionally on $V=\nu$, suppose we sample a random vector $(X_1,\ldots,X_n)$ according to the product measure $P_\nu^{\otimes n}=P_\nu\otimes\ldots\otimes P_\nu$.
Suppose that we draw an $\alpha$-locally private sample $Z_1,\ldots,Z_n$ according to a channel $Q$.
Conditioned on $V=\nu$, $(Z_1,\ldots,Z_n)$ is distributed according to the measure $M_\nu^n$ given by
$$M_\nu^n(S) \defeq \int Q^n(S\mid x_1,\ldots,x_n)\mathrm{d}P_\nu^{\otimes n}(x_1,\ldots,x_n)\quad \text{for } S\in\sigma(\Zc^n),$$
where $Q^n(\cdot\mid x_1,\ldots,x_n)$ denotes the joint distribution on $\Zc^n$ of the private sample $Z_{1:n}$ conditioned on $X_{1:n}=x_{1:n}$.
In this setup, we have the following inequality.

\begin{lemma}\label{LEM:LOWER:EX:DUCHI}[Based on \citep{duchi2018minimax}, Theorem~1] Let $\alpha \geq 0$.
For any $\alpha$-locally differentially private conditional distribution $Q$ and any $\nu,\nu'\in\mathcal{V}$, $\nu\neq \nu'$, we have in the above setting
\begin{equation*}
    \KL (M_\nu^n, M_{\nu^\prime}^n) + \KL (M_{\nu^\prime}^n, M_\nu^n) \leq 4n(e^\alpha -1)^2  \TV^2(P_\nu, P_{\nu^\prime}).
\end{equation*}
\end{lemma}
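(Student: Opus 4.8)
The plan is to follow the strategy underlying \cite{duchi2018minimax}, Theorem~1: first tensorise over the $n$ observations while respecting the sequential structure of the channel, and then prove a single-observation version of the inequality in which the privacy constraint converts a Kullback--Leibler divergence into a squared total variation distance.

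First I would record the product structure of the private marginals. Conditionally on $V=\nu$ the variables $X_1,\ldots,X_n$ are i.i.d.\ $P_\nu$, and for the sequentially interactive channel $Z_i$ is drawn from $Q_i(\cdot \mid X_i, Z_{1:i-1})$. Since $X_i$ is independent of $Z_{1:i-1}$ given $V=\nu$, the conditional law of $Z_i$ given $Z_{1:i-1}=z_{1:i-1}$ is the mixture
\[
M_{\nu,i}(\cdot \mid z_{1:i-1}) = \int Q_i(\cdot \mid x, z_{1:i-1})\,\dd P_\nu(x),
\]
so that $M_\nu^n$ factorises as $\prod_i M_{\nu,i}(\cdot \mid z_{1:i-1})$. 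The chain rule for the Kullback--Leibler divergence then gives
\[
\KL(M_\nu^n, M_{\nu'}^n) = \sum_{i=1}^n \E_{z_{1:i-1}\sim M_\nu^{i-1}}\big[\KL\big(M_{\nu,i}(\cdot \mid z_{1:i-1}),\, M_{\nu',i}(\cdot \mid z_{1:i-1})\big)\big],
\]
together with the analogous identity for the reversed divergence. The crucial point is that for each fixed $z_{1:i-1}$ the kernel $x\mapsto Q_i(\cdot \mid x, z_{1:i-1})$ is still $\alpha$-differentially private, so everything reduces to a single-observation bound that is uniform in the conditioning variables.

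For the single-observation step, write $m_\nu$ for the density of the mixture $M_\nu = \int Q(\cdot \mid x)\,\dd P_\nu(x)$ and set $\underline q(z)=\inf_x q(z\mid x)$, $\overline q(z)=\sup_x q(z\mid x)$. The symmetrised divergence equals $\int (m_\nu-m_{\nu'})\log(m_\nu/m_{\nu'})\,\dd z$, and the elementary inequality $(a-b)\log(a/b)\le (a-b)^2/\min(a,b)$ reduces matters to controlling $(m_\nu-m_{\nu'})^2/\min(m_\nu,m_{\nu'})$. Decomposing the signed measure $P_\nu-P_{\nu'}$ into its positive and negative parts, each of total mass $\TV(P_\nu,P_{\nu'})$, yields $|m_\nu(z)-m_{\nu'}(z)|\le (\overline q(z)-\underline q(z))\,\TV(P_\nu,P_{\nu'})$; the privacy constraint \eqref{EQ:DEF:DIFF:PRIV:DENSITY} then forces $\overline q(z)\le e^\alpha\underline q(z)$, while $\min(m_\nu,m_{\nu'})\ge\underline q(z)$ by averaging. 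Combining these bounds the integrand by $(e^\alpha-1)^2\,\underline q(z)\,\TV^2(P_\nu,P_{\nu'})$, and since $\int \underline q(z)\,\dd z\le 1$ this integrates to a constant multiple of $(e^\alpha-1)^2\TV^2(P_\nu,P_{\nu'})$.

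Finally I would substitute the single-observation estimate into the tensorised identity; the right-hand side is independent of the conditioning $z_{1:i-1}$, so summing the $n$ identical contributions produces the factor $n$ and yields the claimed inequality, the constant $4$ being a convenient (non-tight) choice. I expect the single-observation estimate to be the main obstacle, and within it the delicate point is extracting the total variation distance: one must combine the Jordan decomposition with the \emph{pointwise} privacy bound $\overline q\le e^\alpha\underline q$ so that the factor $(e^\alpha-1)$ — rather than a cruder $e^\alpha$ — appears, which is precisely what makes the bound vanish as $\alpha\to 0$. Handling the sequential interactivity cleanly in the chain-rule step is the other place requiring care.
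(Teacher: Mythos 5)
The paper offers no proof of this lemma --- it is imported directly from \citep{duchi2018minimax}, Theorem~1 --- and your reconstruction is correct, following the same tensorisation-plus-single-observation strategy as that source: the chain rule over the sequentially interactive channel, then the conversion of the symmetrised Kullback--Leibler divergence into a squared total variation via the pointwise bound $\overline q \leq e^\alpha \underline q$. Your single-observation estimate in fact yields constant $1$ rather than $4$ (and the chain-rule step costs at most a further factor $2$), so the claimed inequality follows with room to spare.
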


Lemma \ref{LEM:LOWER:EX:DUCHI} quantifies the property that $\alpha$-differential privacy acts as a contraction on the space of probability measures.

\subsection{Proof of Proposition~\ref{PROP:LOWER:REGULAR}}\label{SEC:PROOF:LOWER:REGULAR}

It is sufficient to prove the lower bound for $n$ sufficiently large (the remainining finitely many $n$ might merely further reduce the value of the numerical constant $C$).
Let $\fzero$ be the function introduced above.
For fixed $j$ (the choice of which will be specified later) define $\Ic_j$ as the maximal subset of $\Z$ such that $\supp(\psi_{jk}) \subset [a,b]$ and $\supp(\psi_{jk}) \cap \supp(\psi_{jk^\prime}) = \emptyset$ if $k,k^\prime \in \Ic_j$ with $k \neq k^\prime$.
Note that $N_j \defeq \vert \Ic_j \vert \asymp 2^j$.
Define
\begin{align*}
    \Fc = \{ f_\theta \colon f_\theta = \fzero + \gamma \sum_{k\in\Ic_j} \theta_k \psi_{jk} \text{ and }\theta=(\theta_k) \in \Theta \defeq \{ 0,1 \}^{N_j} \}
\end{align*}
where $\gamma = c  (n(e^\privpar-1)^2)^{-\frac{2s+1}{2(2s+2)}}$ for $c$ sufficiently small and $2^j\asymp (n(e^\privpar-1)^2)^\frac{1}{2s+2}$.
For $c$ sufficiently small, it holds $\gamma 2^{j/2} \Vert \psi \Vert_\infty\leqslant c_0$, which ensures that $f_\theta$ is non-negative for all $\theta \in \Theta$.
One can easily check that $\int f_\theta = 1$ and $\supp(f_\theta) \subseteq [-T,T]$ for all $\theta \in \Theta$.
Moreover, by the definition of $\gamma$, the choice of $j$ and the equivalence of norms, we have
\begin{align*}
    \Vert f_\theta \Vert_{spq} &\leq \Vert \fzero \Vert_{spq} + c_1\gamma 2^{j(s+1/2-1/p)} \big( \sum_{k\in\Ic_j} \vert \theta_k \vert^p \big)^{1/p}\\
    &\leq \frac{L}{2} + c_1\gamma 2^{j(s+1/2)}\leq \frac{L}{2}+Ccc_1 \leq L,
\end{align*}
where the last inequality holds for $c$ sufficiently small.
Hence, $\Fc \subset \DBesov{s}{p}{q}(L,T)$ and
\[\sup_{f \in \DBesov{s}{p}{q}(L,T)} \E_f \Vert \ftilde - f \Vert_r^r \geq \sup_{f \in \Fc} \E_f\Vert \ftilde - f \Vert_r^r = \max_{\theta\in\Theta} \E_\theta \Vert \ftilde - f_\theta \Vert_r^r .\]
Denoting by $\Delta_{jk}$ the support of $\psi_{jk}$, it holds for any estimator $\ftilde$ of $f$ that
\begin{align*}
    \E_\theta \Vert \ftilde - f_\theta \Vert_r^r &= \E_\theta \int
    \vert \ftilde(x) - f_\theta(x) \vert^r \dd x\\
    &\geq \sum_{k\in\Ic_j} \E_\theta \int_{\Delta_{jk}} \vert \ftilde(x) - f_\theta(x) \vert^r \dd x\\
    &= \sum _{k\in\Ic_j} \E_\theta \int_{\Delta_{jk}} \vert \ftilde(x) - \fzero(x) - \gamma \theta_k \psi_{jk}(x) \vert^r \dd x
\end{align*}
since $f_\theta\equiv g_{\theta_k}:=f_0+\gamma\theta_k\psi_{jk}$ on $\Delta_{jk}$.
Set $$\Vert\ftilde- g_{\theta_k}\Vert_{r,\Delta_{jk}}^r=\int_{\Delta_{jk}} \vert \ftilde(x) - g_{\theta_k}(x) \vert^r \dd x = \int_{\Delta_{jk}} \vert \ftilde(x) - \fzero(x) - \gamma \theta_k \psi_{jk}(x) \vert^r \dd x,$$ and $\thetacheck_k = \argmin_{\theta \in \{0,1\} } \Vert\ftilde- g_\theta\Vert_{r,\Delta_{jk}}$. It follows from the triangle inequality that
\begin{align*}
2\Vert\ftilde- g_{\theta_k}\Vert_{r,\Delta_{jk}}&\geq \Vert\ftilde- g_{\theta_k}\Vert_{r,\Delta_{jk}}+\Vert\ftilde- g_{\thetacheck_k}\Vert_{r,\Delta_{jk}}\\
&\geq \Vert g_{\theta_k}- g_{\thetacheck_k}\Vert_{r,\Delta_{jk}}\\
&=\gamma\vert \theta_k-\thetacheck_k\vert \Vert\psi_{jk}\Vert_r.
\end{align*}
Thus,
\begin{align*}
    \E_\theta \Vert \ftilde - f_\theta \Vert_r^r &\geq \frac{\gamma^r}{2^r} \sum_{k\in\Ic_j} \E_\theta [\vert\thetacheck_k - \theta_k\vert^r] \Vert \psi_{jk} \Vert_r^r\\
    &= \frac{\gamma^r}{2^r} \Vert \psi_{j1} \Vert_r^r \cdot \E_\theta [ \dhamming(\thetacheck, \theta)],
\end{align*}
where $\dhamming$ denotes the Hamming distance.
Therefore,
\begin{equation*}
    \sup_{f \in \DBesov{s}{p}{q}(L,T)} \E_f \Vert \ftilde - f \Vert_r^r \geq \max_{\theta\in\Theta} \E_\theta \Vert \ftilde - f_\theta \Vert_r^r \geq \frac{\gamma^r}{2^r} \Vert \psi_{j1} \Vert_r^r \cdot \inf_{\thetatilde} \max_{\theta\in\Theta} \E_\theta [ \dhamming(\thetatilde, \theta)].
\end{equation*}
In order to apply Lemma~\ref{LEM:ASSOUAD:KL}, we need to  bound the Kullback-Leibler divergence between two different distributions $M^n_\theta$ and $M_{\theta'}^n$ of the private sample $(Z_1,\ldots,Z_n)$ resulting from the sample $X_1,\ldots,X_n$ if, for all $i\in\llbr 1,n\rrbr$, $X_i$ is distributed according to $f_\theta$, $f_{\theta^\prime}$ with $\dhamming(\theta, \theta^\prime) = 1$.
We write $X_i\sim \P_\theta$ if $X_i$ has density $f_\theta$.
Using Lemma~\ref{LEM:LOWER:EX:DUCHI} we obtain for any channel providing local $\privpar$-differential privacy that
\begin{equation*}
    \KL(M_\theta^n,M_{\theta^\prime}^n) \leq 4(e^\alpha-1)^2n \TV^2(\P_\theta, \P_{\theta^\prime}).
\end{equation*}
Now, since $\dhamming(\theta, \theta^\prime)=1$ and $\theta, \theta^\prime\in\Theta$, there exists $k_0\in\Ic_j$ such that
\begin{align*}
    \TV(\P_\theta, \P_{\theta^\prime}) &= \frac{1}{2}\int \lvert f_\theta(x)-f_{\theta'}(x) \rvert \dd x= \frac{1}{2} \int \lvert\gamma \sum_{k\in \Ic_j}(\theta_k-\theta_k') \psi_{jk}(x)\rvert \dd x\\
    &=\frac{\gamma}{2} \int \lvert  \psi_{jk_0}(x)\rvert \dd x=\frac{1}{2}2^{-j/2} \gamma \lVert \psi \rVert_1,
\end{align*}
which implies that
\begin{equation*}
    \KL(M_\theta^n, M_{\theta^\prime}^n) \leq (e^\alpha-1)^2\Vert \psi\Vert_1^2n2^{-j}\gamma^2\leq c^2\Vert\psi\Vert_1^2 C <\infty.
\end{equation*}
Applying Lemma~\ref{LEM:ASSOUAD:KL} from the appendix with $N=N_j \gtrsim 2^j$  implies
\begin{align*}
    \sup_{f \in \DBesov{s}{p}{q}(L,T)} \E_f \Vert \ftilde - f \Vert_r^r &\gtrsim \frac{\gamma^r}{2^r}2^{j(r/2-1)}  \Vert \psi \Vert_r^r \cdot  2^j  \\
    &\gtrsim  (n(e^\privpar-1)^2)^{-\frac{rs}{2s+2}}.
\end{align*}
This implies the statement of the proposition since $\ftilde$ and the channel distribution were arbitrary.

\subsection{Proof of Proposition~\ref{PROP:LOWER:SPARSE}}\label{SEC:PROOF:LOWER:SPARSE}

We consider $\fzero,\psi, \Ic_j$ and $N_j$ as in the proof of Proposition~\ref{PROP:LOWER:REGULAR}, but consider now the set
\begin{equation*}
    \Fc = \{ f_k=f_0 + \gamma \cdot \psi_{jk}, k \in \Ic_j \}\cup\{f_0\},
\end{equation*}
where $j$ is chosen such that $2^j \simeq \big( \frac{n(e^\privpar-1)^2}{\log(n (e^\privpar-1)^2 )} \big)^{\frac{1}{2(s+1-1/p)}}$ and  $\gamma = c2^{-j(s+1/2-1/p)}$ for $c$ sufficiently small.
Let us first check that this choice of $j$ and $\gamma$ guarantees that $\Fc \subset \DBesov{s}{p}{q}(L,T)$.
First, we have $\fzero \in \DBesov{s}{p}{q}(L,T)$ and one can easily check that $\int f_k = 1$ and $\supp(f_k) \subseteq [-T,T]$ for all $k \in \Ic_j$.
Then, for any $k \in \Ic_j$, we have
\begin{equation*}
    f_k \geq c_0 - \gamma \Vert \psi_{jk} \Vert_\infty \geq c_0 - c2^{-j(s+1/2-1/p)} 2^{j/2} \Vert \psi \Vert_\infty  \geq  c_0 - c\Vert \psi \Vert_\infty\geq 0
\end{equation*}
for $c$ sufficiently small.
Furthermore, for any $k \in \Ic_j$,
\begin{equation*}
    \Vert f_k \Vert_{spq} \leq \Vert f_0 \Vert_{spq} + \gamma \Vert \psi_{jk} \Vert_{spq} \leq L/2 + c2^{-j(s+1/2-1/p)} \Vert \psi_{jk} \Vert_{spq} \leq L/2 + cc_1 \leq L
\end{equation*}
for $c$ sufficiently small.
Hence, $\Fc \subset \DBesov{s}{p}{q}(L,T)$ and
$$\sup_{f \in \DBesov{s}{p}{q}(L,T)} \E_f \Vert \ftilde - f \Vert_r^r \geq \sup_{f \in \Fc} \E_f\Vert \ftilde - f \Vert_r^r.$$
Now, we show that for $k,k'\in\Ic_j$, $k\neq k'$, the hypotheses $f_k$ and $f_{k^\prime}$, as well as the hypotheses $f_k$ and $f_0$, are sufficiently separated in the sense of Lemma~\ref{LEM:MULTIPLE:HYP:KL}.
For such $k, k^\prime$ we have:
\begin{align*}
    \Vert f_k - f_{k^\prime} \Vert_r^r &\geq \Vert f_k - f_{0} \Vert_r^r = \gamma^r 2^{rj(1/2-1/r)} \cdot \Vert \psi \Vert_r^r = c^r 2^{-rj(s+1/2-1/p)} 2^{rj(1/2-1/r)}\cdot \Vert \psi \Vert_r^r \\
    &= c^r \Vert \psi \Vert_r^r  2^{-jr(s-1/p+1/r)}\\
    &\geq C \bigg( \frac{\log (n (e^\privpar-1)^2)}{n (e^\privpar-1)^2} \bigg)^{r \cdot \frac{s-1/p+1/r}{2(s+1-1/p)}}.
\end{align*}
For $k\in\{0\}\cup\Ic_j$, let $M_k^n$ be the distribution of the private sample $(Z_1,\ldots,Z_n)$ resulting from the sample $X_1,\ldots,X_n$ if for all $i\in\llbr 1,n\rrbr$ $X_i$ is distributed according to $f_k$.
For all $k\in\Ic_j$ we have $M_k^n\ll M_0^n$. It remains to bound the quantity $\frac{1}{N_j}\sum_{k\in\Ic_j}\KL(M_k^n,M_0^n).$
We write $X_i\sim \P_k$ if $X_i$ has density $f_k$, $k\in\{0\}\cup\Ic_j$.
First consider  the total variation distance between $\P_k$ and $\P_0$ for $k\in\Ic_j$ :
\begin{align*}
    \TV (\P_k, \P_0) &=\frac{1}{2}\int \vert f_k-f_0 \vert=\frac{\gamma}{2}\int \vert \psi_{jk}\vert=\frac{\gamma}{2}2^{-j/2} \Vert \psi \Vert_1\\
    &= \frac{c}{2}\Vert \psi \Vert_1 2^{-j(s-1/p + 1)},
\end{align*}
and thus
\begin{equation*}
    \TV^2(\P_k, \P_0) \leq \frac{c^2}{4} \Vert \psi \Vert_1^2 C\cdot \frac{\log(n(e^\privpar-1)^2)}{n (e^\privpar-1)^2} .
\end{equation*}

Applying Lemma~\ref{LEM:LOWER:EX:DUCHI} gives
\begin{equation}\label{EQ:BOUND:KL}
    \frac{1}{N_j}\sum_{k\in\Ic_j}\KL(M_k^n, M_0^n) \leq c^2 \Vert \psi \Vert_1^2 C \cdot \log(n(e^\privpar-1)^2).
\end{equation}
Now, $\log(N_j)\geq \log(C2^j)$ and
\begin{equation*}
  \log (C2^j )>  \frac{\log (n(e^\privpar-1)^2)}{2(s-1/p+1)}(1+o(1)) \geq \frac{1}{2}\frac{\log (n(e^\privpar-1)^2)}{2(s-1/p+1)}
\end{equation*}
for $n$ sufficiently large, say $n \geq n_0$.
Putting this estimate into \eqref{EQ:BOUND:KL} yields
\begin{equation*}
    \frac{1}{N_j}\sum_{k\in\Ic_j}\KL(M_k^n, M_0^n) \leq C \log (N_j)
\end{equation*}
for $n \geq n_0$ and $C<1/8$ for $c$ sufficiently small.
We can then apply Lemma~\ref{LEM:MULTIPLE:HYP:KL}, which yields for $n \geq n_0$ that
\begin{equation*}
        \sup_{f \in \DBesov{s}{p}{q}} \E_f \Vert \ftilde - f \Vert_r^r \geq C \bigg( \frac{\log (n(e^\privpar-1)^2)}{n(e^\privpar-1)^2} \bigg)^{r \cdot \frac{s-1/p+1/r}{2(s-1/p) + 2}}.
    \end{equation*}
The statement of the proposition follows since both the estimator $\ftilde$ and the privacy mechanism considered were arbitrary.

\subsection{Further auxiliary results for the lower bound proofs}

The following lemma is a Kullback-Leibler version of Assouad's lemma.
As above, we denote by $\dhamming$ the Hamming distance, that is, $\dhamming(\theta, \theta^\prime) = \sum_{i=1}^d \1_{ \{\theta_i \neq \theta_i^\prime\}}$ for $\theta, \theta^\prime \in \R^d$.

\begin{lemma}[\citep{tsybakov2009introduction}, p.~118, Theorem~2.12]\label{LEM:ASSOUAD:KL}
Denote with $\Theta = \{ 0,1 \}^N$ the set of all binary sequences of length $N$.
Let $\{ \P_\theta : \theta \in \Theta \}$ be a set of $2^N$ probability measures on some measurable space $(\Xc,\As)$ and let the corresponding expectations be denoted by $\E_\theta$.
Then
\begin{equation*}
    \inf_{\thetatilde} \max_{\theta \in \Theta} \E_\theta [\dhamming(\theta,\thetatilde)] \geq \frac{N}{2} \max \{ \exp(-\beta)/2, 1-\sqrt{\beta/2} \}
\end{equation*}
provided that $\KL(\P_\theta, \P_{\theta^\prime}) \leq \beta < \infty$ for all $\theta, \theta^\prime \in \Theta$ with $\dhamming(\theta, \theta^\prime) = 1$.
\end{lemma}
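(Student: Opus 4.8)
The plan is to run the classical Assouad argument, turning the multivariate Hamming risk into $N$ two-point testing problems indexed by the edges of the cube $\Theta=\{0,1\}^N$. First I would record that the Hamming loss is additive over coordinates, $\dhamming(\theta,\thetatilde)=\sum_{i=1}^N\1_{\{\theta_i\neq\thetatilde_i\}}$, so that lower bounding the maximum over $\theta$ by the uniform average gives
\[
\max_{\theta\in\Theta}\E_\theta[\dhamming(\theta,\thetatilde)]\;\geq\;\frac{1}{2^N}\sum_{\theta\in\Theta}\sum_{i=1}^N\P_\theta(\thetatilde_i\neq\theta_i)\;=\;\sum_{i=1}^N\frac{1}{2^N}\sum_{\theta\in\Theta}\P_\theta(\thetatilde_i\neq\theta_i).
\]
Because the loss is coordinatewise and $\theta_i\in\{0,1\}$, the infimum over all estimators is attained within the class of $\{0,1\}$-valued estimators: rounding any coordinate to the nearer of $0,1$ never increases $\1_{\{\theta_i\neq\thetatilde_i\}}$, so I may assume each $\thetatilde_i$ is $\{0,1\}$-valued without weakening the bound.

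Next, for each fixed coordinate $i$ I would pair every $\theta$ with the vertex $\theta^{\oplus i}$ obtained by flipping its $i$-th bit, so that $\dhamming(\theta,\theta^{\oplus i})=1$. Splitting the $2^N$ summands into the resulting $2^{N-1}$ pairs, each pair contributes $\P_\theta(\thetatilde_i\neq\theta_i)+\P_{\theta^{\oplus i}}(\thetatilde_i\neq\theta^{\oplus i}_i)$. Since $\theta_i$ and $\theta^{\oplus i}_i$ are the two distinct elements of $\{0,1\}$, this is exactly the sum of the two error probabilities of the test $\thetatilde_i$ for deciding between $\P_\theta$ and $\P_{\theta^{\oplus i}}$, and is therefore at least the minimal testing error $1-\TV(\P_\theta,\P_{\theta^{\oplus i}})=\int\min(p_\theta,p_{\theta^{\oplus i}})\,\dd\mu$, where $p_\theta$ is the density with respect to a common dominating measure $\mu$ and the edge assumption grants $\KL(\P_\theta,\P_{\theta^{\oplus i}})\leq\beta$.

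It remains to bound this affinity below in terms of $\beta$ along the two routes giving the two terms inside the maximum. Pinsker's inequality yields $\TV(\P_\theta,\P_{\theta^{\oplus i}})\leq\sqrt{\KL(\P_\theta,\P_{\theta^{\oplus i}})/2}\leq\sqrt{\beta/2}$, hence $\int\min\geq1-\sqrt{\beta/2}$. For the other term, Jensen's inequality applied to the convex exponential gives the affinity bound $\int\sqrt{p_\theta p_{\theta^{\oplus i}}}\,\dd\mu=\E_\theta[\sqrt{p_{\theta^{\oplus i}}/p_\theta}]\geq\exp(-\tfrac12\KL(\P_\theta,\P_{\theta^{\oplus i}}))\geq\exp(-\beta/2)$, while Cauchy--Schwarz gives $(\int\sqrt{p_\theta p_{\theta^{\oplus i}}}\,\dd\mu)^2\leq\int\min\cdot\int\max=\int\min\,(2-\int\min)\leq2\int\min$; together these force $\int\min\geq\tfrac12\exp(-\beta)$. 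Inserting whichever bound is larger, each of the $2^{N-1}$ pairs in each of the $N$ coordinates contributes at least $\max\{\tfrac12 e^{-\beta},1-\sqrt{\beta/2}\}$, and the normalisation $2^{-N}\cdot2^{N-1}=\tfrac12$ produces the announced prefactor $N/2$.

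The pairing and averaging are pure bookkeeping; the only points needing genuine care are the reduction to $\{0,1\}$-valued estimators and the second affinity estimate $\int\min\geq\tfrac12 e^{-\KL}$, which is where the Cauchy--Schwarz/Jensen combination does the actual work. This is precisely Theorem~2.12 of \citep{tsybakov2009introduction}, so an alternative is simply to invoke that reference.
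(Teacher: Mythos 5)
Your proof is correct. The paper does not prove this lemma itself but imports it verbatim from \citep{tsybakov2009introduction} (Theorem~2.12), and your argument is precisely the standard proof given there: the Assouad reduction of the Hamming risk to $N$ families of two-point tests along the edges of the cube, followed by the bound $\int\min(p_\theta,p_{\theta'})\geq\max\{\tfrac12 e^{-\KL},\,1-\sqrt{\KL/2}\}$ obtained from the Jensen/Cauchy--Schwarz affinity estimate and Pinsker's inequality, with the rounding to $\{0,1\}$-valued coordinate estimators handled correctly.
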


For the lower bound in the sparse case we need the following lemma taken from~\citep{tsybakov2009introduction}.

\begin{lemma}[\citep{tsybakov2009introduction}, p.~101, Theorem~2.7]\label{LEM:MULTIPLE:HYP:KL}
Assume that $M \geq 1$ and suppose that $\Theta$ contains elements $\theta_0, \theta_1,\ldots, \theta_M$ such that:
\begin{enumerate}[label=(\roman*)]
    \item $d(\theta_j, \theta_k) \geq 2\Psi > 0$, for all $0 \leq j < k \leq M$,
    \item $\P_j \ll \P_0$, for all $j= 1, \ldots, M$, and
    \begin{equation*}
        \frac{1}{M} \sum_{j=1}^M \KL(\P_j, \P_0) \leq \beta \log M
    \end{equation*}
    with $0 < \beta < 1/8$ and $\P_j = \P_{\theta_j}$, $j = 0,1,\ldots,M$. Then
    \begin{equation*}
        \inf_{\widetilde \theta} \sup_{\theta \in \Theta} \E_\theta(d^r(\widetilde \theta, \theta)) \geq c(\beta) \Psi^r.
    \end{equation*}
\end{enumerate}
\end{lemma}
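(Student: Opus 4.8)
The plan is to recover the classical argument behind Tsybakov's Theorem~2.7: reduce the estimation lower bound to a problem of testing among the $M+1$ hypotheses $\theta_0,\ldots,\theta_M$, and then control the minimal testing error by a Fano-type information inequality fed by the Kullback-Leibler bound~(ii). Throughout, $\thetatilde$ denotes an arbitrary estimator and $\P_j = \P_{\theta_j}$.

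\textbf{Step 1 (reduction to testing).} First I would pass from the moment of the loss to a misclassification probability. By Markov's inequality,
\begin{equation*}
\E_{\theta_j}\, d^r(\thetatilde, \theta_j) \geq \Psi^r \, \P_j\big( d(\thetatilde, \theta_j) \geq \Psi \big)
\end{equation*}
for every $j$. Introducing the minimum-distance test $\psi^\ast = \argmin_{0 \leq k \leq M} d(\thetatilde, \theta_k)$, the separation assumption~(i) guarantees that the true index is recovered whenever $\thetatilde$ lies within $\Psi$ of it: if the data follow $\theta_j$ but $\psi^\ast = k \neq j$, then $d(\thetatilde, \theta_k) \leq d(\thetatilde, \theta_j)$, so the triangle inequality yields $2\Psi \leq d(\theta_j, \theta_k) \leq 2\, d(\thetatilde, \theta_j)$ and hence $d(\thetatilde, \theta_j) \geq \Psi$. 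Therefore $\P_j(d(\thetatilde, \theta_j) \geq \Psi) \geq \P_j(\psi^\ast \neq j)$, and maximising over $j$ and minimising over estimators gives
\begin{equation*}
\inf_{\thetatilde} \sup_{\theta \in \Theta} \E_\theta\, d^r(\thetatilde, \theta) \geq \Psi^r \inf_{\psi} \max_{0 \leq j \leq M} \P_j(\psi \neq j) =: \Psi^r \, p_{e,M},
\end{equation*}
the infimum now running over all tests $\psi$ valued in $\{0,\ldots,M\}$.

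\textbf{Step 2 (information bound).} Next I would lower bound $p_{e,M}$. Bounding the maximum by the average with $V$ uniform on $\{0,\ldots,M\}$, the minimax testing error dominates the Bayes error, which Fano's inequality controls through the mutual information $I(V;X)$ between the label and the observation. Using the variational characterisation of $I$, the convexity of the Kullback-Leibler divergence, and $\P_0$ as reference measure,
\begin{equation*}
I(V;X) \leq \frac{1}{M+1}\sum_{j=0}^M \KL(\P_j, \P_0) \leq \frac{1}{M}\sum_{j=1}^M \KL(\P_j, \P_0) \leq \beta \log M,
\end{equation*}
where the $j=0$ term vanishes and hypothesis~(ii) enters in the final step. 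Fano's inequality then yields $p_{e,M} \geq 1 - (\beta \log M + \log 2)/\log(M+1)$, which is bounded below by a positive constant depending only on $\beta$ as soon as $M$ is large, since $\beta < 1/8 < 1$.

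The step I expect to be most delicate is producing a single constant $c(\beta) > 0$ valid uniformly over all $M \geq 1$, because the crude Fano bound above becomes non-positive for small $M$. To fix this I would replace it by the sharper lower bound on $p_{e,M}$ obtained by optimising over an auxiliary parameter $\tau \in (0,1)$ (the Birgé/Tsybakov refinement, Proposition~2.3 there); this is precisely where the threshold $\beta < 1/8$ is needed to keep the resulting constant strictly positive, and the boundary case $M=1$ would be dispatched separately by a two-point Le Cam argument. Combining the two steps then delivers $\inf_{\thetatilde}\sup_{\theta} \E_\theta\, d^r(\thetatilde, \theta) \geq c(\beta)\,\Psi^r$, as claimed.
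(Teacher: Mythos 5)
This lemma is not proved in the paper at all; it is imported verbatim from the cited reference (Tsybakov, Theorem~2.7), and your reconstruction — reduction to testing via Markov's inequality and a minimum-distance test, followed by a Fano-type bound on the average testing error fed by the Kullback--Leibler hypothesis — is exactly the standard argument given there. Your proposal is correct, including your own observation that the crude Fano bound must be replaced by the refined bound on the minimax probability of error (where the threshold $\beta<1/8$ actually enters) to get a constant $c(\beta)>0$ uniform in $M$.
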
 \section{Proofs of Section~\ref{SEC:UPPER:LINEAR}}\label{SEC:PROOFS:UPPER:LINEAR}

\subsection{Proof of Proposition~\ref{PROP:MATCHED}}

We give the proof for $p > 2$ only, which is based on Statement~\ref{LEM:ROSENTHAL:GEQ} from Lemma~\ref{LEM:ROSENTHAL}. The proof for $1 \leq p \leq 2$ follows similarly using \ref{LEM:ROSENTHAL:LEQ} instead.
We decompose the risk of the estimator $\fhatlin$ into approximation and stochastic error:
\begin{equation*}
    \E \lVert \fhatlin - f \rVert_p^p \leq 2^{p-1} \{ \E \lVert \fhatlin - \E [\fhatlin] \rVert_p^p + \lVert \E [\fhatlin] - f \rVert_p^p \}.
\end{equation*}
The approximation term can be dealt with exactly as in the case of non-private data (see \citep{haerdle1998wavelets}, p.~130),
\begin{equation*}
    \lVert \E [\fhatlin] - f \rVert_p^p \leq C2^{-sp\uppreslev},
\end{equation*}
and it remains to consider the stochastic term.
Putting $\beta^\prime_{-1,k} = \frac{1}{n} \sum_{i=1}^n \phi(X_i-k)$ and $\beta^\prime_{jk} = \frac{1}{n} \sum_{i=1}^n \psi_{jk}(X_i)$, we have
\begin{align*}
    \fhatlin - \E [\fhatlin] &= \sum_{j = -1}^{\uppreslev} \sum_{k \in \Nc_j} \beta^\prime_{jk} \psi_{jk}(x) - \sum_{j=-1}^{\uppreslev} \sum_{k\in \Nc_j} \beta_{jk} \psi_{jk}(x)\\
    &+\sum_{k \in \Nc_{-1}} \left( \frac{1}{n} \sum_{i=1}^n \sigma_{-1} W_{i,-1,k} \right) \psi_{-1,k}(x) + \sum_{j=0}^{\uppreslev} \sum_{k\in \Nc_j} \left( \frac{1}{n} \sum_{i=1}^n \sigmatilde_j W_{ijk} \right) \psi_{jk}(x),
\end{align*}
which can be rewritten as
\begin{align*}
    \fhatlin - \E [\fhatlin] &= \frac{1}{n} \sum_{i=1}^n K_{\uppreslev + 1}(x,X_i) - \E [K_{\uppreslev + 1}(x,X_1)] \\
    &+\sum_{k\in \Nc_{-1}} \left( \frac{1}{n} \sum_{i=1}^n \sigma_{-1} W_{i,-1,k} \right) \psi_{-1,k}(x) + \sum_{j=0}^{\uppreslev} \sum_{k\in \Nc_j} \left( \frac{1}{n} \sum_{i=1}^n \sigmatilde_j W_{ijk} \right) \psi_{jk}(x),
\end{align*}
where $K_{j}(x,y) = 2^j \sum_k \phi(2^j x -k) \bar \phi(2^j y -k)$.
We further decompose
\begin{align*}
    \E \Vert \fhatlin - \E [\fhatlin] \Vert_p^p &\leq 2^{p-1} \E \bigg\Vert \frac{1}{n} \sum_{i=1}^n K_{\uppreslev + 1}(\cdot,X_i) - \E (K_{\uppreslev + 1}(\cdot,X_1)) \bigg\Vert_p^p\\
    &\hspace{-5em}+ 2^{p-1} \E \bigg\Vert \sum_{k\in \Nc_{-1}} \left( \frac{1}{n} \sum_{i=1}^n \sigma_{-1} W_{i,-1,k} \right) \psi_{-1,k} + \sum_{j=0}^{\uppreslev} \sum_{k\in \Nc_j} \left( \frac{1}{n} \sum_{i=1}^n \sigma_j W_{ijk} \right) \psi_{jk} \bigg\Vert_p^p. 
\end{align*}
The first term on the right-hand side is analysed as in the non-private setup (see~\citep{haerdle1998wavelets}, p.~130) leading to the bound
\begin{equation}\label{EQ:MATCHED:STOCH:1}
    \E \bigg\Vert \frac{1}{n} \sum_{i=1}^n K_{\uppreslev + 1}(\cdot,X_i) - \E [K_{\uppreslev + 1}(\cdot,X_1)] \bigg\Vert_p^p \leq C \left( \frac{2^{\uppreslev}}{n} \right)^{p/2}.
\end{equation}
For the remaining term, we have by Tonelli's theorem
\begin{align*}
    \E &\int \bigg\vert \sum_{k\in \Nc_{-1}} \left( \frac{1}{n} \sum_{i=1}^n \sigma_{-1} W_{i,-1,k} \right) \psi_{-1,k}(x) + \sum_{j=0}^{\uppreslev} \sum_{k\in \Nc_j} \left( \frac{1}{n} \sum_{i=1}^n \sigmatilde_j W_{ijk} \right) \psi_{jk}(x) \bigg\vert^p \dd x\\
    &= \frac{1}{n^p} \int_\Delta \E \bigg\vert \sum_{k \in \Nc_{-1}} \left( \sum_{i=1}^n \sigma_{-1} W_{i,-1,k} \right) \psi_{-1,k}(x) + \sum_{j=0}^{\uppreslev} \sum_{k\in \Nc_j} \left( \sum_{i=1}^n \sigmatilde_j W_{ijk} \right) \psi_{jk}(x) \bigg\vert^p \dd x 
\end{align*}
where $\Delta$ is some compact set the length of which depends on $A$ and $T$ only.
The expectation inside the integral is bounded from above by Rosenthal's inequality (Statement~\ref{LEM:ROSENTHAL:GEQ} of Lemma~\ref{LEM:ROSENTHAL}):
\begin{align*}
    \E &\bigg\vert \sum_{k\in \Nc_{-1}} \left( \sum_{i=1}^n \sigma_{-1} W_{i,-1,k} \right) \psi_{-1,k}(x) + \sum_{j=0}^{\uppreslev} \sum_{k\in \Nc_j} \left(  \sum_{i=1}^n \sigmatilde_j W_{ijk} \right) \psi_{jk}(x) \bigg\vert^p\\
    &\lesssim \sum_{k\in \Nc_{-1}} \sum_{i=1}^n \E \big\vert \sigma_{-1}W_{i,-1,k} \psi_{-1,k}(x) \vert^p + \sum_{j=0}^{j_1} \sum_{k \in \Nc_j} \sum_{i=1}^n \E \vert \sigmatilde_{j} W_{ijk} \psi_{jk}(x) \big\vert^p \\
    & + \bigg( \sum_{k\in \Nc_{-1}} \sum_{i=1}^n \E \vert \sigma_{-1}W_{i,-1,k} \psi_{-1,k}(x) \vert^2 + \sum_{j=0}^{j_1} \sum_{k\in \Nc_{j}} \sum_{i=1}^n \E \vert \sigmatilde_{j} W_{ijk} \psi_{jk}(x) \vert^2 \bigg)^{p/2}\\
    &= n \sum_{k\in \Nc_{-1}} \sigma_{-1}^p \vert \psi_{-1,k}(x) \vert^p \E \vert W_{1,-1,k} \vert^p + n \sum_{j=0}^{j_1} \sum_{k\in \Nc_{j}} \sigmatilde_{j}^p \vert \psi_{jk}(x)\vert^p \E \vert W_{1jk}\vert^p \\
    &+ n^{p/2} \bigg( \sum_{k \in \Nc_{-1}} \sigma_{-1}^2 \vert \psi_{-1,k}(x) \vert^2 \E \vert W_{1,-1,k} \vert^2 + \sum_{j=0}^{j_1} \sigmatilde_{j}^2 \sum_{k \in \Nc_{j}} \vert \psi_{jk}(x) \vert^2 \E \vert  W_{1jk} \vert^2 \bigg)^{p/2}\\
    &\asymp n \sum_{k \in \Nc_{-1}} \sigma_{-1}^p \vert \psi_{-1,k}(x) \vert^p + n \sum_{j=0}^{j_1} \sum_{k\in \Nc_{j}} \sigmatilde_{j}^p \vert \psi_{jk}(x)\vert^p\\
    &+ n^{p/2} \bigg( \sum_{k \in \Nc_{-1}} \sigma_{-1}^2 \vert \psi_{-1,k}(x) \vert^2 + \sum_{j=0}^{j_1} \sigmatilde_{j}^2 \sum_{k\in \Nc_{j}} \vert \psi_{jk}(x) \vert^2  \bigg)^{p/2}\\
    &\asymp n \sum_{k\in \Nc_{-1}} \vert \psi_{-1,k}(x) \vert^p \frac{1}{\alpha^p} + n \sum_{j=0}^{j_1} \sum_{k\in \Nc_{j}} 2^{j_1 p/2} \vert  \psi_{jk}(x)\vert^p \frac{1}{\alpha^p} \\
    &+ n^{p/2} \bigg( \sum_{k\in \Nc_{-1}} \vert \psi_{-1,k}(x) \vert^2 \frac{1}{\alpha^2} + \sum_{j=0}^{j_1} 2^{j_1} \sum_{k \in \Nc_{j}} \vert \psi_{jk}(x) \vert^2 \frac{1}{\alpha^2} \bigg)^{p/2}.
\end{align*}
Recall the definition of $\psi_{jk}$ and noting that grant to the boundedness of the support of the wavelet parents $\phi$ and $\psi$ we have for any $x$ and fixed $j$ that $\psi_{jk}(x) \neq 0$ only for a finite number of $k$ that is independent of $j$.
Thus, using the last expression we bound from above as follows
\begin{align*}
&\int_\Delta \E \bigg\vert \sum_{k \in \Nc_{-1}} \left( \sum_{i=1}^n \sigma_{-1} W_{i,-1,k} \right) \psi_{-1,k}(x) + \sum_{j=0}^{\uppreslev} \sum_{k\in \Nc_j} \left( \sum_{i=1}^n \sigmatilde_j W_{ijk} \right) \psi_{jk}(x) \bigg\vert^p \dd x \\
    &\leq C \bigg(\frac{n}{\alpha^p} + n 2^{j_1 p/2} \sum_{j=0}^{\uppreslev} \frac{2^{j p/2}}{\privpar^p} + n^{p/2} \bigg( \frac{1}{\privpar^2} + 2^{j_1} \sum_{j=0}^{j_1} \frac{2^j}{\alpha^2} \bigg)^{p/2}\bigg)\\
    &\simeq \frac{n}{\privpar^p} + n \cdot \frac{ 2^{p\uppreslev}}{\privpar^p} + \frac{n^{p/2}}{\privpar^p} +  n^{p/2}\cdot  \frac{ 2^{p \uppreslev}}{\privpar^p}.
\end{align*}
Thus,
\begin{align}
    \E &\int_\Delta \bigg\vert \sum_{k\in \Nc_{-1}} \bigg( \frac{1}{n} \sum_{i=1}^n \sigma_{-1} W_{i,-1,k} \bigg) \psi_{-1,k}(x) + \sum_{j=0}^{\uppreslev} \sum_{k\in \Nc_j} \left( \frac{1}{n} \sum_{i=1}^n \sigmatilde_j W_{ijk} \right) \psi_{jk}(x) \bigg\vert^p \dd x\nonumber\\
    & \lesssim \frac{ 2^{p\uppreslev}}{\alpha^p n^{p-1}} + \left( \frac{2^{2\uppreslev}}{n\alpha^2} \right)^{p/2}.\label{EQ:DEF:2ndterm}
\end{align}
Combining \eqref{EQ:MATCHED:STOCH:1} and \eqref{EQ:DEF:2ndterm} yields
\begin{equation*}
    \E \Vert \fhatlin - \E [\fhatlin] \Vert_p^p \lesssim  \left( \frac{2^{2\uppreslev}}{n\alpha^2} \right)^{p/2} + \left(\frac{2^{j_1}}{n} \right)^{p/2},
\end{equation*}
which proves \eqref{EQ:MATCHED:1}.
Choosing $\uppreslev = \uppreslev(n,\alpha)$ as in \eqref{EQ:j1linear} immediately implies \eqref{EQ:MATCHED:2}.

\subsection{Proof of Corollary~\ref{COR:UPPER:LINEAR}}

We distinguish between the cases $p \geq r$ and $p < r$.

\paragraph{\emph{1. Case}: $p > r$}
In this case, $s^\prime = s$. Let us consider the estimator $\fhatlin$ with $\uppreslev$ chosen as in Proposition~\ref{PROP:MATCHED}.
First note that there exists a constant $\bar C > 0$ such that the Lebesgue measure of $\supp(\fhatlin - f))$ is bounded from above by a constant $\bar C > 0$.
Then, applying H\"older's inequality and Proposition~\ref{PROP:MATCHED} yields
\begin{align*}
    \E \Vert \fhatlin - f \Vert_r^r \leq \bar C^{1-r/p} \big( \E \| \fhatlin - f \|_p^p \big)^{r/p} \lesssim (n\alpha^2)^{-\frac{rs}{2s+2}} \vee n^{-\frac{rs}{2s+1}}.
\end{align*}

\paragraph{\emph{2. Case}: $p \leq r$}
In this case, $\sprime = s - 1/p + 1/r$.
Grant to the Besov embedding it holds $\Besov{s}{p}{q} \subset \Besov{\sprime}{r}{q}$, which implies $\DBesov{s}{p}{q} \subset \DBesov{\sprime}{r}{q}$.
Thus, again using the upper bound for the matched case from Proposition~\ref{PROP:MATCHED}, 
\begin{align*}
    \sup_{f \in \DBesov{s}{p}{q}} \E \Vert \fhatlin - f \Vert_r^r 
    &\leq  \sup_{f \in \DBesov{s^\prime}{r}{q}} \E \Vert \fhatlin - f \Vert_r^r\\
    &\lesssim (n\alpha^2)^{-\frac{r s^\prime }{2s^\prime + 2}} \vee n^{-\frac{r s^\prime}{2 s^\prime + 2}},
\end{align*}
which is the desired bound for the case $p \leq r$.

\subsection{Inequalities for moments of sums of independent random variables}

\begin{lemma}\label{LEM:ROSENTHAL}
Let $X_1,\ldots,X_n$ be independent centred random variables with $\E [\vert X_i \vert^r] < \infty$.
\begin{enumerate}[label=(\roman*)]
    \item\label{LEM:ROSENTHAL:LEQ} If $0 < r \leq 2$, then
    \begin{equation*}
    \E \left( \bigg \lvert \sum_{i=1}^n X_i \bigg \rvert^r \right) \leq \bigg( \sum_{i=1}^n \E (X_i^2) \bigg)^{r/2}.
\end{equation*}
    \item\label{LEM:ROSENTHAL:GEQ} If $r > 2$, then there exists a constant $C=C(r)$ such that
    \begin{equation*}
    \E \left( \bigg \lvert \sum_{i=1}^n X_i \bigg\rvert^r \right) \leq C \bigg\{ \sum_{i=1}^n \E (\vert X_i \vert^r) + \bigg( \sum_{i=1}^n \E (X_i^2) \bigg)^{r/2} \bigg\}.
\end{equation*}
\end{enumerate}
\end{lemma}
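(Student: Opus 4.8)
The plan is to treat the two ranges of $r$ separately: part (i) follows from a one-line convexity argument, while part (ii) I would obtain by the classical symmetrisation-plus-Khintchine route, with the only genuine work hidden in a moment bound for a sum of nonnegative summands.

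\smallskip\noindent\emph{Part (i).} Write $S=\sum_{i=1}^n X_i$. Since $0<r\le 2$, the map $t\mapsto t^{r/2}$ is concave on $\R_+$, so Jensen's inequality gives $\E|S|^r=\E(S^2)^{r/2}\le(\E S^2)^{r/2}$. Because the $X_i$ are independent and centred, $\E S^2=\sum_{i=1}^n\E X_i^2$, and substituting this yields the claim with constant $1$.

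\smallskip\noindent\emph{Part (ii).} First I would symmetrise: introducing independent Rademacher signs $\epsilon_1,\ldots,\epsilon_n$ independent of the $X_i$, the standard symmetrisation inequality for the convex function $t\mapsto|t|^r$ gives $\E|S|^r\le 2^r\,\E|\sum_i\epsilon_i X_i|^r$. Conditioning on the $X_i$ and applying Khintchine's inequality to the resulting Rademacher sum yields $\E_\epsilon|\sum_i\epsilon_i X_i|^r\le B_r(\sum_i X_i^2)^{r/2}$ for a constant $B_r=B_r(r)$, hence $\E|S|^r\le C_r\,\E(\sum_i X_i^2)^{r/2}$. This reduces everything to an $(r/2)$-th moment bound for the nonnegative sum $\sum_i Y_i$ with $Y_i\defeq X_i^2$, the point being that $\E Y_i^{r/2}=\E|X_i|^r$ and $(\sum_i\E Y_i)^{r/2}=(\sum_i\E X_i^2)^{r/2}$ are exactly the two terms that must appear on the right-hand side.

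\smallskip\noindent The hard part will therefore be this nonnegative Rosenthal bound $\E(\sum_i Y_i)^q\le C[\sum_i\E Y_i^q+(\sum_i\E Y_i)^q]$ for $q=r/2>1$. I would prove it by \emph{peeling off} one summand: writing $\E(\sum_i Y_i)^q=\sum_j\E[(\sum_i Y_i)^{q-1}Y_j]$ and, when $1<q\le 2$, using the subadditivity $(a+b)^{q-1}\le a^{q-1}+b^{q-1}$ to split $(\sum_i Y_i)^{q-1}\le Y_j^{q-1}+(\sum_{i\ne j}Y_i)^{q-1}$. Independence turns the second piece into $\E[(\sum_{i\ne j}Y_i)^{q-1}]\,\E Y_j$, and Jensen (concavity, since $q-1\le 1$) bounds $\E[(\sum_{i\ne j}Y_i)^{q-1}]\le(\sum_i\E Y_i)^{q-1}$; summing over $j$ then gives the bound with constant $1$, and translating back through the reduction completes part (ii) for $2<r\le 4$. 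For larger $r$ the same peeling identity works, but splitting $(\sum_i Y_i)^{q-1}$ now costs a factor $2^{q-2}$ and leaves a residual moment of order $q-1>1$, so I would close the argument by induction on $\lceil q\rceil$, invoking the already-established lower-order cases (and part (i)) to control the residual term. I expect the bookkeeping of the constants across this induction to be the only delicate point; the structure itself is entirely routine, and one may alternatively simply cite the inequality from the standard references.
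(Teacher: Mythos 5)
Your part (i) is exactly the paper's argument: Jensen's inequality applied to the concave map $t\mapsto t^{r/2}$ on $\R_+$, together with $\E S^2=\sum_i\E X_i^2$. For part (ii) the paper gives no proof at all --- it simply cites Petrov (1995), Theorem~2.9 --- so your symmetrisation/Khintchine/nonnegative-Rosenthal route is a self-contained substitute rather than a divergence, and it is a legitimate standard way to prove the inequality. The reduction $\E|S|^r\le C_r\,\E\bigl(\sum_i X_i^2\bigr)^{r/2}$ is correct, and your peeling argument for the nonnegative sum does close with constant $1$ when $1<q\le 2$, i.e.\ $2<r\le 4$. The one place where the sketch understates the work is the induction for $r>4$: after invoking the inductive bound $\E\bigl(\sum_{i\ne j}Y_i\bigr)^{q-1}\lesssim\sum_i\E Y_i^{q-1}+\bigl(\sum_i\E Y_i\bigr)^{q-1}$ you are left with the cross term $\bigl(\sum_i\E Y_i^{q-1}\bigr)\cdot\bigl(\sum_j\E Y_j\bigr)$, which is not one of the two target quantities. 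Absorbing it into $\sum_i\E Y_i^q+\bigl(\sum_i\E Y_i\bigr)^q$ requires an interpolation step --- $\E Y^{q-1}\le(\E Y^q)^{(q-2)/(q-1)}(\E Y)^{1/(q-1)}$, then H\"older over $i$ and Young's inequality with the exponent identity $(2-\theta)/(1-\theta)=q$ for $\theta=(q-2)/(q-1)$ --- so this is a genuine (if standard) extra argument, not merely bookkeeping of constants. With that step supplied the proof is complete; alternatively, citing the reference, as the paper does, is of course sufficient.
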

Inequality~\ref{LEM:ROSENTHAL:LEQ} follows directly from Jensen's inequality and concavity of $x \mapsto x^{r/2}$ for $0 < r \leq 2$.
For a proof of inequality~\ref{LEM:ROSENTHAL:GEQ} we refer to \citep{petrov1995limit}, p.~59, Theorem~2.9. \section{Proof of Theorem~\ref{THM:UPPER:NONLINEAR}}\label{SEC:PROOF:UPPER:NONLINEAR}

This section is devoted to the proof of Theorem~\ref{THM:UPPER:NONLINEAR}.
The main reasoning is given in Section~\ref{SSEC:THM:UPPER:NONLINEAR} but some tedious calculations for this proof are deferred to Section~\ref{APP:BOUNDS:ETERMS}.
Sections~\ref{SSEC:CONC} and \ref{SSEC:MOMENTS} contain auxiliary results used in Section~\ref{APP:BOUNDS:ETERMS}.

\subsection{Proof of Theorem~\ref{THM:UPPER:NONLINEAR}}\label{SSEC:THM:UPPER:NONLINEAR}

As in the proof of the Corollary~\ref{COR:UPPER:LINEAR}, we note that it is sufficient to prove the result for $p \leq r$ and one can deduce the result for $p>r$ as in the proof of this corollary.

We consider the upper bound $\E \lVert \festnl - f\rVert_r^r \leq 3^{r-1} (\E \lVert A \rVert_r^r + \E \lVert B \rVert_r^r + \lVert C \rVert_r^r)$
where
\begin{align*}
    A &= \sum_{k \in \Z} (\alphahat_{\lowreslev k} - \alpha_{\lowreslev k})\phi_{\lowreslev k}, \quad B =  \sum_{j=\lowreslev}^{\uppreslev} \sum_{k \in \Z} (\betatilde_{j k} - \beta_{j k})\psi_{j k}, \quad \text{and}\\
    C &=  \sum_{k \in \Z} \alpha_{\uppreslev k}\phi_{\uppreslev k} - f.
\end{align*}
We consider the risk bounds for $\E \Vert A \Vert_r^r$, $\E \Vert B \Vert_r^r$, and $\Vert C \Vert_r^r$ separately.

\medskip

\paragraph{\bfseries Upper bound for the term $\E \Vert A \Vert_r^r$:} \quad
Putting $\alphaprime_{\lowreslev k} = \frac{1}{n} \sum_{i=1}^n \phi_{\lowreslev k}(X_i)$ it holds
\begin{align*}
        \E \Vert A \Vert_r^r \leq 2^{r-1} \E \big \Vert \sum_{k \in \Z} (\alphaprime_{\lowreslev k} - \alpha_{\lowreslev k}) \phi_{\lowreslev k} \big \Vert_r^r + 2^{r-1} \E \big \Vert \sum_k \bigg( \frac{\sigma_{\lowreslev - 1}}{n} \sum_{i=1}^n W_{i,\lowreslev - 1,k} \bigg) \phi_{\lowreslev k} \big \Vert_r^r.
\end{align*}
The first term on the right-hand side is bounded by the compact support assumption on $\phi$ and using Lemma~1 from \citep{donoho1996density} as in the non-private case (see \citep{donoho1996density}, p.~522):
\begin{equation*}
    2^{r-1} \E \big \Vert \sum_{k \in \Z} (\alphaprime_{\lowreslev k} - \alpha_{\lowreslev k}) \phi_{\lowreslev k} \big \Vert_r^r \leq C(r)2^{\lowreslev (r/2 - 1)} \sum_k \E \vert \alphaprime_{\lowreslev k} - \alpha_{\lowreslev k} \vert^r \leq C(r) \bigg( \frac{2^{\lowreslev}}{n} \bigg)^{r/2}.
\end{equation*}
Concerning the second term, first, by Fubini's theorem
\begin{align*}
    \E \big\Vert \sum_k \bigg( \frac{\sigma_{\lowreslev - 1}}{n} \sum_{i=1}^n W_{i,\lowreslev-1,k} \bigg) \phi_{\lowreslev k} \big \Vert_r^r &= \int \E \vert \sum_k \bigg( \frac{\sigma_{\lowreslev - 1}}{n} \sum_{i=1}^n W_{i,\lowreslev-1,k} \bigg) \phi_{\lowreslev k}(x) \vert^r \dd x,
\end{align*}
and the integrand on the right-hand side can be bounded as follows: for $r > 2$,
\begin{align*}
    \E \lvert \sum_k \bigg( \frac{\sigma_{\lowreslev - 1}}{n} \sum_{i=1}^n W_{i,\lowreslev-1,k} \bigg) \phi_{\lowreslev k}(x) \rvert^r &\leq \frac{C(r)}{n^r} \bigg[ \sigma_{\lowreslev - 1}^r \sum_k \lvert \phi_{\lowreslev k}(x) \rvert^r \sum_{i=1}^n \E \lvert W_{i,\lowreslev-1,k} \rvert^r\\
    &\hspace{4em}+ \bigg( \sigma_{\lowreslev - 1}^2 \sum_k \lvert \phi_{j_0k}(x) \rvert^2 \sum_{i=1}^n \E \lvert W_{i,\lowreslev-1,k} \rvert^2 \bigg)^{r/2} \bigg]\\
    &\hspace{-5em}= \frac{C(r)}{n^r} \bigg[ \sigma_{\lowreslev - 1}^r \sum_k \lvert \phi_{\lowreslev k}(x) \rvert^r n r! +  \bigg(2 n \sigma_{\lowreslev - 1}^2 \sum_k \lvert \phi_{\lowreslev k}(x) \rvert^2  \bigg)^{r/2}  \bigg]\\
    &\hspace{-5em}\lesssim \frac{1}{n^r} \bigg[ 2^{r\lowreslev} \cdot \frac{n}{\alpha^r} + \bigg( 2^{2\lowreslev} n \alpha^{-2} \bigg)^{r/2} \bigg]\\
    &\hspace{-5em}= \frac{2^{r\lowreslev}}{n^{r-1}\alpha^r} + \bigg( \frac{2^{2\lowreslev}}{n\alpha^2} \bigg)^{r/2},
\end{align*}
whereas for $r \leq 2$,
\begin{equation*}
  \E \vert \sum_k \bigg( \frac{\sigma_{\lowreslev - 1}}{n} \sum_{i=1}^n W_{i,\lowreslev-1,k} \bigg) \phi_{\lowreslev k}(x) \vert^r \lesssim \bigg( \frac{2^{2\lowreslev}}{n\alpha^2} \bigg)^{r/2}.
\end{equation*}
Thus, altogether,
\begin{equation*}
    \E \lVert A \rVert_r^r \lesssim \bigg( \frac{2^{\lowreslev}}{n} \bigg)^{r/2} + \bigg( \frac{2^{2\lowreslev}}{n\alpha^2} \bigg)^{r/2}.
\end{equation*}
Hence, for our choice of $\lowreslev$ and grant to $s < N+1$ from Assumption~\ref{COND:wavelets}, we obtain
\begin{align*}
    \E \lVert A \rVert_r^r &\lesssim \Bigg( \frac{n^{\frac{1}{2(N+1)+1}}}{n} \Bigg)^{r/2} + \Bigg( \frac{(n\privpar^2)^{\frac{2}{2(N+1)+2}}}{n\alpha^2} \Bigg)^{r/2}\\
    &= n^{-\frac{r(N+1)}{2(N+1)+1}} + (n\privpar^2)^{-\frac{r(N+1)}{2(N+1)+2}}\\
    &\leq n^{-\frac{rs}{2s+1}} + (n\privpar^2)^{-\frac{rs}{2s+2}}\\
    &\lesssim n^{-\frac{rs}{2s+1}} \vee  (n\privpar^2)^{-\frac{rs}{2s+2}} \vee \left( \frac{n}{\log n} \right)^{- \frac{r(s-1/p+1/r)}{2(s-1/p)+1}} \vee \left( \frac{n\privpar^2}{\log (n\privpar^2)} \right)^{- \frac{r(s-1/p+1/r)}{2(s-1/p)+2}},
\end{align*}
and the bound on the right-hand side is the claimed rate.

\medskip

\paragraph{\bfseries Upper bound for the term $\E \lVert B \rVert_r^r$:} 
\quad
We consider the sets 
\begin{align*}
    &\Bhat_j = \{ k : \vert \betahat_{jk} \vert > K\threshold \}, & &\Shat_j = \Bhat_j^\complement,\\
    &B_j = \{ k : \vert \beta_{jk} \vert > (K/2)\threshold \}, & &S_j = B_j^\complement,\\
    &\Bprime_j = \{ k : \vert \beta_{jk} \vert > 2Kt_{j,n,\alpha} \}, & &\Sprime_j = (\Bprime_j)^\complement,
\end{align*}
and the decomposition
\begin{align*}
    B &= \sum_{j=\lowreslev}^{\uppreslev} \sum_k (\betahat_{jk} - \beta_{jk}) \psi_{jk} \big[ \1_{ \Bhat_j \cap S_j }(k) + \1_{ \Bhat_j \cap B_j }(k) \big]\\
    &\hspace{1em}- \sum_{j=\lowreslev}^{\uppreslev} \sum_k \beta_{jk}\psi_{jk} \big[ \1_{ \Shat_j \cap \Bprime_j }(k) + \1_{ \Shat_j \cap \Sprime_j }(k) \big]\\
    &\eqdef (e_{bs} + e_{bb}) - (e_{sb} + e_{ss}).
\end{align*}
Appropriate bounds for the four terms $e_{bs}, e_{bb}, e_{sb}, e_{ss}$ are derived in Appendix~\ref{APP:BOUNDS:ETERMS}.

\medskip

\paragraph{\bfseries Upper bound for the term $\lVert C \rVert_r^r$:} \quad
In the case we consider, $p \leq r$, we use the embedding $\Besov{s}{p}{q} \subset \Besov{s^\prime}{r}{\infty}$, where we recall that $s^\prime = s - \frac 1p + \frac 1r$. Then, it holds
\begin{equation*}
    \lVert \sum_{k \in \Z} \alpha_{\uppreslev k}\phi_{\uppreslev k} - f \rVert_r^r \leq C \lVert f \rVert_{spq}^r \cdot 2^{-\uppreslev \sprime r}.
\end{equation*}
Moreover, with our choice of $j_1$,
\begin{align*}
    2^{-\uppreslev \sprime r} &\leq 2^{-\uppreslev^\prime \sprime r} + 2^{-\uppreslev^{\pprime} \sprime r}\\
    &\lesssim \bigg( \frac{n}{\log n} \bigg)^{-
    \frac{r\sprime}{2(s-1/p)+1}} + \bigg(\frac{n \alpha^2}{\log(n \alpha^2)}\bigg)^{-\frac{r\sprime}{2(s-1/p)+2}},
\end{align*}
and the sum on the right-hand side is bounded from above by the claimed rate.

\subsection{Bounds for the terms \texorpdfstring{$e_{bs}$}{ebs}, \texorpdfstring{$e_{bb}$}{ebb}, \texorpdfstring{$e_{sb}$}{esb}, and \texorpdfstring{$e_{ss}$}{ess}}\label{APP:BOUNDS:ETERMS}

Consider the event $A_{jk}$ defined via $A_{jk} = \{ \lvert \betahat_{jk} - \beta_{jk} \rvert > K/2 \cdot \threshold \}$.
The concentration inequality \eqref{EQ:CONC:INEQ:AJK} for this event as well as the bound \eqref{EQ:NORM:BOUND:RANDOM:FCT} will be used frequently in the sequel without further reference.
In the following, we bound the terms $\E \lVert e_{bs} \rVert_r^r$, $\E \lVert e_{bb} \rVert_r^r$, $\E \lVert e_{sb} \rVert_r^r$, and $\E \lVert e_{ss} \rVert_r^r$ separately.

\subsubsection{Bound for $e_{bs}$}

By the Cauchy-Schwarz inequality and the fact that $\Bhat_j \cap S_j \subset A_{jk}$,
\begin{align*}
    \E \lVert e_{bs} \rVert_r^r &\lesssim \sum_{j=\lowreslev}^{\uppreslev} 2^{j(r/2-1)} \sum_{k \in \Nc_j} \E [\lvert \betahat_{jk} - \beta_{jk}\rvert^r \1_{\Bhat_j \cap S_j}(k)]\\
    &\leq \sum_{j=\lowreslev}^{\uppreslev} 2^{j(r/2-1)} \sum_{k \in \Nc_j} \big( \E [\lvert \betahat_{jk} - \beta_{jk}\rvert^{2r}] \big)^{1/2} \cdot \P( \lvert \betahat_{jk} - \beta_{jk} \rvert \geq K/2 \cdot t_{j,n,\alpha})^{1/2}\\
    &\leq \sum_{j=j_0}^{\uppreslev} 2^{j(r/2-1)} \sum_{k \in \Nc_j} (n^{-r/2} \vee j^{\nu r/2} 2^{jr/2} (n\privpar^2)^{-r/2}) \cdot 2^{-\gamma j/2}\\
    &\leq \sum_{j=j_0}^{\uppreslev} 2^{jr/2} 2^{-\gamma j/2} (n^{-r/2} \vee j^{\nu r/2} 2^{jr/2} (n\privpar^2)^{-r/2})\\
    &\leq n^{-r/2} \sum_{j=j_0}^{\uppreslev} 2^{jr/2} 2^{-\gamma j/2} + (n\privpar^2)^{-r/2} \uppreslev^{\nu r/2} \sum_{j=j_0}^{\uppreslev} 2^{jr} 2^{-\gamma j/2}
\end{align*}
and this term is bounded from above by the claimed rate provided that $\gamma \geq 2r$.

\subsubsection{Bound for $e_{sb}$}
Using the relation $\Shat_j \cap \Bprime_j \subset A_{jk}$, we obtain
\begin{align*}
    \E \lVert e_{sb} \rVert_r^r &\lesssim \sum_{j=j_0}^{j_1} 2^{j(r/2 - 1)} \sum_k \lvert \beta_{jk} \rvert^r \cdot \E [\1_{ \Shat_j \cap B_j^\prime }(k)]\\
    &\lesssim \sum_{j=j_0}^{j_1} 2^{j(r/2 - 1)} \sum_{k \in \Nc_j} \lvert \beta_{jk} \rvert^r \cdot  \P(\lvert \betahat_{jk} - \beta_{jk} \rvert \geq K \cdot t_{j,n,\alpha})\\
    &\lesssim  \sum_{j=j_0}^{j_1} 2^{j (\frac r2 - 1 - \gamma)} \|\beta_{j \cdot}\|_r^r .
\end{align*}
In the considered case $p \leq r $, we exploit the embedding $\Besov{s}{p}{q} \subseteq \Besov{\sprime}{r}{q}$ with $\sprime = s -\frac 1p + \frac 1r$ to get the bound
\[ \|\beta_{j \cdot}\|_r \lesssim 2 ^{-j(s'+\frac 12 - \frac 1r)}. \]
Hence,
\[ \E \lVert e_{sb} \rVert_r^r \lesssim \sum_{j=j_0}^{j_1} 2^{j(\frac r2 - 1 - \gamma)} 2^{-jr ( \sprime + \frac 12 - \frac 1r )} = \sum_{j=j_0}^{j_1} 2^{-j(\gamma + r\sprime)} \lesssim 2^{-\lowreslev(\gamma + r\sprime)}\]
by the definition of $\lowreslev$.
Noting that
\begin{align*}
    2^{-\lowreslev(\gamma + r\sprime)} &\lesssim (n\privpar^2)^{-\frac{\gamma+r\sprime}{2(N+1)+2}} \vee n^{-\frac{\gamma + r\sprime}{2(N+1)+1}}\\
    &\leq (n\privpar^2)^{-\frac{rs}{2s+2}} \vee n^{-\frac{rs}{2s+1}}
\end{align*}
provided that $\gamma$ is large enough ($\gamma \geq r(N+1)$ is sufficient), shows that $\E \lVert e_{sb} \rVert_r^r$ is at most of the same order as the claimed rate.

\subsubsection{Bound for $e_{bb}$}

Put $\threshold^\prime= \gamma j^{\nu + 1/2}n^{-1/2}$ and $\threshold^{\pprime} = \gamma j^{\nu + 1/2}(n\privpar^2)^{-1/2}2^{j/2}$.
Note that $\threshold = \threshold^\prime \vee \threshold^{\pprime}$.
For any $\rho \geq 0$, it holds
\begin{align}
    \E \lVert e_{bb} \rVert_r^r &\lesssim \sum_{j=j_0}^{j_1} 2^{j(r/2-1)} \sum_k \E [\lvert \betahat_{jk} - \beta_{jk} \rvert^r  \1_{ \Bhat_j \cap B_j }(k)]\nonumber \\
    &\lesssim \sum_{j=j_0}^{j_1} 2^{j(r/2-1)} \sum_k  (n^{-r/2} \vee j^{\nu r/2} 2^{j r/2} (n\privpar^2)^{-r/2}) \1_{B_j}(k) \nonumber\\
    &\lesssim \sum_{j=j_0}^{j_1} 2^{j(r/2-1)} \sum_k  n^{-r/2} \1_{B_j}(k) \notag\\
    &\hspace{1em}+ \sum_{j=j_0}^{j_1} 2^{j(r/2-1)} \sum_k j^{\nu r/2} 2^{j r/2} \cdot (n\privpar^2)^{-r/2} \1_{B_j}(k) \nonumber\\
    &\lesssim \sum_{j=\lowreslev}^{\uppreslev} 2^{j(r/2 -1)} (\threshold^{\prime})^{r} \cdot \sum_k |\beta_{jk}|^\rho (\threshold^{\prime})^{-\rho} \notag\\
    &\hspace{1em}+ \sum_{j=\lowreslev}^{\uppreslev} 2^{j(r/2 -1)} (\threshold^{\pprime})^{r} \sum_k |\beta_{jk}|^\rho (\threshold^{\pprime})^{-\rho} \nonumber\\
    &\lesssim \underbrace{\sum_{j=j_0}^{j_1} 2^{j(r/2 -1)} (\threshold^{\prime})^{r-\rho} \sum_k \vert \beta_{jk}\vert^\rho}_{\eqdef S_1} + \underbrace{\sum_{j=j_0}^{j_1} 2^{j(r/2 -1)} (\threshold^{\pprime})^{r-\rho} \cdot \sum_k \vert \beta_{jk}\vert^\rho}_{\eqdef S_2}. \label{EQ:bound}
\end{align}
As this argument shows, one can even choose distinct values of $\rho$ for different $j$, which will be used in the following calculations. 
Note that
\begin{equation*}
    \sum_k \lvert \beta_{jk} \rvert^p \lesssim 2^{-jp(s+1/2-1/p)},
\end{equation*}
and, if $\rho \leq p$, by H{\"o}lder's inequality 
\begin{equation*}
    \sum_{k} \lvert \beta_{jk} \rvert^\rho \leq 2^{j(1-\rho/p)} \big( \sum_k \lvert \beta_{jk} \rvert^p \big)^{\rho/p} \leq 2^{j(1-\rho/p)} 2^{-j\rho(s+1/2-1/p)}=2^{-j\rho(s+1/2-1/\rho)}.
\end{equation*}
In the sequel, we consider three different cases corresponding to the three regimes in the statement of Theorem~\ref{THM:UPPER:NONLINEAR}.
\bigskip

\paragraph{\emph{1. Case}: $p > r/(s+1)$}

\begin{itemize}[topsep=1em, itemsep=1em, leftmargin=1em]
    \item Bound for $S_1$: Set $q_1=r/(2s+1)$ and define $\kappa_1 \in \N$ such that \[2^{\kappa_1(r/2-p/2-sp)} \asymp n^{-\frac{p-q_1}{2}}.\]

Choosing $\rho < q_1 \leq p$  for the indices $j \in \llbr \lowreslev, \kappa \rrbr$, we obtain (note that $s+1/2=r/(2q_1)$)
\begin{align*}
    \sum_{j=j_0}^{\kappa_1} 2^{j(r/2 -1)} (\threshold^{\prime})^{r-\rho} \sum_k \vert \beta_{jk}\vert^\rho &\lesssim \uppreslev^{(r-\rho)(\nu + 1/2)}n^{-(r-\rho)/2} \sum_{j=j_0}^{\kappa_1} 2^{j(r/2 -1)} \sum_k \vert \beta_{jk}\vert^{\rho}\\
&\lesssim \uppreslev^{(r-\rho)(\nu + 1/2)}n^{-(r-\rho)/2} \sum_{j=\lowreslev}^{\kappa_1} 2^{j(r/2 - \rho(s+1/2))}\\
    &\lesssim (\log n)^C n^{-(r-\rho)/2} 2^{\kappa_1 (r/2-\frac{r\rho}{2q_1})}\\
    &\asymp (\log n)^C n^{(q_1-r)/2}\\
    &= (\log n)^C n^{-\frac{rs}{2s+1}}.
\end{align*}

Choosing $\rho = p$ for indices $j \in \llbr \kappa_1 +1, \uppreslev\rrbr$, we obtain
\begin{align*}
    \sum_{j=\kappa_1+1}^{\uppreslev} 2^{j(r/2 -1)} (\threshold^{\prime})^{r-\rho} \sum_k \vert \beta_{jk}\vert^\rho &\lesssim \uppreslev^{(r-p)(\nu + 1/2)}n^{-(r-p)/2} \sum_{j=\kappa_1+1}^{\uppreslev} 2^{j(r/2-sp - p/2)}\\
    &\lesssim \uppreslev^{(r-p)(\nu + 1/2)}n^{-(r-p)/2} 2^{\kappa_1(r/2-sp - p/2)}\\
    &\lesssim \uppreslev^{(r-p)(\nu + 1/2)}n^{-(r-q_1)/2}\\
    &\asymp (\log n)^C \cdot n^{-\frac{rs}{2s+1}}.
\end{align*}
\item Bound for $S_2$: Set $q_2=r/(s+1)$ and define $\kappa_2 \in \N$ such that \[2^{\kappa_2(r-p-sp)} \asymp (n\privpar^2)^{-\frac{p-q_2}{2}}.\]

Choosing $\rho < q_2 \leq p$ for the indices $j \in \llbr \lowreslev,\kappa_2\rrbr$, we obtain (note that $s+1=r/q_2$)
\begin{align*}
    \sum_{j=j_0}^{\kappa_2} 2^{j(r/2 -1)} (\threshold^{\pprime})^{r-\rho} \cdot \sum_k \vert \beta_{jk}\vert^\rho &\lesssim  \uppreslev^{(r-\rho)(\nu + 1/2)} (n\privpar^2)^{-(r-\rho)/2} \sum_{j=\lowreslev}^{\kappa_2} 2^{j(r - \rho(s+1))}\\
    &\lesssim (\log n)^C (n\privpar^2)^{-(r-\rho)/2} 2^{\kappa_2(r - \frac{r\rho}{q_2})}\\
    &\asymp (\log n)^C (n\privpar^2)^{(q_2-r)/2}\\
    &= (\log n)^C (n\privpar^2)^{-\frac{rs}{2s+2}}.
\end{align*}

Choosing $\rho = p$ for indices $j \in \llbr \kappa_2 +1, \uppreslev\rrbr$, we obtain
\begin{align*}
    \sum_{j=\kappa_2+1}^{\uppreslev} 2^{j(r/2 -1)} (\threshold^{\pprime})^{r-\rho} \sum_k \vert \beta_{jk}\vert^\rho &\lesssim \uppreslev^{(r-p)(\nu + 1/2)}(n\privpar^2)^{-(r-p)/2} \sum_{j=\kappa_2+1}^{\uppreslev} 2^{j(r -sp - p)}\\
    &\lesssim \uppreslev^{(r-p)(\nu + 1/2)}(n\privpar^2)^{-(r-p)/2} 2^{\kappa_2(r-sp - p)}\\
    &\lesssim (\log n)^C \cdot (n\privpar^2)^{(q_2-r)/2}\\
    &=(\log n)^C \cdot (n\privpar^2)^{-\frac{rs}{2s+2}}.
\end{align*}
\end{itemize}

\paragraph{\emph{2. Case}: $p \in (r/(2s+1),r/(s+1)]$}

\begin{itemize}[topsep=1em, itemsep=1em, leftmargin=1em]
    \item Bound for $S_1$: The sum $S_1$ can be dealt with as in the first case, since the choices of $q_1$ and $\kappa_1$ from that case are still legitimated for $p \in (r/(2s+1),r/(s+1)]$. 
    \item Bound for $S_2$: In order to bound $S_2$ in the second case, define $q_2$ and $\kappa_2$ via the relations
\[ q_2 = r\cdot\frac{1-1/r}{s-1/p+1} \qquad \text{and} \qquad 2^{\kappa_2} \asymp (n\privpar^2)^{\frac{q_2}{2(r-1)}}. \]
To deal with the sum over $j \in \llbr \lowreslev,\kappa_2 \rrbr$, we take $\rho = p$ and obtain
\begin{align*}
    \sum_{j=j_0}^{\kappa_2} 2^{j(r/2 -1)} (\threshold^{\pprime})^{r-\rho} \sum_k \vert \beta_{jk}\vert^\rho &\lesssim \uppreslev^{(\nu + 1/2)(r-p)} (n\privpar^2)^{(r-p)/2} \sum_{j=\lowreslev}^{\kappa_2} 2^{j(r-sp-p)}\\
    &\lesssim (\log n)^C (n\privpar^2)^{-(r-p)/2} \sum_{j=\lowreslev}^{\kappa_2} 2^{j(r-1)(1-p/q_2)}\\
    &\lesssim (\log n)^C (n\privpar^2)^{-(r-p)/2} 2^{\kappa_2(r-1)(1-p/q_2)}\\
    &\asymp (\log n)^C (n\privpar^2)^{(q_2-r)/2}\\
    &= (\log n)^C (n\privpar^2)^{-\frac{r\sprime}{2(s-1/p)+2}}
\end{align*}

For the sum over indices $j \in \llbr \kappa_2 + 1,\uppreslev\rrbr$, we choose $\rho > q_2 > p$, and obtain by monotony of $\ell^{\omega}$-norms in $\omega$, and putting $s^{\pprime} = s+1/2-1/p$ that
\begin{align*}
    \sum_{j=\kappa_2 + 1}^{\uppreslev} 2^{j(r/2 -1)} (\threshold^{\pprime})^{r-\rho} \sum_k \vert \beta_{jk}\vert^\rho &\lesssim \uppreslev^{(\nu + 1/2)(r-\rho)} (n\privpar^2)^{-\frac{r-\rho}{2}} \sum_{j=\kappa_2 + 1}^{\uppreslev} 2^{j(r-\rho/2-1-\rho s^{\pprime})}\\
    &\lesssim (\log n)^C (n\privpar^2)^{-\frac{r-\rho}{2}} \sum_{j=\kappa_2 + 1}^{\uppreslev} 2^{j(r-1-\rho/2 -\rho s^{\pprime})}\\
    &=(\log n)^C (n\privpar^2)^{-\frac{r-\rho}{2}} \sum_{j=\kappa_2 + 1}^{\uppreslev} 2^{j(r-1)(1-\rho/q_2)}\\
    &\lesssim (\log n)^C (n\privpar^2)^{-\frac{r-\rho}{2}} 2^{\kappa_2(r-1)(1-\rho/q_2)}\\
    &\asymp (\log n)^C (n\privpar^2)^{\frac{q_2 -r}{2}}\\
    &= (\log n)^C (n\privpar^2)^{-\frac{r\sprime}{2(s-1/p)+2}}.
\end{align*}
\end{itemize}

\paragraph{\emph{3. Case}: $p \leq r/(2s+1)$}

\begin{itemize}[topsep=1em, itemsep=1em, leftmargin=1em]
    \item Bound for $S_1$: Put
\[ q_1 = r \cdot \frac{1/2 - 1/r}{s+1/2-1/p}, \]
and choose $\kappa_1 \in \N$ such that
\[ 2^{\kappa_1} \asymp n^{\frac{1}{2}\frac{q_1}{r/2 - 1}}. \]
Then, taking $\rho = p$ for the indices $j \in \llbr \lowreslev,\kappa_1\rrbr$ in the first sum in \eqref{EQ:bound}, we obtain
\begin{align*}
    \sum_{j=j_0}^{\kappa_1} 2^{j(r/2 -1)} (\threshold^{\prime})^{r-\rho} \sum_k \vert \beta_{jk}\vert^\rho &\leq \uppreslev^{(\nu + 1/2)(r-p)} n^{-(r-p)/2} \sum_{j=j_0}^{\kappa_1} 2^{j(r/2 - sp - p/2)}\\
    &\lesssim (\log n)^C n^{-(r-p)/2} \sum_{j=j_0}^{\kappa_1} 2^{j(r/2-1)(1-p/q_1)}\\
    &\lesssim (\log n)^C n^{-(r-p)/2} 2^{\kappa_1(r/2-1)(1-p/q_1)}\\
    &= (\log n)^C n^{\frac{q_1-r}{2}}\\
    &= (\log n)^C n^{-\frac{r\sprime}{2(s-1/p)+1}}.
\end{align*}
For the sum over indices $j \in \llbr \kappa_1 + 1,\uppreslev\rrbr$, we choose $\rho > q_1 > p$, and obtain by monotony of $\ell^{\omega}$-norms in $\omega$ and putting $s^{\pprime} = s+1/2-1/p$ that
\begin{align*}
    \sum_{j=\kappa_1 + 1}^{\uppreslev} 2^{j(r/2 -1)} (\threshold^{\prime})^{r-\rho} \sum_k \vert \beta_{jk}\vert^\rho &\lesssim \uppreslev^{(\nu + 1/2)(r-\rho)} n^{-\frac{r-\rho}{2}} \sum_{j=\kappa_1 + 1}^{\uppreslev} 2^{j(r/2-1-\rho s^{\pprime})}\\
    &\lesssim (\log n)^C n^{-\frac{r-\rho}{2}} \sum_{j=\kappa_1 + 1}^{\uppreslev} 2^{j(r/2-1-\rho s^{\pprime})}\\
    &=(\log n)^C n^{-\frac{r-\rho}{2}} \sum_{j=\kappa_1 + 1}^{\uppreslev} 2^{j(r/2-1)(1-\rho/q_1)}\\
    &\lesssim (\log n)^C n^{-\frac{r-\rho}{2}} 2^{\kappa_1(r/2-1)(1-\rho/q_1)}\\
    &\asymp (\log n)^C n^{\frac{q_1 -r}{2}}\\
    &= (\log n)^C n^{-\frac{r\sprime}{2(s-1/p)+1}}.
\end{align*}

    \item Bound for $S_2$: $S_2$ can be dealt with exactly as in the second case.
\end{itemize}

\subsubsection{Bound for $e_{ss}$}

For any $0 \leq \rho \leq r$
\begin{align*}
    \E \lVert e_{ss} \rVert_r^r &\lesssim \sum_{j=j_0}^{j_1} 2^{j(\frac r2 -1)} \sum_k \lvert \beta_{jk} \rvert^r \E [\1_{\Shat_j \cap \Sprime_j}(k)] \\
    &\lesssim \sum_{j=j_0}^{j_1} 2^{j(\frac r2-1)} ((\threshold^\prime)^{r-\rho} \vee (\threshold^{\pprime})^{r-\rho}) \sum_k \lvert \beta_{jk} \rvert^\rho .
\end{align*}
This term can be bounded from above by the right-hand side of \eqref{EQ:bound}, and we conclude in the same way as for the term $e_{bb}$.

\subsection{A concentration inequality for the \texorpdfstring{$\betahat_{jk}$}{estimators of detail coefficients}}\label{SSEC:CONC}

For our proof, we need concentration inequalities for the events \begin{equation*}
    A_{jk} \defeq \left\{ \vert \betahat_{jk} - \beta_{jk} \vert \geq (K/2)\frac{j^{\nu+1/2}}{\sqrt{n}} \left(1 \vee \frac{2^{j/2}}{\alpha}\right) \right\}
\end{equation*}
for $K>0$, where $j \in \llbr \lowreslev, \uppreslev \rrbr$ and $k \in \Nc_j$.
Let recall the two-sided Bernstein's inequality (cf.~\cite{boucheron2013concentration} Theorem~2.10).

\begin{theorem}
Let $Y_1,\ldots,Y_n$ be independent real valued random variables. Assume that there exist some positive numbers $v$ and $c$ such that
\begin{equation}
\label{BernsteinHypothese1}
\sum_{i=1}^n\mathbb{E}[Y_i^2]\leqslant v,
\end{equation}
and for all integers $m\geqslant 3$
\begin{equation}
\label{BernsteinHypothese2}
\sum_{i=1}^n \mathbb{E}[\vert Y_i\vert^m]\leqslant \frac{m!}{2}vc^{m-2}.
\end{equation}
Let $S=\sum_{i=1}^n(Y_i-\mathbb{E}[Y_i])$, then for every positive $x$
\begin{equation}
\label{TwoSidedBernsteinsInequality}
\mathbb{P}\left[ \vert S\vert\geqslant \sqrt{2vx}+cx\right]\leqslant 2\exp(-x).
\end{equation}
\end{theorem}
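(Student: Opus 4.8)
The plan is to follow the classical Cramér--Chernoff (exponential moment) method. Write $Z_i = Y_i - \mathbb{E}[Y_i]$, so that $S = \sum_{i=1}^n Z_i$ is a sum of independent centred variables, and fix $t>0$. First I would apply the exponential Markov inequality: for every $\lambda \in (0, 1/c)$,
\[ \mathbb{P}[S \geq t] \leq e^{-\lambda t}\, \mathbb{E}[e^{\lambda S}] = e^{-\lambda t} \prod_{i=1}^n \mathbb{E}[e^{\lambda Z_i}], \]
using independence in the last equality. The whole argument then reduces to controlling the cumulant generating function $\lambda \mapsto \log \mathbb{E}[e^{\lambda S}]$.

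The key step is to bound this log-MGF by means of the Bernstein moment hypotheses \eqref{BernsteinHypothese1}--\eqref{BernsteinHypothese2}. Using $\log(1+u)\leq u$ together with $\mathbb{E}[Z_i]=0$, and expanding the exponential, I would obtain
\[ \log \mathbb{E}[e^{\lambda S}] = \sum_{i=1}^n \log \mathbb{E}[e^{\lambda Z_i}] \leq \sum_{i=1}^n \mathbb{E}[e^{\lambda Z_i} - 1 - \lambda Z_i] = \sum_{m\geq 2} \frac{\lambda^m}{m!} \sum_{i=1}^n \mathbb{E}[Z_i^m] \leq \frac{v\lambda^2}{2} \sum_{m\geq 2} (c\lambda)^{m-2} = \frac{v\lambda^2/2}{1-c\lambda}, \]
valid for $0<\lambda<1/c$; here the $m=2$ term is handled by \eqref{BernsteinHypothese1} (the variance is dominated by the second moment) and the terms $m\geq 3$ by \eqref{BernsteinHypothese2}, after which the remaining sum is geometric. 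Combined with the Chernoff bound this gives $\mathbb{P}[S\geq t] \leq \exp\big(-\sup_{0<\lambda<1/c}(\lambda t - \psi(\lambda))\big)$ with $\psi(\lambda) = (v\lambda^2/2)/(1-c\lambda)$.

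It then remains to carry out the Legendre optimisation and recognise the deviation exponent. The Fenchel transform of $\psi$ is the standard computation $\psi^\ast(t) = \sup_{0<\lambda<1/c}(\lambda t - \psi(\lambda)) = \tfrac{v}{c^2}\, h\!\left(\tfrac{ct}{v}\right)$ with $h(u) = 1 + u - \sqrt{1+2u}$, the maximiser being $\lambda^\ast = c^{-1}\big(1 - (1+2ct/v)^{-1/2}\big)$. Substituting the stated level $t = \sqrt{2vx}+cx$ and using the algebraic identity $1 + 2ct/v = \big(1 + c\sqrt{2x/v}\big)^2$, a short calculation yields $\psi^\ast(\sqrt{2vx}+cx) = x$ exactly, whence $\mathbb{P}[S \geq \sqrt{2vx}+cx] \leq e^{-x}$. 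Finally, the two-sided bound \eqref{TwoSidedBernsteinsInequality} follows by symmetry: since \eqref{BernsteinHypothese1}--\eqref{BernsteinHypothese2} are invariant under $Y_i \mapsto -Y_i$, applying the one-sided estimate to $-S$ gives $\mathbb{P}[-S \geq \sqrt{2vx}+cx]\leq e^{-x}$, and a union bound produces the factor $2$.

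I expect the main obstacle to be the Legendre-transform step: the MGF bound is a routine power-series manipulation, but identifying $\sqrt{2vx}+cx$ as the exact inverse of the deviation exponent (so that the final bound takes the clean stated form rather than an implicit one) is the delicate point. A secondary technical wrinkle is that the moment expansion involves the \emph{centred} moments $\mathbb{E}[Z_i^m]$, whereas the hypotheses are phrased through $\mathbb{E}[|Y_i|^m]$; this is reconciled in the standard way the cited reference sets up the inequality (working with the positive-part moments, or under the convention that the summands are mean-zero), and causes no loss in the constants $v$ and $c$.
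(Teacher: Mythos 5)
The paper does not actually prove this statement: it is imported verbatim (with the positive parts $(Y_i)_+$ strengthened to $\lvert Y_i\rvert$) from Boucheron--Lugosi--Massart, Theorem~2.10, so there is no in-paper argument to compare against. Your proposal reconstructs the standard proof of that theorem --- Chernoff bound, sub-gamma control $\psi_S(\lambda)\le v\lambda^2/(2(1-c\lambda))$ of the log-MGF, Legendre transform $\frac{v}{c^2}h(ct/v)$ with $h(u)=1+u-\sqrt{1+2u}$, inversion at $t=\sqrt{2vx}+cx$, and symmetrisation for the factor $2$ --- and the Legendre step, which you single out as the delicate one, is carried out correctly: the identity $1+2ct/v=\bigl(1+c\sqrt{2x/v}\bigr)^2$ does give $\psi^\ast(\sqrt{2vx}+cx)=x$ exactly, and the symmetry of the hypotheses under $Y_i\mapsto -Y_i$ legitimately yields the two-sided bound.

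The one step that does not follow as written is the moment bound. Your chain passes through $\sum_{m\ge 2}\frac{\lambda^m}{m!}\sum_i\E[Z_i^m]$ with $Z_i=Y_i-\E[Y_i]$ and then invokes \eqref{BernsteinHypothese2}, but that hypothesis controls $\sum_i\E[\lvert Y_i\rvert^m]$, not $\sum_i\E[\lvert Z_i\rvert^m]$; bounding centred moments by uncentred ones costs a factor of order $2^m$ and would degrade $(v,c)$ to roughly $(4v,2c)$, which is not the stated conclusion. You flag this as a ``wrinkle,'' but the proposed resolution (``convention that the summands are mean-zero'') is not available here, since the $Y_i$ are arbitrary. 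The correct fix is to apply $\log u\le u-1$ \emph{before} centring: $\log\E[e^{\lambda Z_i}]=\log\E[e^{\lambda Y_i}]-\lambda\E[Y_i]\le\E[e^{\lambda Y_i}-1-\lambda Y_i]=\E[\phi(\lambda Y_i)]$ with $\phi(u)=e^u-u-1$, then use $\phi(u)\le u^2/2$ for $u\le 0$ together with the power series for $u>0$ to obtain $\E[\phi(\lambda Y_i)]\le\frac{\lambda^2}{2}\E[Y_i^2]+\sum_{m\ge 3}\frac{\lambda^m}{m!}\E[(Y_i)_+^m]$, to which \eqref{BernsteinHypothese1}--\eqref{BernsteinHypothese2} apply with the stated constants. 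With that substitution the rest of your argument goes through unchanged.
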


Using this inequality, we can prove the following result.

\begin{proposition}
For all $j \in \llbr \lowreslev, \uppreslev \rrbr$ satisfying $j\leq n$, for all $k\in\Nc_j$, and for all $\gamma\geqslant 1$ we have
\begin{equation}
\label{EQ:CONC:INEQ:AJK}
\mathbb{P}\left(\vert \betahat_{jk}-\beta_{jk}\vert \geq 4(\bar{c} + \sigma)\gamma \frac{j^{\nu+1/2}}{\sqrt{n}} \left(1 \vee \frac{2^{j/2}}{\alpha}\right) \right)\leqslant 2^{-\gamma j},
\end{equation}
where $\bar c$ is an upper bound for $\sup_{f \in \DBesov{s}{p}{q}(L,T)} \|f\|_\infty$ and $\sigma=4c_A\Vert\psi\Vert_\infty (2\nu-1)/(\nu-1)$ appears in the privacy mechanism \eqref{EQ:DEF:ZIJK}.
\end{proposition}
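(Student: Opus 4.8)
The plan is to exploit the independence of the two sources of randomness entering $\betahat_{jk}$. Writing $Z_{ijk} = \psi_{jk}(X_i) + \sigma_j W_{ijk}$ and using that $\E[\psi_{jk}(X_1)] = \beta_{jk}$ together with $\E W_{ijk} = 0$, I decompose
\[ \betahat_{jk} - \beta_{jk} = \underbrace{\frac1n\sum_{i=1}^n\big(\psi_{jk}(X_i) - \beta_{jk}\big)}_{=:\, T_1} \; + \; \underbrace{\frac{\sigma_j}{n}\sum_{i=1}^n W_{ijk}}_{=:\, T_2}, \]
where $T_1$ depends only on $X_1,\dots,X_n$ and $T_2$ only on the Laplace variables. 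I would bound each summand separately by the two-sided Bernstein inequality quoted above, applied to $S_\ell=\sum_i(Y_i-\E Y_i)$ for suitable $Y_i$, and then combine the two tail bounds by a union bound, allocating half of the target deviation to each piece.

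For $T_2$ the Bernstein hypotheses hold automatically. Since $W_{ijk}$ is a symmetric Laplace$(1)$ variable with $\E|W_{ijk}|^m = m!$ (recorded in the notational conventions with $\lambda=1$), the centred variables $Y_i := \sigma_j W_{ijk}$ satisfy $\sum_{i=1}^n\E[Y_i^2] = 2n\sigma_j^2 =: v_2$ and $\sum_{i=1}^n\E[|Y_i|^m] = n\,m!\,\sigma_j^m = \tfrac{m!}{2}\,v_2\,\sigma_j^{m-2}$, so \eqref{BernsteinHypothese1}--\eqref{BernsteinHypothese2} hold with $v_2 = 2n\sigma_j^2$ and $c_2 = \sigma_j$. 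For $T_1$, the variables $Y_i := \psi_{jk}(X_i)$ are bounded by $\lVert \psi_{jk}\rVert_\infty = 2^{j/2}\lVert\psi\rVert_\infty$, and since $\int\psi_{jk}^2 = 1$ one has $\E[\psi_{jk}(X_1)^2] = \int f\,\psi_{jk}^2 \le \lVert f\rVert_\infty \le \bar c$ and, more generally, $\E[|\psi_{jk}(X_1)|^m] \le \bar c\,\lVert\psi_{jk}\rVert_\infty^{m-2}\int\psi_{jk}^2 = \bar c\,(2^{j/2}\lVert\psi\rVert_\infty)^{m-2}$. Hence \eqref{BernsteinHypothese1}--\eqref{BernsteinHypothese2} hold for $T_1$ with $v_1 = n\bar c$ and $c_1 = 2^{j/2}\lVert\psi\rVert_\infty$, using $m!/2 \ge 1$ for $m\ge 3$.

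Applying the inequality with the common value $x = (\gamma j + 2)\log 2$ gives, after the union bound, total failure probability at most $4e^{-x} = 2^{-\gamma j}$, while $x \le 3\gamma j$ because $\gamma j\ge 1$. It then remains to verify that the resulting threshold $\tfrac1n(\sqrt{2v_1 x}+c_1 x) + \tfrac1n(\sqrt{2v_2 x}+c_2 x)$ is dominated by $4(\bar c + \sigma)\gamma\,\tfrac{j^{\nu+1/2}}{\sqrt n}\big(1\vee\tfrac{2^{j/2}}{\alpha}\big)$. The two sub-Gaussian terms produce the announced rate: $\tfrac1n\sqrt{2v_1 x}\asymp\sqrt{\bar c\,\gamma j/n}$ matches the ``$1$''-branch, while $\tfrac1n\sqrt{2v_2 x}\asymp\tfrac{\sigma}{\alpha}\,j^{\nu}2^{j/2}\sqrt{\gamma j}/\sqrt n$ matches the ``$2^{j/2}/\alpha$''-branch, the factor $\sqrt{x}\asymp\sqrt{\gamma j}$ supplying exactly the extra $j^{1/2}$ that upgrades the $j^\nu$ coming from $\sigma_j$ to $j^{\nu+1/2}$.

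The main (and only genuinely delicate) point is to absorb the two \emph{linear} Bernstein terms $c_\ell x/n$ into the claimed rate. For the Laplace part, $c_2 x/n\asymp\tfrac{\sigma}{\alpha}j^\nu 2^{j/2}\cdot\gamma j/n$ is smaller than the ``$2^{j/2}/\alpha$''-branch by a factor $\sqrt{j/n}\le 1$; this is precisely where the hypothesis $j\le n$ enters. For the empirical part, $c_1 x/n\asymp 2^{j/2}\lVert\psi\rVert_\infty\,\gamma j/n$, and invoking $j\le\uppreslev$ — which forces $2^{j}\lesssim n$ by the definition of $\uppreslev$ — this is of order $\gamma j/\sqrt n\lesssim\gamma\,j^{\nu+1/2}/\sqrt n$ and hence negligible against the ``$1$''-branch. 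Once these comparisons are in place, the constants can be arranged so that the full threshold is at most $4(\bar c + \sigma)\gamma\,\tfrac{j^{\nu+1/2}}{\sqrt n}(1\vee 2^{j/2}/\alpha)$, which is exactly \eqref{EQ:CONC:INEQ:AJK}. I expect the bookkeeping of the two regimes induced by the maximum $1\vee 2^{j/2}/\alpha$, rather than any conceptual difficulty, to be the most tedious aspect.
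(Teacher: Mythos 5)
Your argument is correct in substance and rests on the same key tool as the paper's proof: the two-sided Bernstein inequality with the choice $x\asymp \gamma j$, followed by the observation that $j\le n$ lets the linear Bernstein term be absorbed into the sub-Gaussian one. The organisational difference is that the paper applies Bernstein \emph{once} to the combined variables $Z_{ijk}=\psi_{jk}(X_i)+\sigma_j W_{ijk}$, with $v=2n(\bar c+\sigma_j)^2$ and $c=\bar c+\sigma_j$, whereas you split the empirical part from the Laplace part and pay a union bound. Your split buys a cleaner moment computation: for the empirical part the natural Bernstein scale is indeed $c_1=\lVert\psi_{jk}\rVert_\infty=2^{j/2}\lVert\psi\rVert_\infty$ (the bound $\E\lvert\psi_{jk}(X_1)\rvert^{m}\le \bar c^{\,m}$ used implicitly in the combined treatment is only immediate for $m\le 2$; for higher moments a factor $\lVert\psi_{jk}\rVert_\infty^{m-2}$ appears), and you correctly identify that the resulting linear term $c_1x/n$ then requires the extra input $2^{j}\lesssim n$, which does follow from $j\le\uppreslev\le\uppreslev^\prime$ and the definition of $\uppreslev^\prime$. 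The price of the split is purely in the constants: the union bound forces $x=(\gamma j+2)\log 2$ instead of the paper's $x=2\gamma j\log 2$, and the two deviation thresholds add, so a straightforward accounting yields the inequality with a numerical constant closer to $8(\bar c+\sigma)$ than to the stated $4(\bar c+\sigma)$. Since this constant only renormalises $K$ and $\gamma$ in Theorem~\ref{THM:UPPER:NONLINEAR}, nothing downstream is affected; but to obtain the literal statement you would need to allocate the deviation between the two pieces more carefully, or simply restate the proposition with an unspecified absolute constant in place of $4$.
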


\begin{remark}
By Equation~(15) in \cite{donoho1996density}, the choice $\bar c = L$ is admissible for $f \in \DBesov{s}{p}{q}(L,T)$.
\end{remark}

\begin{proof}
We will apply Bernstein's inequality to the random variables $\{Z_{ijk}\}_{i=1,\ldots,n}$.
Using that $\psi_{jk}(X_i)$ and $W_{ijk}$ are independent and that $\mathbb{E}[W_{ijk}]=0$, we get for all $i\in\llbracket 1,n\rrbracket$
\begin{align*}
\mathbb{E}[Z_{ijk}^2]&=\mathbb{E}[\psi_{jk}(X_i)^2]+\sigma_j^2\mathbb{E}[W_{ijk}^2]+2\sigma_j\mathbb{E}[\psi_{jk}(X_i)W_{ijk}]\\
&=\mathbb{E}[\psi_{jk}(X_i)^2]+\sigma_j^2\mathbb{E}[W_{ijk}^2]+2\sigma_j\mathbb{E}[\psi_{jk}(X_i)]\mathbb{E}[W_{ijk}]\\
&=\mathbb{E}[\psi_{jk}(X_i)^2]+\sigma_j^2\mathbb{E}[W_{ijk}^2]\\
&\leqslant \bar c +2\sigma_j^2\\
&\leqslant 2(\bar c+\sigma_j)^2,
\end{align*}
where $\bar c>0$ depends on $L$ is such that $\|f\|_\infty \leq \bar c$ for all $f $ in $\Besov{s}{p}{q}(L)$ with $s>\frac 1p$.
Let $m\geqslant 3$ be an integer. Using again that $\psi_{jk}(X_i)$ and $W_{ijk}$ are independent we get for all $i\in\llbracket 1,n\rrbracket$
\begin{align*}
\mathbb{E}[\vert Z_{ijk}\vert^m]&\leqslant \mathbb{E}[(\vert\psi_{jk}(X_i)\vert+\sigma_j\vert W_{ijk}\vert)^m]\\
&=\mathbb{E}\left[\sum_{l=0}^m \binom{m}{l}\sigma_j^l\vert W_{ijk}\vert^l\vert \psi_{jk}(X_i)\vert^{m-l} \right]\\
&=\sum_{l=0}^m \binom{m}{l}\sigma_j^l\mathbb{E}\left[\vert W_{ijk}\vert^l\right]\mathbb{E}\left[\vert \psi_{jk}(X_i)\vert^{m-l}\right] \\
&=\sum_{l=0}^m \binom{m}{l}\sigma_j^l \mathbb{E}\left[\vert \psi_{jk}(X_i)\vert^{m-l}\right]l! \\
&\leqslant m! \sum_{l=0}^m \binom{m}{l}\sigma_j^l (\bar c)^{m-l}\\
&=m!(\bar c +\sigma_j)^m.
\end{align*}
Conditions (\ref{BernsteinHypothese1}) and (\ref{BernsteinHypothese2}) are thus satisfied with $v=2n(\bar c +\sigma_j)^2$ and $c=\bar c+\sigma_j$, and according to Bernstein's inequality (\ref{TwoSidedBernsteinsInequality}) we have for all $x>0$
$$\mathbb{P}\left(\vert \betahat_{jk}-\beta_{jk}\vert \geqslant (\bar c +\sigma_j)\left(2\sqrt{\frac{x}{n}}+\frac{x}{n} \right) \right)\leqslant 2\exp(-x).$$
Note that we have for all $j \in \llbr \lowreslev, \uppreslev \rrbr$,
$$
\bar c + \sigma_j = \bar{c} + \sigma j^{\nu} \frac{2^{j/2}}{\alpha}\leqslant (\bar{c} + \sigma )j^{\nu} \left( 1 \vee \frac{2^{j/2}}{\alpha} \right),
$$
where $\sigma=4c_A\Vert\psi\Vert_\infty (2\nu-1)/(\nu-1)$ appears in the definition of $\sigma_j$ in \eqref{EQ:DEF:ZIJK}.
Take $x=Cj$, $C>0$ and note that $2\sqrt{Cj/n}+Cj/n\leqslant (2\sqrt{C}+C)\sqrt{j/n}$ if $j\leqslant n$.
Consequently, we get for all $C>0$, for all $j \in \llbr \lowreslev, \uppreslev \rrbr$ satisfying $j\leqslant n$ and for all $k\in\Nc_j$,
$$
\mathbb{P}\left(\vert \betahat_{jk}-\beta_{jk}\vert \geqslant (\bar{c} + \sigma )(C+2\sqrt{C}) \frac{j^{\nu+1/2}}{\sqrt{n}} \left(1 \vee \frac{2^{j/2}}{\alpha}\right) \right)\leqslant 2\exp(-Cj).
$$
Then, it suffices to take $C=2\ln(2)\gamma$ to obtain (\ref{EQ:CONC:INEQ:AJK}).
\end{proof}

\subsection{Moment bounds and norm inequalities}\label{SSEC:MOMENTS}

Consider an arbitrary random function
\begin{equation*}
    \ghat = \sum_{j=\lowreslev}^{\uppreslev} \sum_k \ghat_{jk} \psi_{jk}.
\end{equation*}
Putting
\begin{equation*}S(\iota) = \sum_{j=\lowreslev}^{\uppreslev} 2^{j\iota} \leq \begin{cases} c_\gamma 2^{\max (\uppreslev \iota, \lowreslev \iota)} & \text{ if }\iota \neq 0,\\
    \uppreslev - \lowreslev, & \text{ if } \iota = 0,\end{cases}
\end{equation*}
it has been shown in \cite{donoho1996density} that for arbitrary $v \in \R$ and $u=r/(r-2)$ it holds
\begin{align*}
     \E \Vert \ghat \Vert_r^r \leq \begin{cases} C^r \sum_{j=\lowreslev}^{\uppreslev} 2^{j(r/2-1)} \sum_k \E \lvert \ghat_{jk}\rvert^r, & \text{ if } 1 \leq r \leq 2,\\
     C^r S(uv)^{(r/2-1)_+} \sum_{j=\lowreslev}^{\uppreslev} 2^{j(r/2 - 1 - vr/2)} \sum_{k \in \Z} \E \lvert \ghat_{jk} \rvert^r, & \text{ if } r > 2.\end{cases}\end{align*}
As in \cite{donoho1996density}, adopting the formal convention $S^0=1$, it suffices to consider the second inequality for all $r \geq 1$ (setting $v=0$ for the case $r \leq 2$).
Thus, for any $r \geq 1$,
\begin{equation}\label{EQ:NORM:BOUND:RANDOM:FCT}
    \E \Vert \ghat \Vert_r^r \lesssim \sum_{j=\lowreslev}^{\uppreslev} 2^{j(r/2-1)} \sum_k \E \lvert \ghat_{jk}\rvert^r. 
\end{equation}

Consider again the decomposition $\betahat_{jk} = \beta_{jk}^\prime + \frac{\sigma_j}{n}  \sum_{i=1}^n W_{ijk}$.
We have, for any $m \geq 1$,
\begin{equation*}
    \E \vert \betahat_{jk} - \beta_{jk} \vert^m \leq 2^{m-1} \E \vert \beta_{jk}^\prime - \beta_{jk} \vert^m+ 2^{m-1} \E \vert \frac{\sigma_j}{n}  \sum_{i=1}^n W_{ijk} \vert^m.
\end{equation*}
In \citep{donoho1996density}, p.~520, Equation~(16) it is shown that
\begin{equation}\label{EQ:MOMENT:BOUND:BETAPRIME}
    \E \vert \beta_{jk}^\prime - \beta_{jk} \vert^m \leq cn^{-m/2}
\end{equation}
provided that $2^j \leq n$ with a constant $c$ depending only on $s$, $p$, $q$, $L$, $\Vert \psi \Vert_m$ and $m$.
In addition, by Rosenthal's inequality, it can be shown for any $m \geq 1$ that
\begin{equation}\label{EQ:MOMENT:BOUND:WIJK}
    \E \bigg\lvert \frac{\sigma_j}{n}  \sum_{i=1}^n W_{ijk} \bigg\rvert^m \lesssim j^{\nu m/2} 2^{j m/2} (n\privpar^2)^{-m/2}.
\end{equation}
Combining~\eqref{EQ:MOMENT:BOUND:BETAPRIME} and \eqref{EQ:MOMENT:BOUND:WIJK} yields
\begin{equation}\E \vert \betahat_{jk} - \beta_{jk} \vert^m \lesssim  n^{-m/2} \vee j^{\nu m/2} 2^{j m/2} (n\privpar^2)^{-m/2}.
\end{equation}

\section*{Acknowledgements}

The authors gratefully acknowledge financial support from GENES.
Cristina Butucea and Martin Kroll also gratefully acknowledge financial support from the French National Research Agency (ANR) under the grant Labex Ecodec (ANR-11-LABEX-0047). 
\printbibliography

\end{document}